\documentclass[12pt,reqno]{amsart}
\usepackage{amscd,amssymb,amsmath,color}
  \usepackage[all]{xy}
\numberwithin{equation}{section}
\usepackage{amsfonts}
\usepackage{mathtools}
\usepackage{amsthm}
\usepackage{color}
\usepackage{xcolor}
\usepackage{hyperref}
\usepackage[pagewise]{lineno} 
\usepackage{braket}
\setlength{\textwidth}{16cm}
\setlength{\evensidemargin}{.2cm}
\setlength{\oddsidemargin}{.2cm}
\topmargin = -20pt
\textheight = 620pt

\usepackage[all]{xy}

\usepackage{todonotes}

\usepackage[capposition=top]{floatrow}

\usepackage{float}
\usepackage{todonotes}
\usepackage{blkarray}
\usepackage{pgf,tikz}
\usepackage{mathrsfs}
\usetikzlibrary{arrows}
\definecolor{cqcqcq}{rgb}{0.7529411764705882,0.7529411764705882,0.7529411764705882}
\definecolor{qqzzff}{rgb}{0.,0.6,1.}
\definecolor{wwccqq}{rgb}{0.4,0.8,0.}
\definecolor{ffqqqq}{rgb}{1.,0.,0.}
\usepackage{tikz}
\usetikzlibrary{shapes.geometric, arrows}
\input{prepictex.tex}
\input{pictex.tex}
\input{postpictex.tex}

\usepackage{todonotes}
\usepackage{pgf,tikz}

\title[\tiny{On The Classification and Description of Quantum Lens Spaces as Graph algebras}]{On The Classification and Description of Quantum Lens Spaces as Graph algebras}

\author{Thomas Gotfredsen}
\address{Roskilde Business College, Bakkesvinget 67, 4000 Roskilde, Denmark}
\email{tgo@rhs.dk, gotfredsen\_thomas@hotmail.com}

\author{Sophie Emma Zegers}
\address{Faculty of Mathematics and Physics, Charles University, Sokolovská 49/83, 186 75 Praha 8, Czech Republic}
\email{zegers@karlin.mff.cuni.cz, sophieemmazegers@gmail.com}

\date\today
\makeatletter
\@namedef{subjclassname@2020}{%
  \textup{2020} Mathematics Subject Classification}
\subjclass[2020]{46L35, 58B34, 05C30}

\keywords{Graph $C^*$-algebras, Classification,  Quantum lens spaces.}

\thanks{The work was supported by  the DFF-Research Project 2 on `Automorphisms and invariants of operator algebras', Nr. 7014--00145B. The second author was supported by the Carlsberg Foundation through an Internationalisation Fellowship.
}
\begin{document}

\begin{abstract}
We investigate quantum lens spaces, $C(L_q^{2n+1}(r;\underline{m}))$, introduced by
Brzeziński-Szymański as graph $C^*$-algebras. We give a new description of $C(L_q^{2n+1}(r;\underline{m}))$ as graph $C^*$-algebras amending an error in the original paper by Brzeziński-Szymański.
Furthermore, for $n\leq 3$, we give a number-theoretic invariant, when all but one weight are coprime to the order of the acting group $r$. This builds upon the work of Eilers, Restorff, Ruiz and Sørensen.
\end{abstract}

\theoremstyle{plain}
\newtheorem{theorem}{Theorem}[section]
\newtheorem{corollary}[theorem]{Corollary}
\newtheorem{lemma}[theorem]{Lemma}
\newtheorem{proposition}[theorem]{Proposition}
\newtheorem{conjecture}[theorem]{Conjecture}
\newtheorem{commento}[theorem]{Comment}
\newtheorem{problem}[theorem]{Problem}
\newtheorem{remarks}[theorem]{Remarks}
\newtheorem{notation}[theorem]{Notation}

\theoremstyle{definition}
\newtheorem{example}[theorem]{Example}

\newtheorem{definition}[theorem]{Definition}

\newtheorem{remark}[theorem]{Remark}

\newcommand{\Nb}{{\mathbb{N}}}
\newcommand{\Rb}{{\mathbb{R}}}
\newcommand{\Tb}{{\mathbb{T}}}
\newcommand{\Zb}{{\mathbb{Z}}}
\newcommand{\Cb}{{\mathbb{C}}}

\newcommand{\Af}{\mathfrak A}
\newcommand{\Bf}{\mathfrak B}
\newcommand{\Ef}{\mathfrak E}
\newcommand{\Gf}{\mathfrak G}
\newcommand{\Hf}{\mathfrak H}
\newcommand{\Kf}{\mathfrak K}
\newcommand{\Lf}{\mathfrak L}
\newcommand{\Mf}{\mathfrak M}
\newcommand{\Rf}{\mathfrak R}

\newcommand{\x}{\mathfrak x}
\def\C{\mathbb C}
\def\N{\mathbb N}
\def\R{\mathbb R}
\def\T{\mathbb T}
\def\Z{\mathbb Z}

\def\A{{\mathcal A}}
\def\B{{\mathcal B}}
\def\D{{\mathcal D}}
\def\E{{\mathcal E}}
\def\F{{\mathcal F}}
\def\G{{\mathcal G}}
\def\H{{\mathcal H}}
\def\J{{\mathcal J}}
\def\K{{\mathcal K}}
\def\LL{{\mathcal L}}
\def\N{{\mathcal N}}
\def\M{{\mathcal M}}
\def\N{{\mathcal N}}
\def\OO{{\mathcal O}}
\def\P{{\mathcal P}}
\def\Q{{\mathcal Q}}
\def\SS{{\mathcal S}}
\def\T{{\mathcal T}}
\def\U{{\mathcal U}}
\def\W{{\mathcal W}}

\def\ext{\operatorname{Ext}}
\def\span{\operatorname{span}}
\def\clsp{\overline{\operatorname{span}}}
\def\Ad{\operatorname{Ad}}
\def\ad{\operatorname{Ad}}
\def\tr{\operatorname{tr}}
\def\id{\operatorname{id}}
\def\en{\operatorname{End}}
\def\aut{\operatorname{Aut}}
\def\out{\operatorname{Out}}
\def\coker{\operatorname{coker}}
\def\pol{\mathcal{O}}

\newcommand{\SL}{{\text{SL}}}

\def\la{\langle}
\def\ra{\rangle}
\def\rh{\rightharpoonup}
\def\cl{\textcolor{blue}{$\clubsuit$}}

\def\bst{\textcolor{blue}{$\bigstar$}}

\newcommand{\norm}[1]{\left\lVert#1\right\rVert}
\newcommand{\inpro}[2]{\left\langle#1,#2\right\rangle}
\newcommand{\Mod}[1]{\ (\mathrm{mod}\ #1)}
\renewcommand{\thefootnote}{\alph{footnote}}
\renewcommand{\bibname}{References}

\maketitle
\section{Introduction}
In the study of noncommutative geometry many classical spaces have been given a quantum analogue. Due to Gelfand duality there exists an equivalence between the categories of commutative $C^*$-algebras and locally compact Hausdorff spaces. Hence, when studying quantum analogues of classical spaces one often thinks of them as algebras of continuous functions on a non-existing virtual space. 

A well-studied example is the quantum sphere by Vaksman and Soibelman \cite{vs}, from which we define quantum lens spaces as fixed point algebras under the action of finite cyclic groups. In noncommutative geometry quantum lens spaces are objects of increasing interest, see e.g. \cite{bf, abl, dl} where noncommutative line bundles with quantum lens spaces as total spaces are investigated. 

In \cite{hs} Hong and Szymański gave a description of quantum lens spaces as graph $C^*$-algebras. This description was extended in \cite{BS} by Brzeziński and Szymański to also include weights which are not necessarily coprime with the order of the acting finite cyclic group. Unfortunately the general description is incorrect, which was recently pointed out by Efren Ruiz. 

In the present paper we first describe a new graph which is a modified version of the one given by Brzeziński and Szymański and prove that quantum lens spaces are indeed graph $C^*$-algebras. Then we deal with classification of quantum lens spaces of dimension at most 7, with certain conditions on their weights. We remark that the work on classification has already been presented in an earlier preprint, unpublished, available on arXiv, \cite{GM21}, by the present authors. After the submission of the preprint to arXiv it was pointed out the authors by Efren Ruiz that the graph $C^*$-algebraic description of quantum lens spaces is incorrect in some cases. This affects to some extent the classification results presented in the first preprint. The present paper serves as an extension of the previous one, containing both the modified graph $C^*$-algebraic description and the adjusted classification results.

For the determination of isomorphism of quantum lens spaces, it is not sufficient only considering their K-groups and the order, \cite[Remark 7.10]{errs}. In \cite{errs} Eilers, Restorff, Ruiz and Sørensen came with an important classification result of finite graph $C^*$-algebras using the reduced filtered K-theory. As opposed to the classification of Cuntz-Krieger algebras given by Restorff in \cite{r}, which the result in \cite{errs} is based on, quantum lens spaces fall within the scope of this classification. As an application of the classification result, Eilers, Restorff, Ruiz and Sørensen investigated 7-dimensional quantum lens spaces for which all the weights are coprime with the order of the acting cyclic group $\Zb_r$. They managed to reduce the classification result to elementary matrix algebras using $SL_{\mathcal{P}}$-equivalence and to prove that the lowest dimension for which we get different quantum lens spaces is dimension $7$. Here they showed that there exist two different quantum lens spaces when $r$ is a multiple of 3, and precisely one when this is not the case. 

Further investigation of quantum lens spaces, as defined in \cite{hs}, was conducted in \cite{jkr} by Jensen, Klausen and Rasmussen using $SL_{\mathcal{P}}$-equivalence. For a fixed $r$ they showed how large the dimension of the quantum lens space $C(L_q^{2n+1}(r;m_0,...m_{n}))$ must be to obtain non-isomorphic quantum lens spaces. The work is based on computer experiments by Eilers, who came up with a suggestion for a number $s$ such that for $n<s$ the quantum lens spaces are all isomorphic.

In this paper we will extend the result by Eilers, Restorff, Ruiz and Sørensen to quantum lens spaces of dimension less than or equal to $7$ for which $\gcd(m_i,r)\neq 1$ for one and only one $i$. The work builds on computer experiments, which were made in collaboration with Søren Eilers. We use a program written by Eilers in Maple 2019\footnote{Maplesoft, a division of Waterloo Maple Inc., Waterloo, Ontario.}, which has been optimised slightly by the present authors. Concretely, the program computes the adjacency matrices and isomorphism classes given the order $r$ and the set $\lbrace \gcd(m_i,r)\colon i=0,\dots 3 \rbrace$. By considering various combinations of the values of $r$ and the weights, we came up with a suggestion for an invariant, depending on which weight that is not coprime with the order of the acting group. In this way, experiments have played a crucial role in determining the statement of the presented theorems.

The structure of the paper is as follows: In section \ref{section:preliminaries} we present the classification result by Eilers, Restorff, Ruiz and Sørensen in the case of type I graph $C^*$-algebras. Section
\ref{section:QLgraphs} contains a counterexample proving that the description of quantum lens spaces as graph $C^*$-algebras by Brzeziński and Szymański is incorrect. Moreover it contains a new proof that quantum lens spaces are indeed graph $C^*$-algebras. We emphasise that the content in this section has already been presented in the same format in \cite[Chapter 2]{M21}. In section \ref{section:classificationQLS} we describe the classification result by Eilers, Restorff, Ruiz and Sørensen in the setting of quantum lens spaces and present the classification for quantum lens spaces for which all the weights are copime to the order of the acting group. 

The procedure to classify quantum lens spaces follows by first constructing the adjacency matrices, which are presented in section \ref{adjacencymatrix}. Afterwards we calculate an invariant using $SL_{\mathcal{P}}$-equivalence which involves some long calculations. Therefore the proofs are postponed to section \ref{invariant}, and the main theorems (Theorem \ref{Thm:Main} \& \ref{5dimensional}) are stated in section \ref{classification}.
\\ 
\\
\textbf{Acknowledgements} The authors would like to thank Søren Eilers for helpful discussions as well as providing a program which has been crucial to the investigation. We would, furthermore, like to thank Efren Ruiz for pointing out the problem in the description of quantum lens spaces as graph $C^*$-algebras and for his important guidance and help in constructing a new description. The authors also gratefully acknowledge helpful comments and suggestions from Wojciech Szymański and James Gabe. We also thank the anonymous referee for comments and suggestions for improvements.

\section{Preliminaries}\label{section:preliminaries}
We recall first some concepts of graph $C^*$-algebras which are needed in this paper. A directed graph $E=(E^0,E^1,r,s)$ consists of a countable set $E^0$ of \textit{vertices}, a countable set $E^1$ of \textit{edges} and two maps $r,s: E^1\to E^0$ called the \textit{range map} and the \textit{source map} respectively. For an edge $e\in E^1$ from $v$ to $w$ we have $s(e)=v$ and $r(e)=w$. 
For a directed graph graph $E$, we let $A_E=[A(v,w)]_{v,w\in E^0}$ where $A(v,w)$ is the number of edges with source $v$ and range $w$. $A_E$ is called the adjacency matrix for $E$. Moreover we let $B_E:=A_E-I$.

A graph is called \textit{finite} if it has finitely many edges and vertices. For a directed graph $E$, we recall that a vertex is \textit{regular} if $s^{-1}(v)=\{e\in E^1| \ s(e)=v\}$ is finite and nonempty, it is called \textit{singular} if this is not the case. In the case were we have too many edges between two vertices to make a good drawing, we only draw one edge and indicate the number of edges as follows: $\bullet \overset{(m)}{\longrightarrow} \bullet$ if the number of edges is $m$. 

A \textit{path} $\alpha$ in a graph is a finite sequence $\alpha=e_1e_2\cdots e_n$ of edges satisfying $r(e_i)=s(e_{i+1})$ for $i=1,...,n-1$. A path $\alpha$ is called a \textit{cycle} if $s(\alpha)=r(\alpha)$ and a loop if $\alpha$ is a cycle of length one. It is called a \textit{return path} if $\alpha$ is a cycle and $r(e_i)\neq r(\alpha)$ for $i<n$. 

Let $v,w\in E^0$, if there is a path from $v$ to $w$ in the graph then we write $v\geq w$. A subset $H\in E^0$ is called \textit{hereditary} if $v\in H$ and $w\in E^0$ is such that $v\geq w$ then $w\in H$. 
\\

The graph $C^*$-algebra of a directed graph is defined as follows (see e.g. \cite{bpis,flr}). 
\begin{definition}\label{graphalgebra}
Let $E=(E^0,E^1,r,s)$ be a directed graph. The graph $C^*$-algebra $C^*(E)$
is the universal $C^*$-algebra generated by families of orthogonal projections $\{p_v | \ v\in E^0\}$ and partial isometries $\{s_e | \ e\in E^1\}$ with mutually orthogonal ranges (i.e. $s_e^*s_f=0, e\neq f$) 
subject to the relations 
\begin{itemize}
\item[] (CK1) $s_e^*s_e=p_{r(e)}$
\item[] (CK2) $s_es_e^*\leq p_{s(e)}$
\item[] (CK3) $p_v=\underset{s(e)=v}{\sum}s_e s_e^*$, if $\{e\in E^1| \ s(e)=v\}$ is finite and nonempty. 
\end{itemize}
\end{definition}
For at path $\alpha=e_1e_2\cdots e_n$ we let $s_\alpha=s_{e_1}s_{e_2}\cdots s_{e_n}$.

We can by universality define a circle action, called the \textit{gauge action}, $\gamma: U(1)\to \text{Aut}(C^*(E))$ for which 
$\gamma_z(p_v)=p_v \ \text{and} \ \gamma_z(s_e)=zs_e$
for all $v\in E^0, e\in E^1$ and $z\in U(1)$.

If the graph has finitely many vertices, we say that a nonempty subset $S\subseteq E^0$ is \textit{strongly connected} if for any pair of vertices $v,w\in S$ there exists a path from $v$ to $w$. It is called a \textit{strongly connected component} if it is a maximal strongly connected subset. We let $\Gamma_E$ be the set of all strongly connected components and all singletons of singular vertices which are not the base of a cycle. Moreover, a strongly connected component is called a \textit{cyclic component} if one of its vertices has exactly one return path. 

Let Prime$_\gamma(C^*(E))$ be the set of all proper ideals of $C^*(E)$ which are prime and gauge invariant. For at finite graph $E$ it follows by \cite[Lemma 3.16]{errs} that there exists a homeomorphism $\nu_E: \Gamma_E\to$ Prime$_\gamma(C^*(E))$  such that $\gamma_1\geq \gamma_2$ if and only if  $\nu_E(\gamma_1)\supseteq \nu_E(\gamma_2)$ for $\gamma_1,\gamma_2\in \Gamma_E$.

The structure of Prime$_\gamma(C^*(E))$, and hence of $\Gamma_E$, will become crucial in the application of $SL_{\mathcal{P}}$-equivalence. 

\subsection{Classification of graph $C^*$-algebras over finite graphs}
In this section we briefly describe the classification result of finite graphs by Eilers, Restorff, Ruiz and Sørensen in \cite{errs}. We restrict to the case of type I/postliminal $C^*$-algebras. By \cite[Lemma 4.20]{errs} and \cite{DHS03}, a graph $C^*$-algebra $C^*(E)$ is of type I if and only if no vertices support two distinct return paths. 

In \cite[Theorem 6.1]{errs} finite graphs are classified up to stable isomorphism by their \textit{ordered reduced filtered K-theory}, in which the main idea is to consider the K-theory of specific ideals, the corresponding quotients and the maps between them. We will not describe this further. Instead, we consider type I graph $C^*$-algebras for which the classification result is reduced to a question of $\SL_{\mathcal{P}}$-equivalence as presented in \cite{errs,errs2}. $\SL_{\mathcal{P}}$-equivalence boils down to elementary matrix algebras which makes it a very useful tool in applications. 
\\

We describe the definition of $\SL_{\mathcal{P}}$-equivalence, for this we let $\boldsymbol{n}=(n_i)_{i=1}^N,\boldsymbol{m}=(m_i)_{i=1}^N\in \Nb^N$ be multiindices and $|\boldsymbol{n}|=n_1+\cdots+n_N$. We denote by $\boldsymbol{1}$ the multiindex with $1$ on every entry. 
\begin{definition}
Let $\mathcal{P}=\{1,2,3,...N\}$ with $N\in\Nb$ be a partially ordered set with order denoted $\preceq$.
Let $\boldsymbol{m},\boldsymbol{n}\in \Nb^N$ be multiindices such that $|\boldsymbol{m}|>0$ and $|\boldsymbol{n}|>0$. Then $\mathfrak{M}_{\mathcal{P}}(\boldsymbol{m}\times\boldsymbol{n},\Z)$ is the set of block matrices
$$
B=\begin{pmatrix}
B\{1,1\} & \cdots & B\{1,N\} \\
\vdots & & \vdots \\
B\{N,1\} & \cdots & B\{N,N\}
\end{pmatrix}
$$
for which 
\begin{equation}\label{blockmatrix}
B\{i,j\}\neq 0 \Rightarrow i\preceq j,
\end{equation}
where $B\{i,j\}\in M(m_i\times n_i,\Zb)$. If $m_i=n_i=0$ then $B\{i,j\}$ is the empty matrix. Moreover, we denote $B\{i,i\}$ by $B\{i\}$. Note that condition \eqref{blockmatrix} implies that the matrices in $\mathfrak{M}_{\mathcal{P}}(\boldsymbol{m}\times\boldsymbol{n},\Z)$ are upper triangular block matrices. 
\end{definition}

Let $\mathfrak{M}_{\mathcal{P}}(\boldsymbol{n},\Z)$ denote $\mathfrak{M}_{\mathcal{P}}(\boldsymbol{n}\times\boldsymbol{n},\Z)$. 
We define $\SL_{\mathcal{P}}(\boldsymbol{n},\Z)$ to be the matrices in $\mathfrak{M}_{\mathcal{P}}(\boldsymbol{n},\Z)$ such that all the non-empty diagonal blocks have determinant $ 1$. 

\begin{definition} Let $A,B\in \mathfrak{M}_{\mathcal{P}}(\boldsymbol{m}\times\boldsymbol{n},\Z)$, we say that $A$ and $B$ are $\SL_{\mathcal{P}}$-equivalent if there exist $U\in \SL_{\mathcal{P}}(\boldsymbol{m},\Z)$ and $V\in \SL_{\mathcal{P}}(\boldsymbol{n},\Zb)$ such that $UAV=B$. 
\end{definition}

The block structure of $B_E$ for a finite graph $E$ is given by the conditions in \cite[Definition 4.15]{errs}. Here, a partial ordered set $\mathcal{P}$ is defined such that there is an order reversing isomorphism from $\mathcal{P}$ to $\Gamma_E$ and hence encodes the ideal structure. 

Let $E$ be a finite graph which has no vertices supporting two distinct return paths. From \cite[Definition 4.15]{errs} and the following remark it follows that $B_E$ can be assumed to have a $1\times 1$ block structure i.e. $B_E\in \mathfrak{M}^\circ_{\mathcal{P}}(\boldsymbol{1},\Z)$ (see \cite[Definition 4.15]{errs}). $SL_{\mathcal{P}}(\mathbf{1},\Z)$ is in this case given as the set of upper triangular matrices, $A=(a_{ij})$, with $1$ on the diagonal and which satisfies: 
$
a_{ij}\neq 0 \Rightarrow i\preceq j.
$

$SL_{\mathcal{P}}$-equivalence simplifies in this case, since the block structure consists of $1\times 1$ matrices. Hence working with $SL_{\mathcal{P}}$-equivalence becomes a linear problem. Note that $SL_{\mathcal{P}}(\mathbf{1},\Z)$ is a group under matrix multiplication. 
\\
Let ${E}_{\curlywedge}$ be the graph which is obtained from $E$ by adding a loop to all sinks in $E$. For two finite graphs $E$ and $F$ we say that $(B_E,B_F)$ is in standard form if the adjacency matrices for $E$ and $F$ have the same size and block structure, moreover they must also have the same temperatures i.e. the same types of gauge simple subquotients, see \cite[Definition. 4.22]{errs} for a precise definition. 
The partial ordered set $\mathcal{P}$ is defined such that there is an order reversing isomorphism from $\mathcal{P}$ to $\Gamma_E$ and $\Gamma_F$. 

Type I graph $C^*$-algebras are classified by the following result: 
\begin{theorem}[{\cite[Theorem 7.1]{errs}\cite[Proposition 14.8]{errs2}}]\label{Theorem:Classification}
Let $E$ and $F$ be finite graphs which have no vertices supporting two distinct return paths. If $(B_E,B_F)$ is in standard form, then $C^*(E)$ and $C^*(F)$ are isomorphic if and only if there exist matrices $U,V\in SL_{\mathcal{P}}(\mathbf{1},\Z)$ such that $U{B_E}_{\curlywedge}V={B_F}_{\curlywedge}$. 
\end{theorem}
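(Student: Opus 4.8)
Since the statement is quoted verbatim from the literature, the plan is not to reprove it from scratch but to assemble it from the two cited inputs and to indicate the conceptual pipeline connecting them. The natural starting point is the master classification \cite[Theorem 6.1]{errs}, which asserts that two finite graphs have stably isomorphic $C^*$-algebras precisely when their ordered reduced filtered K-theories are isomorphic. First I would invoke \cite[Lemma 4.20]{errs} together with \cite{DHS03} to pass to the type I regime: the hypothesis that no vertex supports two distinct return paths is exactly the condition guaranteeing postliminality, so that every gauge-simple subquotient is elementary---a full matrix algebra $M_k$, a circle algebra $M_k(C(\mathbb{T}))$, or, after stabilising, their compact analogues. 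This is what forces the block structure of $B_E$ to be the trivial $1\times 1$ structure recorded above, namely $B_E\in\mathfrak{M}^\circ_{\mathcal{P}}(\boldsymbol{1},\Z)$, with the partial order $\mathcal{P}$ encoding the ideal lattice via the homeomorphism $\nu_E$.

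The core step is to replace the abstract K-theoretic invariant by the concrete integer matrix $B_E$ and to identify the equivalence relation it carries; this is the content of \cite[Theorem 7.1]{errs}. Conceptually, because every simple subquotient is elementary, the ordered reduced filtered K-theory collapses to the finite diagram of $K_0$- and $K_1$-groups of the ideals and subquotients, and all the connecting maps together with the order are recorded by the single matrix $B_E$ with its $\mathcal{P}$-compatible block pattern. I would then argue that an isomorphism of two such diagrams is realised by, and conversely induces, a pair of transformations $U B_E V$ with $U,V$ ranging over matrices that preserve the block-upper-triangular pattern dictated by $\mathcal{P}$ and act trivially on each diagonal block up to determinant $1$; these are exactly the elements of $\SL_{\mathcal{P}}(\boldsymbol{1},\Z)$. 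The determinant-exactly-$1$ constraint (rather than $\pm 1$ or general invertibility) is what encodes order- and orientation-preservation of the K-theoretic maps at each gauge-simple piece.

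Finally I would upgrade stable isomorphism to genuine isomorphism. Since both graphs are finite, $C^*(E)$ and $C^*(F)$ are unital, and the passage from the stable statement of \cite[Theorem 6.1]{errs} to the unital statement is precisely \cite[Proposition 14.8]{errs2}. The role of the $\curlywedge$ modification---adjoining a loop at each sink---is a bookkeeping device that turns each sink into the base of a return path, so that ${B_E}_{\curlywedge}$ is the matrix whose $\SL_{\mathcal{P}}$-equivalence class tracks the unital rather than the merely stable (Morita) class; combining this normalisation with the linear reduction of the previous paragraph yields the stated biconditional.

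The step I expect to be the genuine obstacle, were one to carry this out in full rather than cite it, is the middle one: proving that an abstract isomorphism of the ordered reduced filtered K-theory can always be lifted to an honest $\SL_{\mathcal{P}}$-equivalence of the defining matrices, and in particular pinning down why the diagonal blocks must have determinant exactly $1$. Verifying that the block-triangular form is both necessary (from the order structure on $\Gamma_E$) and sufficient, and that no information is lost in passing from the K-theory diagram back to $B_E$, is the technical heart of \cite[Theorem 7.1]{errs}; the type I hypothesis is what makes this tractable, since it removes the purely infinite simple subquotients that would otherwise require the full Kirchberg--Phillips-type machinery.
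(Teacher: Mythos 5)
The paper offers no proof of this statement at all---it is quoted directly from \cite[Theorem 7.1]{errs} and \cite[Proposition 14.8]{errs2}---and your assembly of it from the master classification \cite[Theorem 6.1]{errs}, the type I characterisation via \cite[Lemma 4.20]{errs} and \cite{DHS03}, the reduction to $1\times 1$ block structure, and the unital refinement encoded by the $\curlywedge$-modification mirrors exactly the framing the paper itself gives in its preliminaries section. Your proposal is therefore correct as a citation-assembly and takes essentially the same route as the paper, with your closing remark rightly identifying the lifting of filtered K-theory isomorphisms to $\SL_{\mathcal{P}}$-equivalences as the technical content that lives entirely inside the cited works.
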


\section{Quantum Lens spaces as graph $C^*$-algebras}\label{section:QLgraphs}
The quantum $(2n+1)$-sphere by Vaksman and Soibelman, denoted $C(S_q^{2n+1})$, is the universal $C^*$-algebra generated by $z_0,z_1,...,z_n$ with the following relations: 
\[
\begin{aligned}
&z_jz_i=qz_iz_j, \;\;\; \text{for} \; i<j, \ \ z_iz_j^*=qz_j^*z_i, \;\;\; \text{for} \; i\neq j, \\
&z_i^*z_i=z_iz_i^*+(1-q^2)\sum_{j=i+1}^n z_jz_j^*, \; \text{for} \ i=0,...,n, \\ 
&\sum_{j=0}^n z_jz_j^*=1,
\end{aligned} 
\]
where $q\in (0,1)$, see \cite{vs}. It was shown in \cite{HS02} that $C(S_q^{2n+1})\cong C^*(L_{2n+1})$ where the graph $L_{2n+1}$ has vertices $v_i, i=0,...,n$ with edges $e_{ij}, 0\leq i\leq j\leq n$ and $s(e_{ij})=v_i, r(e_{ij})=v_j$. 

Let $\underline{m}=(m_0,m_1,...,m_n)$ be a sequence of positive integers. The $C^*$-algebra $C(S_q^{2n+1})$ admits, by universality, an action of $\Z_r$ for any $r\in\mathbb{N}$, given by 
\[
\varrho_{\underline{m}}^r: z_i\mapsto \theta^{m_i}z_i,
\]
where $\theta$ is a generator of $\Z_r$. The quantum lens space $C(L_q^{2n+1}(r;\underline{m}))$ is defined as the fixed point algebra of $C(S_q^{2n+1})$ under this action.

The action $\varrho_{\underline{m}}^r$ on $C(S_q^{2n+1})$ translates under the isomorphism with $C^*(L_{2n+1})$, to the following action:
$$
s_{e_{ij}}\mapsto \theta^{m_i}s_{e_{ij}}, \ \ p_{v_i}\mapsto p_{v_i}. 
$$
We also denote this action by $\varrho_{\underline{m}}^r$. It then follows by \cite[Theorem 4.6]{C08} that 
\begin{equation}\label{corner}
C(L_q^{2n+1}(r;\underline{m}))\cong C(L_{2n+1})^{\varrho_{\underline{m}}^r}\cong \left(\sum_{i=0}^n p_{(v_i,0)}\right)C^*(L_{2n+1}\times_{
c}\mathbb{Z}_r)\left(\sum_{i=0}^n p_{(v_i,0)}\right). 
\end{equation}

The graph $L_{2n+1}\times_{
c}\mathbb{Z}_r$, called the \textit{skew product graph} labelled by $c: e_{ij}\to m_i \Mod{r}$, has vertices $(v_i,k), i=0,...,n, k=0,...,r-1$ and edges $(e_{ij}, k), i,j=0,...,n, i\leq j, k=0,...,r-1$. The source and range maps are given as follows: 
$$
s((e_{ij}, k))=(v_i,k-m_i \Mod{r}), \ \ r((e_{ij},k))=(v_j,k). 
$$

In \cite[Theorem 2.2]{BS}, it is stated that $C(L_q^{2n+1}(r;\underline{m}))$ is isomorphic to the graph $C^*$-algebra $C^*(L_{2n+1}^{r;\underline{m}})$. To define the graph $L_{2n+1}^{r;\underline{m}}$ we need the notion of an admissible path. 

\begin{definition}[\cite{BS}]
A path from $(v_i,s)$ to $(v_j,t)$ in $L_{2n+1}\times_{c}\Z_r$ is called \textit{admissible} if it does not pass through any $(v_\ell,k)$ for which $\ell=i,...,j$ and $k=0,...,\gcd(m_\ell,r)-1$. 
\end{definition}

\begin{remark}
Comparing with the notion of 0-simple paths from \cite[Definition 7.4]{errs}, it is clear that the 0-simple paths are exactly the admissible paths when all weights are coprime to the order of the acting group.
\end{remark}
\begin{definition}[\cite{BS}]
The graph $L_{2n+1}^{r;\underline{m}}$ has vertices $v_i^b,i=0,...,n, b=0,...,\gcd(m_i,r)-1$ and edges $e_{ij;a}^{st}$, $a=1,...,n_{ij}^{st}$ where 
$$n_{ij}^{st}=\text{the number of admissible paths from $(v_i,s$) to $(v_j,t)$}.$$

The source and range maps are given by
$$
s(e_{ij;a}^{st})=v_i^s, \ \ r(e_{ij;a}^{st})=v_j^t.
$$
\end{definition}
The following example, which was pointed out by Efren Ruiz, shows that it is not in general true that $C(L_q^{2n+1}(r;\underline{m}))$ is isomorphic to the graph $C^*$-algebra $C^*(L_{2n+1}^{r;\underline{m}})$ as stated in \cite[Theorem 2.2]{BS}. 

\begin{example}[Counterexample of {\cite[Theorem 2.2]{BS}}]\label{counterexample} Let $n=1, r=4$ and $\underline{m}=(2,1)$ then the skew product graph consists of two levels which both consists of four vertices, the first level consist of two cycles as follows:
\begin{center}
\begin{tikzpicture}[scale=1.3]
\filldraw [black] (0,0) circle (1pt);
\filldraw [black] (2,0) circle (1pt);
\filldraw [black] (4,0) circle (1pt);
\filldraw [black] (6,0) circle (1pt);

\draw[->] (3.9,-0.1) to [out=-160,in=-20] (0.1,-0.1);
\draw[->] (0.1,0.1) to [out=20,in=160] (3.9,0.1);
\draw[->] (5.9,-0.1) to [out=-160,in=-20] (2.1,-0.1);
\draw[->] (2.1,0.1) to [out=20,in=160] (5.9,0.1);

\node at (-1,0.5)  {$L_3\times_c\Z_4$};
\node at (-0.5, 0)  {$(v_0,0)$};
\node at (1.3, 0)  {$(v_0,1)$};
\node at (3.3, 0)  {$(v_0,2)$};
\node at (5.3, 0)  {$(v_0,3)$};

\filldraw [black] (0,-2) circle (1pt);
\filldraw [black] (2,-2) circle (1pt);
\filldraw [black] (4,-2) circle (1pt);
\filldraw [black] (6,-2) circle (1pt);

\draw[->] (0.1,-1.9) to [out=20,in=160] (1.9,-1.9);
\draw[->] (2.1,-1.9) to [out=20,in=160] (3.9,-1.9);
\draw[->] (4.1,-1.9) to [out=20,in=160] (5.9,-1.9);
\draw[->] (5.9,-2.1) to [out=-160,in=-20] (0.1,-2.1);

\draw[->] (0.1,-0.1) to (3.99,-1.9);
\draw[->] (2.1,-0.1) to (5.99,-1.9);
\draw [->] (4,-0.1) to (0,-1.9);
\draw[->] (6,-0.1) to (2,-1.9);

\node at (-0.5, -2)  {$(v_1,0)$};
\node at (1.3, -2)  {$(v_1,1)$};
\node at (3.3, -2)  {$(v_1,2)$};
\node at (5.3, -2)  {$(v_1,3)$};

\end{tikzpicture}
\end{center}
We have 
$$
\begin{aligned}
C(L_q^3(4;(2,1)))&\cong (p_{(v_0,0)}+p_{(v_1,0)})C^*(L_3\times_c\Z_4)(p_{(v_0,0)}+p_{(v_1,0)}) \\
&= \overline{\span}\{s_{\mu}s_{\nu}^*| \ r(\mu)=r(\nu), s(\mu),s(\nu)\in \{(v_0,0),(v_1,0)\}\} \\
&=(p_{(v_0,0)}+p_{(v_1,0)})C^*(G)(p_{(v_0,0)}+p_{(v_1,0)})
\end{aligned}
$$
where $G$ is the subgraph of $L_3\times_c\Z_4$ for which 
$$
\begin{aligned}
G^0=\{(v_0,i),(v_1,j)| \ i=0,2, j=0,1,2,3\}, \ \ G^1=s^{-1}(E^0)\cap r^{-1}(E^0). 
\end{aligned}
$$
The range and source maps are the ones from $L_3\times_c\Z_4$ restricted to $G$. The graph $G$ is defined as above, since we have no paths from $(v_0,0)$ or $(v_1,0)$ to $(v_0,1)$ and $(v_0,3)$. Hence, we can remove the vertices $(v_0,i), i=1,3$ and their outgoing edges. Note that $G^0$ is the smallest hereditary subset of $(L_3\times_c \Zb_4)^0$ which contains $(v_0,0)$ and $(v_1,0)$. 

Since $G$ is a Cuntz-Krieger algebra it follows by \cite[Corollary 4.10]{AE15} that the corner is indeed a Cuntz-Krieger algebra, hence a graph $C^*$-algebra. 

The projection $p_{(v_0,0)}+p_{(v_1,0)}$ is full in $C^*(G)$. Indeed, let $I$ be the ideal generated by  $p_{(v_0,0)}+p_{(v_1,0)}$. By the Cuntz-Krieger relations we have 
$$
p_{(v_0,2)}=s_{(e_{00},2)}^*s_{(e_{00},2)}, \ \ s_{(e_{00},2)}s_{(e_{00},2)}^*\leq p_{(v_0,0)}.
$$
Hence 
$$s_{(e_{00},2)}^*=s_{(e_{00},2)}^*s_{(e_{00},2)}s_{(e_{00},2)}^*=s_{(e_{00},2)}^*s_{(e_{00},2)}s_{(e_{00},2)}^*p_{v_0,0}\in I. 
$$
Then $p_{(v_0,2)}\in I$. Similarly we can show that $p_{(v_1,1)}\in I$ using that $p_{(v_1,0)}\in I$ and so on. We obtain that $p_w\in I$ for all $w\in G^0$, hence $I=C^*(G)$ and $p_{(v_0,0)}+p_{(v_1,0)}$ is a full projection. By \cite[Corollary 2.6]{B77} $(p_{(v_0,0)}+p_{(v_1,0)})C^*(G)(p_{(v_0,0)}+p_{(v_1,0)})$ is stably isomorphic to $C^*(G)$. 

We apply the collapse move defined in \cite{S13} a number of times until we obtain a finite graph with no sinks and sources such that every vertex is the base of at least one loop. The graphs below indicate how to obtain such a graph, the vertex indicated with $\ast$ is the one we collapse in each step. The graph we obtain in the last step is denoted by $E$. 

\begin{center}
\begin{tikzpicture}[scale=0.7]
\filldraw [black] (0,0) circle (1pt);
\node at (4,0) {\textcolor{blue}{$\ast$}}; 

\draw[->] (3.9,-0.1) to [out=-160,in=-20] (0.1,-0.1);
\draw[->] (0.1,0.1) to [out=20,in=160] (3.9,0.1);

\node at (-1,0.5)  {$G$};

\filldraw [black] (0,-2) circle (1pt);
\filldraw [black] (2,-2) circle (1pt);
\filldraw [black] (4,-2) circle (1pt);
\filldraw [black] (6,-2) circle (1pt);

\draw[->] (0.1,-1.9) to [out=20,in=160] (1.9,-1.9);
\draw[->] (2.1,-1.9) to [out=20,in=160] (3.9,-1.9);
\draw[->] (4.1,-1.9) to [out=20,in=160] (5.9,-1.9);
\draw[->] (5.9,-2.1) to [out=-160,in=-20] (0.1,-2.1);

\draw[->] (0.1,-0.1) to (3.99,-1.9);
\draw[->] (4,-0.1) to (0,-1.9);

\draw[->] (6,-1) to (7,-1);
\end{tikzpicture}
\begin{tikzpicture}[scale=0.7]
\filldraw [black] (0,0) circle (1pt);
\draw (0,0.4) circle [radius=0.4cm];
\draw [->] (-0.01,0.8) -- (0.01,0.8);

\filldraw [black] (0,-2) circle (1pt);
\filldraw [black] (2,-2) circle (1pt);
\filldraw [black] (4,-2) circle (1pt);
\node at (6,-2) {\textcolor{blue}{$\ast$}};

\draw[->] (0.1,-1.9) to [out=20,in=160] (1.9,-1.9);
\draw[->] (2.1,-1.9) to [out=20,in=160] (3.9,-1.9);
\draw[->] (4.1,-1.9) to [out=20,in=160] (5.9,-1.9);
\draw[->] (5.9,-2.1) to [out=-160,in=-20] (0.1,-2.1);

\draw[->] (0.1,-0.1) to (3.99,-1.9);
\draw[->] (0,-0.1) to (0,-1.9);
\end{tikzpicture}
\end{center}
\begin{center}
\begin{tikzpicture}[scale=0.8]
\draw[->] (4,-1) to (5,-1); 

\filldraw [black] (0,0) circle (1pt);
\draw (0,0.4) circle [radius=0.4cm];
\draw [->] (-0.01,0.8) -- (0.01,0.8);

\filldraw [black] (0,-2) circle (1pt);
\filldraw [black] (2,-2) circle (1pt);
\node at (4,-2) {\textcolor{blue}{$\ast$}};

\draw[->] (0.1,-1.9) to [out=20,in=160] (1.9,-1.9);
\draw[->] (2.1,-1.9) to [out=20,in=160] (3.9,-1.9);
\draw[->] (3.9,-2.1) to [out=-160,in=-20] (0.1,-2.1);

\draw[->] (0.1,-0.1) to (3.99,-1.9);
\draw[->] (0,-0.1) to (0,-1.9);
\draw[->] (-2,-1) to (-1,-1);

\filldraw [black] (6,0) circle (1pt);
\draw (6,0.4) circle [radius=0.4cm];
\draw [->] (5.99,0.8) -- (6.01,0.8);

\node at (8,-2) {\textcolor{blue}{$\ast$}}; 
\filldraw [black] (6,-2) circle (1pt);

\draw[->] (6.1,-1.9) to [out=20,in=160] (7.9,-1.9);
\draw[->] (7.9,-2.1) to [out=-160,in=-20] (6.1,-2.1);

\draw[->] (6,-0.1) to (6,-1.8);

\draw[->] (6,-0.1) to [out=-140,in=140] (5.99,-1.9);

\draw[->] (8,-1) to (9,-1);

\filldraw [black] (10,0) circle (1pt);
\draw (10,0.4) circle [radius=0.4cm];
\draw [->] (9.99,0.8) -- (10.01,0.8);

\draw (10,-2.4) circle [radius=0.4cm];
\draw [->] (9.99,-2.8) -- (10.01,-2.8);
\filldraw [black] (10,-2) circle (1pt);

\draw[->] (10,-0.1) to (10,-1.8);

\draw[->] (10,-0.1) to [out=-140,in=140] (9.99,-1.9);
%
\draw node at (11,0) { $E$};
\end{tikzpicture}
\end{center}
By \cite[Lemma 5.1]{S13} we obtain $C^*(G)\otimes \K \cong C^*(E)\otimes\K$ and hence 
\begin{equation}\label{getcontradict}
C(L_q^3(4;(2,1)))\otimes \K \cong C^*(E)\otimes \K.
\end{equation}
It is a consequence of the claim, \cite[Theorem 2.2]{BS}, made by Brzeziński and Szymański that $C(L_q^3(4;(2,1)))$ is isomorphic to the graph $C^*$-algebra of the following graph: 
\begin{center}
\begin{tikzpicture}[scale=1]
\filldraw [black] (10,0) circle (1pt);
\draw (10,0.4) circle [radius=0.4cm];
\draw [->] (9.99,0.8) -- (10.01,0.8);

\filldraw [black] (12,0) circle (1pt);
\draw (12,0.4) circle [radius=0.4cm];
\draw [->] (11.99,0.8) -- (12.01,0.8);

\draw (10,-2.4) circle [radius=0.4cm];
\draw [->] (9.99,-2.8) -- (10.01,-2.8);
\filldraw [black] (10,-2) circle (1pt);

\draw[->] (10,-0.1) to (10,-1.8);

\draw[->] (12,-0.1) to (10,-1.89);

\draw[->] (10,-0.1) to [out=-120,in=120] (9.99,-1.9);
\draw[->] (12,-0.1) to [out=-120,in=20] (10.1,-1.9);
%
\draw node at (8.5,0) { $L_{3}^{4;(2,1)}$};
\end{tikzpicture}
\end{center}
By considering the strongly connected components we have $|\text{Prime}_{\gamma}C^*(E)|=2$ and $|\text{Prime}_{\gamma}C^*(L_3^{4;(2,1)})|=3$. Since $\K$ is central and simple it follows that the ideal structure of the tensor product with $\K$ is completely determined by $C^*(E)$ or $C(L_q^3(4;(2,1)))$ (see e.g. \cite[Theorem 4.3.1]{DK94}). Then $C(L_q^3(4;(2,1)))$ cannot be isomorphic to $C^*(L_{3}^{4;(2,1)})$ since it contradicts 
\eqref{getcontradict}. 

We will in Theorem \ref{mainquantumlens} prove that the quantum lens spaces are indeed graph $C^*$-algebras of a modified graph from which it follows that $C(L_q^3(4;(2,1)))\cong C^*(E)$. 

\end{example}

\begin{remark}
The proof of \cite[Theorem 2.2]{BS} follows by constructing an explicit isomorphism. The problem with the isomorphism is that $p_{v_i^b}$ is mapped to $p_{(v_i,b)}$ for $i=0,1,...,n, b=0,1,...,\gcd(m_i,r)-1$. But $p_{(v_i,b)}$ for $b\neq 0$ is not contained in the corner in \eqref{corner} by orthogonality of the projections. 
\end{remark}

\subsection{A modified graph}
We now define a graph for which the main idea behind the construction is similar to the one for ${L}_{2n+1}^{r;\underline{m}}$. The main difference is that we restrict the set of vertices further. 
\begin{definition}
Let $n\geq 1$ be an integer, $r\in \Nb$ and $\underline{m}=(m_0,...,m_n)$ a sequence of positive integers. 
Let $H_{r;\underline{m}}$ be the smallest hereditary subset of $(L_{2n+1}\times_c\Zb_r)^0$ containing $\{(v_i,0)| i=0,...,n\}$. For each $i=0,...,n$ let 
$$
S_i:=\{k\in \{0,...,\gcd(m_i,r)-1\}| \ (v_i,k)\in H_{r;\underline{m}}\}
$$
Note that $S_0=\{0\}$. The graph $\overline{L}_{2n+1}^{r;\underline{m}}$ is defined as follows: 
$$
\begin{aligned}
(\overline{L}_{2n+1}^{r;\underline{m}})^0&:=\{v_i^k| \ i=0,...,n, k\in S_i\}, \\
(\overline{L}_{2n+1}^{r;\underline{m}})^1&:=
\{e_{ij;a}^{st}| 0\leq i\leq j\leq n, s\in S_i, t\in S_j, a=1,...,n_{ij}^{st}\},
\end{aligned}
$$
where $n_{ij}^{st}$ is the number of admissible paths from $(v_i,s)$ to $(v_j,t)$. The range and the source maps are given by: 
$$
s(e_{ij;a}^{st})=v_i^s, \ \ r(e_{ij;a}^{st})=v_j^t.
$$
\end{definition}
The graph $\overline{L}_{2n+1}^{r;\underline{m}}$ then consists of $\sum_{i=0}^n |S_i|$ vertices which we divide into $n+1$ levels. The levels are denoted as level $0$ to level $n$, where level $i$ consists of the vertices $v_i^k, k\in S_i$. There only exist edges from a lower indexed level to a higher one and each vertex is the base of precisely one loop. The graph is illustrated in Figure \ref{illustrationql} without indicating any edges. 

\begin{figure}[H]
\centering
\begin{tikzpicture}[scale=0.7]
\begin{small}
\filldraw [black] (0,-0.5) circle (2pt);
\node at (0.5,-0.5) {$v_0^0$};
\node at (-1.2,-0.5) {Level 0};

\filldraw [black] (0,-1.5) circle (2pt);
\node at (0.5,-1.5) {$v_1^0$};
\node at (-1.2,-1.5) {Level 1};

\filldraw [black] (2,-1.5) circle (2pt);
\node at (2.5,-1.5) {$v_1^1$};

\filldraw [black] (6,-1.5) circle (2pt);
\node at (6.9,-1.5) {$v_1^{|S_1|-1}$};

\draw[dotted] (2,-1.5) to (6,-1.5); 

\draw[dotted] (0,-1.5) to (0,-3); 

\node at (-1.2,-3) {Level n};
\filldraw [black] (0,-3) circle (2pt);
\node at (0.5,-3) {$v_n^0$};
\draw[dotted] (6,-1.5) to (6,-3); 
\draw[dotted] (2,-1.5) to (2,-3); 
\filldraw [black] (2,-3) circle (2pt);
\node at (2.5,-3) {$v_n^1$};

\filldraw [black] (6,-3) circle (2pt);
\node at (6.9,-3) {$v_n^{|S_n|-1}$};

\draw[dotted] (2,-3) to (6,-3); 
\end{small}
\end{tikzpicture}
\label{illustrationql}
\caption{Illustration of $\left(\overline{L}_{2n+1}^{r;\underline{m}}\right)^0$}
\end{figure}
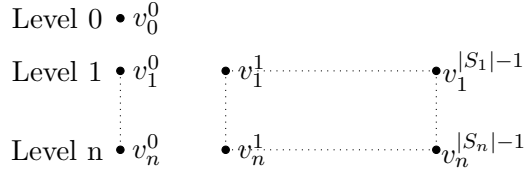

The difference between the definition of $L_{2n+1}^{r;\underline{m}}$ and $\overline{L}_{2n+1}^{r;\underline{m}}$ is that we restrict the vertices to the ones in the smallest hereditary subset of $(L_{2n+1}\times_c\Zb_r)^0$. In this way we avoid the problem in Example \ref{counterexample}, since we remove the vertices which are not in the hereditary subset i.e. the vertices $(v_0,1)$ and $(v_0,3)$. 

The purpose of this section is to prove the following theorem:
\begin{theorem}\label{mainquantumlens}
As $C^*$-algebras we have 
$$
C(L_q^{2n+1}(r;m)) \cong C^*(\overline{L}_{2n+1}^{r;\underline{m}}). 
$$
\end{theorem}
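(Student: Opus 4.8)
The plan is to carry out the reduction of Example \ref{counterexample} in general and then to match the resulting corner with $C^*(\overline L_{2n+1}^{r;\underline m})$, the delicate point being the choice of vertices. Starting from \eqref{corner}, I would first cut down by the hereditary set: every path issuing from a vertex $(v_i,0)$ remains inside $H_{r;\underline m}$, so the vertices outside $H_{r;\underline m}$ and their partial isometries do not enter the corner and
$$
C(L_q^{2n+1}(r;\underline m))\cong\Big(\sum_{i=0}^n p_{(v_i,0)}\Big)C^*(G)\Big(\sum_{i=0}^n p_{(v_i,0)}\Big)=:PC^*(G)P,
$$
where $G$ denotes the restriction of $L_{2n+1}\times_c\Zb_r$ to $H_{r;\underline m}$. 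Propagating the Cuntz--Krieger relations along edges, exactly as in Example \ref{counterexample}, shows that $p_w$ lies in the ideal generated by $P$ for every $w\in G^0$; hence $P$ is full, and by \cite[Corollary 2.6]{B77} together with \cite[Corollary 4.10]{AE15} the corner is again a graph $C^*$-algebra. The entire content of the theorem is then to show that its graph can be taken to be $\overline L_{2n+1}^{r;\underline m}$.

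The structural fact I would isolate is a description of the cycles of $G$. Since an edge of the skew product never lowers the level, each cycle lies in a single level $i$ and is built from the loop edges $(e_{ii},k)$, which act on level $i$ as the translation $k\mapsto k+m_i\Mod r$; its orbits are the cosets of $\langle m_i\rangle$ in $\Zb_r$, and each coset contains exactly one representative in $\{0,\dots,\gcd(m_i,r)-1\}$. Consequently every cycle of $G$ meets the set
$$
W':=\{(v_i,k)\mid i=0,\dots,n,\ k\in S_i\}
$$
in a single vertex, so $G\smallsetminus W'$ is acyclic. This is exactly where the description of \cite{BS} must be corrected: the correct vertex set consists of the low vertices lying in $H_{r;\underline m}$, that is $W'$, and for this vertex set the first-return paths to $W'$ inside $G$ are precisely the admissible paths, of which there are finitely many between two vertices of $W'$ because $G\smallsetminus W'$ has no cycle.

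To produce the isomorphism $\phi\colon C^*(\overline L_{2n+1}^{r;\underline m})\to PC^*(G)P$ I would assemble a Cuntz--Krieger $\overline L_{2n+1}^{r;\underline m}$-family inside the corner. For each low vertex $w=(v_i,k)\in W'$ I would fix a representative admissible path $\alpha_w$ from a vertex of $W=\{(v_j,0)\}$ to $w$, trivial when $k=0$, chosen so that the range projections $s_{\alpha_w}s_{\alpha_w}^*$ are mutually orthogonal. Setting $q_w:=s_{\alpha_w}s_{\alpha_w}^*$ for the extra vertices and $q_{(v_i,0)}:=p_{(v_i,0)}-\sum q_{w}$, summing over the extra vertices whose representative begins at $(v_i,0)$, gives orthogonal projections with $\sum_{w\in W'}q_w=P$. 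I would send $p_{v_i^k}$ to $q_{(v_i,k)}$ and send the edge $e_{ij;a}^{st}$, which corresponds to the $a$-th admissible path $\mu$ from $(v_i,s)$ to $(v_j,t)$, to $s_{\alpha_{(v_i,s)}}\,s_\mu\,s_{\alpha_{(v_j,t)}}^*$. The relations (CK1)--(CK2) are immediate, and (CK3) at a given source reduces to the assertion that its outgoing admissible paths enumerate, without repetition, all the ways of leaving it and running through $G\smallsetminus W'$ until the first return to $W'$, which is guaranteed by the acyclicity above. Universality then yields a surjection, which is an isomorphism by the gauge-invariant uniqueness theorem applied with a suitable circle action on the corner under which $\phi$ is equivariant.

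The step I expect to be the main obstacle is the coherent choice of representatives $\alpha_w$ making $\{q_w\}_{w\in W'}$ orthogonal with sum $P$, i.e. the decomposition of $P$ into one projection per vertex of $\overline L_{2n+1}^{r;\underline m}$. This is the precise place where \cite[Theorem 2.2]{BS} fails: there $p_{v_i^k}$ was sent to $p_{(v_i,k)}$, which does not lie in the corner when $k\neq0$, and in fact $\sum_{w\in W'}[p_w]\neq[P]$ in $K_0$ in general, so one cannot take $q_w\sim p_w$ for every $w$ and the extra vertices must truly be carved out of the base projections $p_{(v_i,0)}$. Organising the low vertices into a forest rooted at $W$ inside the acyclic graph $G\smallsetminus W'$ furnishes such a system of representatives; once it is fixed, the verification of the Cuntz--Krieger relations and of surjectivity is routine bookkeeping with admissible paths.
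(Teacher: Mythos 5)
Your opening reduction is sound and essentially matches the paper's: cutting down to the hereditary subgraph $G$, the fullness of $P=\sum_{i}p_{(v_i,0)}$, and the observation that every cycle of $G$ meets $W'$ in exactly one vertex (so that admissible paths are the first-return paths to $W'$) all agree with Example \ref{counterexample} and Lemmas \ref{MainLemma} and \ref{fullprojection}. The gap is in your final step, and it is fatal precisely in the cases the theorem newly addresses. Your prescribed family is not a Cuntz--Krieger $\overline{L}_{2n+1}^{r;\underline{m}}$-family: for an edge $e$ corresponding to an admissible path $\mu$ from $(v_i,s)$ to $(v_j,t)$ one computes
\begin{equation*}
t_e^*t_e \;=\; s_{\alpha_{(v_j,t)}}\,s_\mu^*\,p_{(v_i,s)}\,s_\mu\, s_{\alpha_{(v_j,t)}}^* \;=\; s_{\alpha_{(v_j,t)}}s_{\alpha_{(v_j,t)}}^*,
\end{equation*}
which for $t=0$ equals $p_{(v_j,0)}$, not your prescribed image $q_{(v_j,0)}=p_{(v_j,0)}-\sum_w q_w$ of $p_{v_j^0}$. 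Since every vertex of $\overline{L}_{2n+1}^{r;\underline{m}}$ is the base of a loop, (CK1) therefore fails at every vertex $(v_j,0)$ from which one of your representative paths $\alpha_w$ emanates, and the same computation shows that (CK3) at such a vertex produces $p_{(v_j,0)}$ rather than $q_{(v_j,0)}$. This failure occurs as soon as $W'\neq W$, i.e.\ as soon as some $S_i\neq\{0\}$; and when $W'=W$ the theorem is immediate from \eqref{corner} and Lemma \ref{MainLemma}, since then $P$ is the sum of \emph{all} vertex projections of the corner $C_{r,\underline{m}}$. So your argument only settles the case that was never in doubt.

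This is not "routine bookkeeping" to repair. If you force (CK1) by inserting $q_{(v_j,0)}$ into $t_e$ for edges ranging at base vertices, then (CK3) at the source breaks, because the sum of the modified range projections falls strictly short of the required projection. What is really needed is a decomposition of $P$ into $|W'|$ subprojections satisfying \emph{all} the Cuntz--Krieger relations simultaneously, i.e.\ an explicit resolution of the unit-matching problem: fullness of $P$ only yields stable isomorphism via \cite{B77}, and a full corner of a graph $C^*$-algebra need not be isomorphic to the algebra itself. The paper's proof of \eqref{corner2} shows how much machinery this step actually consumes: stabilization, the $K_0$-identity $[P]=\sum_{v}a_v[p_v]$ obtained from \cite[Lemma 4.3]{AE15} (which uses Lemma \ref{existpath}), the head/source exchange of Lemma \ref{graphsisom}, and finally \cite[Theorem 14.6]{errs2}, which rests on the full classification theorem; no explicit Cuntz--Krieger family is ever produced, and your proposal offers no substitute for this. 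Two smaller problems point the same way: representatives organized in a forest of depth greater than one cannot have mutually orthogonal ranges (if $\alpha_{w}=\alpha_{w'}\beta$ then $s_{\alpha_{w}}s_{\alpha_{w}}^*\leq s_{\alpha_{w'}}s_{\alpha_{w'}}^*$), and your $t_e$ are not covariant for the canonical gauge action (they scale by $z^{|\alpha_{(v_i,s)}|+|\mu|-|\alpha_{(v_j,t)}|}$), so injectivity would in any case require the generalized uniqueness theorem of \cite{S02b}, as in Lemma \ref{MainLemma}, rather than gauge-invariant uniqueness.
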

\begin{remark}
\label{Remark:GraphComp}
If $\gcd(m_0,r)=1$, then one can always find a path from $(v_0,0)$ to any given vertex in the skew-product graph, and hence $\overline{L}_{2n+1}^{(r;\underline{m})}$ is the same as $L_{2n+1}^{(r;\underline{m})}$. It follows in this particular case, that our description of quantum lens spaces as graph $C^*$-algebras agrees with the one given in \cite{BS}. Consequently all of the examples given in that paper as well as the work done in \cite{hs} and \cite{errs} is still valid under the description given in Theorem \ref{mainquantumlens}
\end{remark}

To show that $C(L_q^{2n+1}(r;\underline{m}))$ is isomorphic to $C^*(\overline{L}_{2n+1}^{r;\underline{m}})$ we need a couple of lemmas. 
With \cite[Theorem 2.2]{BS} in mind we will show the following.

\begin{lemma}\label{MainLemma}
There exists a $*$-isomorphism 
\begin{equation}\label{cornerisom}
\psi: C^*(\overline{L}_{2n+1}^{r;\underline{m}})\to \left(\sum_{\substack{(v_i,k) \\ i=0,...,n, \ k\in S_i}} p_{(v_i,k)}\right) C^*(L_{2n+1}\times_c\Z_r)\left(\sum_{\substack{(v_i,k) \\ i=0,...,n, \  k\in S_i}} p_{(v_i,k)}\right)
\end{equation}
such that 
$$
\psi(p_{v_i^k})=p_{(v_i,k)}, \ i=0,...,n, k\in S_i. 
$$
For an admissible path $\alpha=(e_{ii_1},k+m_i)(e_{i_1,i_2},k_1)\cdots (e_{i_mi_{m+1}},k_m)(e_{i_{m+1}j},t)$
from $(v_i,k)$ to $(v_j,t)$ with $i,j=0,...,n$, $k\in S_i$ and $t\in S_j$ we let:
$$
\psi(s_{\alpha})=s_{(e_{ii_1},k+m_i)}s_{(e_{i_1,i_2},k_1)}\cdots s_{(e_{i_mi_{m+1}},k_m)}s_{(e_{i_{m+1}j},t)}. 
$$
We denote from now on the corner in \eqref{cornerisom} by $C_{r,\underline{m}}$. 
\end{lemma}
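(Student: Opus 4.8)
The plan is to construct $\psi$ from the universal property of $C^*(\overline{L}_{2n+1}^{r;\underline{m}})$, prove surjectivity by a path-decomposition argument, and obtain injectivity from the gauge-invariant uniqueness theorem. First I would set $P_{v_i^k}:=p_{(v_i,k)}$ for $k\in S_i$ and, for each edge $e_{ij;a}^{st}$, let $T_{e_{ij;a}^{st}}:=s_{\alpha}$ where $\alpha$ is the corresponding admissible path from $(v_i,s)$ to $(v_j,t)$. Since $s_\alpha s_\alpha^*\le p_{(v_i,s)}$ and $s_\alpha^*s_\alpha=p_{(v_j,t)}$ with both endpoints in $H_{r;\underline{m}}$, these elements lie in $C_{r,\underline{m}}$. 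Relations (CK1) and (CK2) are then immediate from $s_\alpha^*s_\alpha=p_{r(\alpha)}$ and $s_\alpha s_\alpha^*\le p_{s(\alpha)}$.

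The combinatorial heart of the argument is that, because every edge $e_{ij}$ of $L_{2n+1}$ satisfies $i\le j$, the level coordinate is non-decreasing along paths, so the interior vertices of a path between the \emph{special} vertices $(v_\ell,k)$ with $k<\gcd(m_\ell,r)$ are exactly the places at which one may cut. Hence an admissible path is precisely an irreducible segment with no special interior vertex, and every path in $L_{2n+1}\times_c\Z_r$ with source in $H_{r;\underline{m}}$ and special range factors uniquely as a concatenation of admissible paths. This unique factorization yields the mutual orthogonality $T_e^*T_f=s_\alpha^*s_\beta=0$ for $e\ne f$ (no admissible path is a proper initial segment of another, as that would force a special interior vertex), and, by iterating (CK3) in the skew product until every branch reaches a special vertex, it yields $P_{v_i^k}=\sum_{s(e)=v_i^k}T_eT_e^*$, i.e.\ (CK3) for $\overline{L}_{2n+1}^{r;\underline{m}}$. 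By universality $\psi$ is then a well-defined $*$-homomorphism with $\psi(p_{v_i^k})=p_{(v_i,k)}$.

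For surjectivity I would use that, writing $V=\{(v_i,k): i=0,\dots,n,\ k\in S_i\}$, one has $C_{r,\underline{m}}=\overline{\span}\{s_\mu s_\nu^* : s(\mu),s(\nu)\in V,\ r(\mu)=r(\nu)\}$. If the common range is special, factoring $\mu,\nu$ into admissible pieces writes $s_\mu s_\nu^*$ directly as a $\psi$-image; if not, inserting $p_{r(\mu)}=\sum_{s(e')=r(\mu)}s_{e'}s_{e'}^*$ extends both $\mu$ and $\nu$ by a common suffix, and iterating until the range becomes special (which happens in finitely many steps, since following edges from any vertex reaches a special vertex) reduces to the previous case. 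Thus the image of $\psi$ is dense, hence all of $C_{r,\underline{m}}$.

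Injectivity I would deduce from the gauge-invariant uniqueness theorem. Each $\psi(p_{v_i^k})=p_{(v_i,k)}$ is a nonzero projection, so it remains to produce a circle action $\beta$ on $C_{r,\underline{m}}$ with $\psi\circ\gamma_z=\beta_z\circ\psi$. The unique factorization lets me grade $C_{r,\underline{m}}$: to $s_\mu s_\nu^*$ I assign the integer obtained by completing $\mu,\nu$ by a common path $\delta$ to a special range vertex and taking (number of admissible pieces of $\mu\delta$) minus (number of admissible pieces of $\nu\delta$); unique factorization shows this is independent of $\delta$ and is preserved by the Cuntz--Krieger relations, so $\beta_z$ acting as $z^{d}$ on the degree-$d$ part is a well-defined strongly continuous action for which $\psi(s_e)=s_\alpha$ has degree $1$, matching $\gamma_z(s_e)=zs_e$. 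I expect the main obstacle to be precisely this well-definedness of the grading---equivalently, the uniqueness of the admissible factorization---together with verifying (CK3); once the factorization is established, the remaining verifications are routine.
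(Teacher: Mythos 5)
Your construction of $\psi$ and your surjectivity argument follow the paper's proof essentially step for step: universality plus the observation that an admissible path cannot be a proper initial segment of another admissible path (which gives the mutually orthogonal ranges and (CK1)--(CK2)), the iterated application of (CK3) in the skew product until every branch terminates at a vertex $(v_j,t)$ with $t\in S_j$, and, for surjectivity, the identification of the corner with $\overline{\operatorname{span}}\{s_\mu s_\nu^*\colon s(\mu),s(\nu)\in V,\ r(\mu)=r(\nu)\}$ followed by insertion of $p_{r(\mu)}=\sum_{s(e)=r(\mu)}s_es_e^*$ to extend $\mu$ and $\nu$ by common suffixes until the range is special. Where you genuinely diverge is injectivity. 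The paper builds no circle action at all: it invokes Szymański's generalised Cuntz--Krieger uniqueness theorem \cite[Theorem 1.2]{S02b}, so that besides $\psi(p_{v_i^k})\neq 0$ one only needs the spectrum of $\psi(s_{e_{nn;1}^{kk}})$ --- the image of the unique, exit-free loop at each bottom-level vertex, i.e.\ the partial isometry around the full cycle in the skew product --- to contain the whole unit circle, which is extracted from the end of the proof of \cite[Theorem 2.4]{KPR98}. That citation bypasses exactly the difficulty you flag.

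Your route via gauge-invariant uniqueness can be made to work, but as written it has a gap: declaring that $\beta_z$ acts as $z^d$ ``on the degree-$d$ part'' presupposes that the degree decomposition is well defined on the algebra, not merely on the spanning words $s_\mu s_\nu^*$. These words are far from linearly independent, so ``preserved by the Cuntz--Krieger relations'' is an assertion rather than a proof, and unique factorization of paths by itself does not produce a strongly continuous action by $*$-automorphisms. The standard repair is to realise your grading by a universal action on the ambient algebra: label each edge $e$ of $L_{2n+1}\times_c\Z_r$ by $c(e)=1$ if $r(e)$ is one of the distinguished vertices $(v_j,t)$, $t\in S_j$, and $c(e)=0$ otherwise. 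By universality of $C^*(L_{2n+1}\times_c\Z_r)$ there is a strongly continuous action $\beta$ of the circle with $\beta_z(s_e)=z^{c(e)}s_e$ and $\beta_z(p_v)=p_v$; it fixes the corner projection, hence restricts to $C_{r,\underline{m}}$. Since an admissible path has exactly one edge landing in a distinguished vertex (its last one), $\beta_z(\psi(s_f))=z\,\psi(s_f)$ for every edge $f$ of $\overline{L}_{2n+1}^{r;\underline{m}}$, so $\beta_z\circ\psi=\psi\circ\gamma_z$, and your degree of $s_\mu s_\nu^*$ is exactly $c(\mu)-c(\nu)$, manifestly independent of the completion $\delta$. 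With that substitution your argument closes, at the cost of proving by hand what the paper obtains by citing the generalised uniqueness theorem.
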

\begin{proof}
Let $\alpha$ and $\beta$ be two admissible paths between vertices in the set $\{(v_i,k)| \ i=0,...,n, k\in S_i\}$. Then $s_{\alpha}^*s_{\beta}=0$ if $\alpha\neq \beta$ since the partial isometries in $C^*(\overline{L}_{2n+1}^{r;\underline{m}})$ have mutually orthogonal range projections. We then have to show that $\psi(s_{\alpha}^*)\psi(s_{\beta})=0$ if $\alpha\neq \beta$. It follows that $\psi(s_{\alpha}^*)\psi(s_{\beta})$ equals $s_{\beta'}$ if $\beta=\alpha\beta'$ and $s_{\alpha'}^*$ if $\alpha=\beta\alpha'$ otherwise it equals zero (see e.g. \cite[Corollary 1.14]{R05}) . Since $\alpha$ and $\beta$ are admissible paths and $\alpha\beta'$ and $\beta\alpha'$ are not the two first cases cannot happen. Hence, $\psi(s_{\alpha}^*)\psi(s_{\beta})$ equals zero if $\alpha\neq \beta$. 

We will now show that the image of $p_{v_i^k}$ and $s_{\alpha}$ satisfies the defining relations in Definition \ref{graphalgebra} for $\overline{L}_{2n+1}^{r;\underline{m}}$. Then by universality $\psi$ is a $*$-homomorphism. Using the defining relations for graph $C^*$-algebras, it follows by an easy calculation that $\psi(s_{\alpha}^*)\psi(s_{\alpha})=p_{r(\alpha)}$ and $\psi(s_{\alpha})\psi(s_{\alpha}^*)\leq p_{s(\alpha)}$, hence condition (CK1) and (CK2) are satisfied. For condition (CK3) we fix a $v_i^k$ in $\left(\overline{L}_{2n+1}^{r;\underline{m}}\right)^0$. Let $A$ be the collection of all admissible paths from $(v_i,k)$ to a $(v_j,t)$ with $j=0,...,n, t\in S_j$. Since each outgoing edge of $v_i^k$ to a $v_j^t$ corresponds to an admissible path from $(v_i,k)$ to $(v_j,t)$, we wish to show that 
\begin{equation}\label{toshow}
p_{(v_i,k)}=\sum_{\alpha\in A} s_{\alpha}s_{\alpha}^*.
\end{equation}
By condition (CK3) on $L_{2n+1}\times_c\Zb_r$ and the identity $s_e=s_ep_{r(e)}=p_{s(e)}s_e$ we have 
\begin{equation}\label{eq1}
\begin{aligned}
p_{(v_i,k)}&=\sum_{i_1=i}^n s_{(e_{ii_1},m_i+k)}s_{(e_{ii_1},m_i+k)}^* \\
&= \sum_{i_1=i}^n s_{(e_{ii_1},m_i+k)}p_{r((e_{ii_1}m_i+k))}s_{(e_{ii_1},m_i+k)}^* \\
&= \sum_{i_1=i}^n s_{(e_{ii_1},m_i+k)}p_{(v_{i_1},m_i+k)}s_{(e_{ii_1},m_i+k)}^*. 
\end{aligned}
\end{equation}
For each $i_1\in \{i,...,n\}$, if $e_{({ii_1},m_i+k)}$ is not in $A$ then we substitute
$$
p_{(v_{i_1},m_i+k)}=\sum_{i_2=i_1}^n s_{(e_{i_1i_2},m_i+m_{i_1}+k)}s_{(e_{i_1i_2},m_i+m_{i_1}+k)}^*
$$
in \eqref{eq1}. Let $A_{1}$ be the set of all $(e_{{ii_1}},m_i+k), i_1=i,...n$ for which $(e_{{ii_1}},m_i+k)$ is inside $A$. Moreover, let $I_1$ be the set of all $i_1$ for which $(e_{{ii_1}},m_i+k)\in A_1$. Note that $A_1$ is finite since the paths have to be admissible. 
Then 
\vspace{0.2cm}
\\
\resizebox{\columnwidth}{!}{
$
\begin{aligned}
p_{(v_i,k)}&= \sum_{\substack{i_1=i \\  i_1\notin I_1}}^n \left( s_{(e_{ii_1},m_i+k)}
\left(\sum_{i_2=i_1}^n s_{(e_{i_1i_2},m_i+m_{i_1}+k)}p_{(v_{i_2},m_i+m_{i_1}+k)}s_{(e_{i_1i_2},m_i+m_{i_1}+k)}^*
\right)
s_{(e_{ii_1},m_i+k)}^*\right) \\
&+ \ \  \sum_{\alpha\in A_1} s_{\alpha}s_{\alpha}^*.
\end{aligned}
$}
\vspace{0.2cm}
\\
 Similarly, let $A_2$ contain all paths $(e_{ii_1},m_i+k)(e_{i_1i_2},m_i+m_{i_1}+k)$ which are contained in $A$. Let $I_2$ be the set of all $(i_1,i_2)$ for which $(e_{ii_1},m_i+k)(e_{i_1i_2},m_i+m_{i_1}+k)\in A_2$. Then 
$$
\begin{aligned}
p_{(v_i,k)}&= \sum_{\substack{i_1=i \\  i_1\notin I_1}}^n \sum_{\substack{i_2=i_1 \\ \ \ \ (i_1,i_2)\notin I_2}}^n s_{(e_{ii_1},m_i+k)}
s_{(e_{i_1i_2},m_i+m_{i_1}+k)} \\
& \ \ \cdot \left( \sum_{i_3=i_2}^n  s_{(e_{i_2i_3},m_i+m_{i_1}+m_{i_2}+k)}s_{(e_{i_2i_3},m_i+m_{i_1}+m_{i_2}+k)}^*   \right)
s_{(e_{i_1i_2},m_i+m_{i_1}+k)}^*
s_{(e_{ii_1},m_i+k)}^* \\
& \ \ +  \sum_{\alpha\in A_1\cup A_2} s_{\alpha}s_{\alpha}^*.
\end{aligned}
$$
Proceeding inductively, let $A_s$ contain the set of all paths
\begin{equation}\label{eq2}
\begin{aligned}
&(e_{ii_1},m_i+k)(e_{i_1i_2},m_i+m_{i_1}+k)(e_{i_2i_3},m_i+m_{i_1}+m_{i_2}+k) \\ &\cdots\cdots
(e_{i_{s-1}i_s},m_i+m_{i_1}+m_{i_2}+\cdots +m_{i_{s-1}}+k)
\end{aligned}
\end{equation}
which are contained in $A$. Let $I_s$ contain all $(i_1,i_2,...,i_s)$ for which the path in \eqref{eq2} is contained in $A_s$. Note that $A_s$ consists of admissible paths of length $s$. 

Continuing as above we will at some point obtain that all the paths are admissible, hence the procedure terminates. This happens since we do not have any edges from a higher level to a lower one in the finite graph $L_{2n+1}\times_c\Zb_r$. Hence, there exists a $m\in\mathbb{N}$ such that $(i_1,i_2,....,i_m)$ are all contained in $I_m$. Then
$$
p_{(v_i,k)}=  \sum_{\alpha\in A_1\cup A_2\cup \cdots \cup A_m} s_{\alpha}s_{\alpha}^*.
$$
Furthermore, since we in each step consider all the outgoing edges of a vertex we construct indeed all admissible paths by this procedure. Hence, $A=A_1\cup A_2\cup \cdots \cup A_m$ and we obtain \eqref{toshow}. 

For surjectivity we observe that 
$$
C_{r,\underline{m}}=\overline{\span}\{s_{\mu}s_{\nu}^*| \ r(\mu)=r(\nu), s(\mu),s(\nu)\in \{(v_i,k), i=0,...,n, k\in S_i\}\},
$$
which follows by the fact that 
$$
\left(\sum_{\substack{(v_i,k) \\ i=0,...,n, \ k\in S_i}} P_{(v_i,k)}\right)s_{\mu}s_{\nu}^*\left(\sum_{\substack{(v_i,k) \\ i=0,...,n, \ k\in S_i}} P_{(v_i,k)}\right)
$$
is non-zero if and only if $s(\mu),s(\nu)\in \{(v_i,k), i=0,...,n, k\in S_i\}$. 

Let $\mu$ and $\nu$ be paths such that $s_{\mu}s_{\nu}^*\in C_{r,\underline{m}}$ and for which the range of $\mu$ and $\nu$ are in $\{(v_i,k), i=0,...,n, k\in S_i\}$. Then we see immediately
that $\mu=\mu_1\cdots\mu_s$ and $\nu=\nu_1\cdots\nu_t$ for some admissible paths $\mu_j,\nu_j$ i.e. $\mu_i,\nu_i$ corresponds to edges in $C^*(\overline{L}_{2n+1}^{r;\underline{m}})$. Then 
$$
\psi(s_{\mu_1\cdots\mu_s}s_{\nu_1\cdots\nu_t}^*)=s_{\mu}s_{\nu}^*.
$$
Hence, $s_{\mu}s_{\nu}^*$ is in the image of $\psi$. 

Let now $\mu$ and $\nu$ be paths with range not in $\{(v_i,k), i=0,...,n, k\in S_i\}$ such that $s_{\mu}s_{\nu}^*\in C_{r,\underline{m}}$.
By using that
$$
s_{\mu}s_{\nu}^*=s_{\mu}\left(\sum_{\substack{e\in (L_{2n+1}\times_c\Zb_r)^1 \\ s(e)=r(\mu)}} s_es_e^* \right)s_{\nu}^*. 
$$
a number of times, similar as in the proof of condition (CK3), we obtain that 
$$
s_{\mu}s_{\nu}^*=\sum_{i=1}^m s_{\mu\alpha_i}s_{\nu\alpha_i}^*
$$
where $\alpha_i, i=1,...,m$ are paths from $r(\mu)$ to a vertex in $\{(v_i,k), i=0,...,n, k\in S_i\}$. Then for $i=1,...,m$, $\mu\alpha_i$ and $\nu\alpha_i$ represents a path in $C^*(\overline{L}_{2n+1}^{r;\underline{m}})$ and $s_{\mu}s_{\nu}^*$ is then in the image of $\psi$. 

Finally, to prove that $\psi$ is injective we apply the generalised Cuntz-Krieger uniqueness theorem presented in \cite[Theorem 1.2]{S02b}. We have that $\psi(p_{v_i^k})$ is non-zero for all $i=0,...,n,k\in S_i$. By \cite[Theorem 1.2]{S02b}, we obtain that $\psi$ is injective if the spectrum of 
$$\psi(s_{e_{nn;1}^{kk}})=s_{(e_{nn},k+m_n)}s_{(e_{nn},k+2m_n)}\cdots s_{\left(e_{nn},k+\left(\frac{r}{\gcd(m_n,r)}-1\right)m_n\right)}s_{(e_{nn},k)}
$$
for each $k\in S_n$ contains the entire unit circle. By the last part of the proof of Theorem 2.4 in \cite{KPR98} it follows that this is indeed the case. Hence, $\psi$ is injective. 
\end{proof}

\begin{lemma}\label{existpath} For each $v_i^k, i=0,...,n, k\in S_i$ there is a path from $v_0^0$ to $v_i^k$ in $\overline{L}_{2n+1}^{r;\underline{m}}$.
\end{lemma}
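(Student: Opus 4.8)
The plan is to transfer the statement to the skew-product graph $L_{2n+1}\times_c\Z_r$ and then exploit hereditariness. First I would set up a dictionary between paths in the two graphs: a path in $\overline{L}_{2n+1}^{r;\underline{m}}$ from $v_0^0$ to $v_i^k$ is, by definition of the edges, a concatenation of admissible paths, hence yields a path in $L_{2n+1}\times_c\Z_r$ from $(v_0,0)$ to $(v_i,k)$; conversely, given any path $\gamma$ in the skew product from $(v_0,0)$ to $(v_i,k)$, I would chop it at every occurrence of a vertex of the form $(v_\ell,c)$ with $c<\gcd(m_\ell,r)$. Each resulting piece then contains no such vertex in its interior, and since the edges of $L_{2n+1}\times_c\Z_r$ never decrease the level, it visits only levels between its two endpoints; it is therefore admissible. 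Moreover every vertex on $\gamma$ satisfies $(v_0,0)\ge w$, so by hereditariness it lies in $H_{r;\underline{m}}$; in particular each chopping point $(v_\ell,c)$ has $c\in S_\ell$ and is a genuine vertex of $\overline{L}_{2n+1}^{r;\underline{m}}$. Thus each piece is an edge of $\overline{L}_{2n+1}^{r;\underline{m}}$ and the concatenation is the desired path, reducing the lemma to producing a skew-product path from $(v_0,0)$ to $(v_i,k)$.

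The second step is to produce such a path. Since $v_i^k$ is a vertex of $\overline{L}_{2n+1}^{r;\underline{m}}$ we have $(v_i,k)\in H_{r;\underline{m}}$, i.e. $(v_j,0)\ge (v_i,k)$ for some $j$, so it suffices to show that every $(v_j,0)$ is reachable from $(v_0,0)$. Here I would use the loop at $v_0$ to steer the $\Z_r$-coordinate: traversing $(e_{00},\cdot)$ adds $m_0$ to the coordinate, and traversing $(e_{0j},\cdot)$ both moves to level $j$ and adds $m_0$. Writing $L_0=r/\gcd(m_0,r)$, the path given by $L_0-1$ loops at $v_0$ followed by the single edge $e_{0j}$ starts at $(v_0,0)$ and ends at $(v_j,\,L_0 m_0\bmod r)=(v_j,0)$, because $L_0 m_0$ is a multiple of $r$. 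Concatenating with a path realizing $(v_j,0)\ge(v_i,k)$ produces a skew-product path from $(v_0,0)$ to $(v_i,k)$, which by the first step yields the required path in $\overline{L}_{2n+1}^{r;\underline{m}}$. As a by-product this identifies $H_{r;\underline{m}}$ as exactly the set of vertices $w$ with $(v_0,0)\ge w$.

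I expect the main obstacle to be the first step rather than the coordinate bookkeeping: one must check carefully that chopping a skew-product path at the ``special'' vertices really returns edges of $\overline{L}_{2n+1}^{r;\underline{m}}$. Two features make this work: (i) the monotonicity of levels along skew-product edges, which collapses the level-restricted admissibility condition (over $\ell\in\{i,\dots,j\}$) to the unrestricted statement that no vertex $(v_\ell,c)$ with $c<\gcd(m_\ell,r)$ occurs in the interior of a piece; and (ii) hereditariness of $H_{r;\underline{m}}$, which guarantees that every chopping point actually belongs to $\overline{L}_{2n+1}^{r;\underline{m}}$, and not merely to the larger index set $\{0,\dots,\gcd(m_\ell,r)-1\}$ appearing in the definition of admissibility. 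Once these are in place, the $\Z_r$-coordinate computation of the second step is routine.
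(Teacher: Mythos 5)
Your proof is correct and follows essentially the same route as the paper: both obtain a skew-product path from some $(v_j,0)$ to $(v_i,k)$ directly from the definition of $H_{r;\underline{m}}$, connect $(v_0,0)$ to $(v_j,0)$ by a path built from loops (you jump straight from level $0$ using $r/\gcd(m_0,r)-1$ loops followed by $e_{0j}$, while the paper chains such loop-paths level by level), and then decompose the concatenated skew-product path into admissible subpaths realizing edges of $\overline{L}_{2n+1}^{r;\underline{m}}$. Your explicit check that the chopping points lie in $H_{r;\underline{m}}$ (hence in the sets $S_\ell$) by hereditariness, and that level-monotonicity makes each piece admissible, merely spells out what the paper compresses into the phrase ``we divide it into admissible subpaths.''
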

\begin{proof}
Let $v_i^k, i=0,...,n, k\in S_i$ then $(v_i,k)\in H_{r;\underline{m}}$ by definition. Hence, there exists a path $\alpha$ from $(v_j,0)$ for at least one $j=0,...,n$ to $(v_i,k)$. If the path is not admissible we divide it into admissible subpaths. Furthermore, there is always an admissible path from $(v_{\ell-1},0)$ to $(v_\ell,0)$ for any $\ell=0,...,n$ as follows:
\vspace{0.2cm}
\\
\resizebox{\columnwidth}{!}{
$
(e_{(\ell-1)(\ell-1)},m_{l-1})(e_{(\ell-1)(\ell-1)},2m_{\ell-1})\cdots \left(e_{(\ell-1)(\ell-1)},\left(\frac{r}{\gcd(m_{\ell-1},r)}-1\right)m_{\ell-1}\right)(e_{(\ell-1)l},0).
$}
\vspace{0.2cm}
\\
Combining these paths we obtain a path from $(v_0,0)$ to a $(v_j,0)$ with $j=0,...,n$, call the path $\beta$. Then we obtain a path $\beta\alpha$ from $(v_0,0)$ to $(v_i,k)$  which consists of admissible subpaths. Hence, there is indeed a path from $v_0^0$ to $v_i^k$. 
\end{proof}

\begin{lemma}\label{fullprojection}
The projection $\sum_{i=0}^{n}p_{v_i^0}$ is full in $C^*(\overline{L}_{2n+1}^{r;\underline{m}})$. 
\end{lemma}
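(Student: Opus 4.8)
The plan is to prove that the closed two-sided ideal $I$ generated by the projection $P:=\sum_{i=0}^{n}p_{v_i^0}$ equals all of $C^*(\overline{L}_{2n+1}^{r;\underline{m}})$. Since the algebra is generated as a $C^*$-algebra by the vertex projections $p_w$ and the partial isometries $s_e$, and since the standard identity $s_e=p_{s(e)}s_e$ forces $s_e\in I$ as soon as $p_{s(e)}\in I$, it suffices to prove that $p_w\in I$ for every vertex $w\in(\overline{L}_{2n+1}^{r;\underline{m}})^0$. Once all vertex projections lie in $I$, every generator lies in $I$, so $I$ contains a dense $*$-subalgebra and, being closed, equals the whole algebra.

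To place each vertex projection in $I$, I would invoke Lemma \ref{existpath}: for each vertex $w=v_i^k$ there is a path $\alpha$ in $\overline{L}_{2n+1}^{r;\underline{m}}$ with $s(\alpha)=v_0^0$ and $r(\alpha)=v_i^k$. Iterating (CK1) along $\alpha$, together with the identity $p_{r(e_j)}s_{e_{j+1}}=s_{e_{j+1}}$ applied at each stage (which holds because $r(e_j)=s(e_{j+1})$ and $s_e=p_{s(e)}s_e$), yields the path relation $s_\alpha^*s_\alpha=p_{r(\alpha)}=p_{v_i^k}$, and likewise $s_\alpha=p_{v_0^0}s_\alpha$. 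Because the level-$0$ vertex projections $p_{v_j^0}$ are mutually orthogonal, we have $p_{v_0^0}P=Pp_{v_0^0}=p_{v_0^0}$, so $p_{v_0^0}=p_{v_0^0}\,P\,p_{v_0^0}\in I$. Consequently
\[
p_{v_i^k}=s_\alpha^*s_\alpha=s_\alpha^*\,p_{v_0^0}\,s_\alpha=s_\alpha^*\left(p_{v_0^0}\,P\,p_{v_0^0}\right)s_\alpha\in I,
\]
using that $I$ is a two-sided ideal containing $P$. Since $v_i^k$ was arbitrary, every vertex projection lies in $I$.

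Combining the two steps gives $I=C^*(\overline{L}_{2n+1}^{r;\underline{m}})$, so $P$ is full. The genuine content of the statement is carried entirely by Lemma \ref{existpath}; everything else is the routine path calculus for graph $C^*$-algebras and the orthogonality of the level-$0$ vertex projections. I therefore do not anticipate a real obstacle. The only points requiring care are verifying the path relation $s_\alpha^*s_\alpha=p_{r(\alpha)}$ by induction on the length of $\alpha$, and observing that $\alpha$ is a genuine path in the modified graph $\overline{L}_{2n+1}^{r;\underline{m}}$ rather than merely in the skew product $L_{2n+1}\times_c\Z_r$ — which is precisely what Lemma \ref{existpath} guarantees.
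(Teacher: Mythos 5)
Your proposal is correct and takes essentially the same approach as the paper: both arguments rest entirely on Lemma \ref{existpath} and then transport $p_{v_0^0}$ along the resulting path using the Cuntz--Krieger relations to conclude $p_{v_i^k}\in I$. The only cosmetic difference is that you conjugate by the full path isometry $s_\alpha$ in a single computation, whereas the paper's proof iterates the same estimate edge by edge along $\alpha=f_1f_2\cdots f_m$.
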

\begin{proof}
Let $I$ be the ideal generated by $\sum_{i=0}^{n}p_{v_i^0}$. Note that we clearly have $p_{v_i^0}\in I$ for $i=0,...,n$. We wish to show that for any $v_i^k, i=0,...,n, k\in S_i$ we have $p_{v_i^k}\in I$, since then $I=C^*(\overline{L}_{2n+1}^{r;\underline{m}})$ and $\sum_{i=0}^{n}p_{v_i^0}$ is full. 

Let $\alpha=f_1f_2\cdots f_m$ with $f_j\in (\overline{L}_{2n+1}^{r;\underline{m}})^1$ for $j=1,...,m$ be a path from $v_0^0$ to a $v_i^k$ with $i\in \{0,...,n\},  k\in S_i$ which we know exists by Lemma \ref{existpath}. By the Cuntz-Krieger relations we have 
$$
s_{f_1}s_{f_1}^*\leq p_{v_0^0}, \ \ \ s_{f_1}^*s_{f_1}=p_{r(f_1)}. 
$$
Then $s_{f_1}s_{f_1}^*=s_{f_1}s_{f_1}^*p_{v_0^0}\in I$ and we obtain $s_{f_1}^*\in I$ which implies $p_{r(f_1)}\in I$. We can apply the same argument as above to show $p_{r(f_2)}\in I$ if we replace $p_{v_0^0}$ with $p_{r(f_1)}$ and $p_{r(f_1)}$ with $p_{r(f_2)}$. By continuing this argument we obtain that $p_{v_i^k}=p_{r(f_m)}\in I$. 
\end{proof}

\begin{lemma}\label{graphsisom}
Let $E=(E^0,E^1,r,s)$ be a directed graph. For each $v\in E^0$, choose an $a_v\geq 1$. Define $F$ to be the graph such that for each $v\in E^0$ we add a head of $a_v-1$ vertices to $v$.
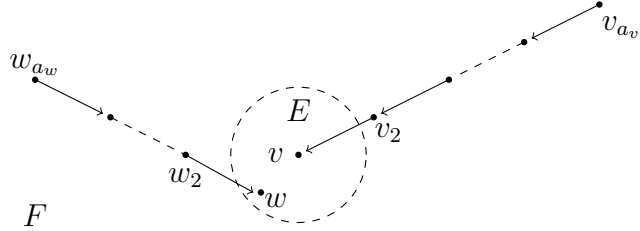
\begin{figure}[H]
\centering
\begin{tikzpicture}[scale=1]
\draw [dashed] (0,0) circle [radius=0.9 cm];
\filldraw [black] (0,0) circle (1pt);
\filldraw [black] (1,0.5) circle (1pt);
\filldraw [black] (2,1) circle (1pt);
\filldraw [black] (3,1.5) circle (1pt);
\filldraw [black] (4,2) circle (1pt);
\filldraw [black] (-0.5,-0.5) circle (1pt);
\filldraw [black] (-1.5,0) circle (1pt);
\filldraw [black] (-2.5,0.5) circle (1pt);
\filldraw [black] (-3.5,1) circle (1pt);
\node at (-0.3,0) {$v$}; 
\node at (-0.3,-0.6) {$w$}; 
\node at (4.3,1.7) {$v_{a_v}$}; 
\node at (-3.5,1.2) {$w_{a_w}$}; 
\node at (1.2,0.3) {$v_{2}$}; 
\node at (-1.5,-0.3) {$w_{2}$}; 
\node at (0,0.6) {$E$}; 
\node at (-3.5,-0.8) {$F$}; 
\draw[->] (1,0.5) -- (0.1,0.05);
\draw[->] (2,1) -- (1.1,0.55);
\draw[dashed ](3,1.5) -- (2.1,1.05);
\draw[->] (4,2) -- (3.1,1.55);

\draw[->] (-1.5,0) -- (-0.6,-0.5);
\draw[dashed] (-2.5,0.5) -- (-1.5,0);
\draw[->] (-3.5,1) -- (-2.6,0.55);
\end{tikzpicture}
\caption{Illustration of the graph $F$}
\label{graphF}
\end{figure}
Define another graph $G$ as follows: 
$$
G^0=E^0\sqcup \{\overline{w}\}, \ \ G^1=E^1\sqcup \{e_{v,i}| \ i=1,...,a_v-1, v\in E^0, a_v\geq 2\}.
$$
The range and the source maps extend from $E$ to $G$ for the edges in $E^1$ and 
$$
s(e_{v,i})=\overline{w}, \ \ r(e_{v,i})=v. 
$$
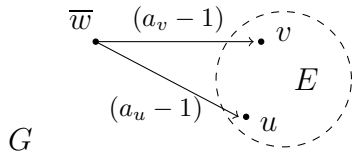
\begin{figure}[H]
\centering
\begin{tikzpicture}[scale=1]
\draw [dashed] (0,0) circle [radius=0.9 cm];
\filldraw [black] (-0.3,0.5) circle (1pt);
\filldraw [black] (-0.5,-0.5) circle (1pt);
\filldraw [black] (-2.5,0.5) circle (1pt);
\node at (0.3,0) {$E$}; 
\node at (-0.2,-0.6) {$u$}; 
\node at (0,0.6) {$v$}; 
\node at (-2.7,0.8) {$\overline{w}$}; 
\node at (-3.5,-0.8) {$G$}; 
\node at (-1.7,-0.4) {\footnotesize $(a_u-1)$}; 
\node at (-1.4,0.8) {\footnotesize $(a_v-1)$};

\draw[->] (-2.5,0.5) to (-0.6,-0.5);
\draw[->] (-2.5,0.5) to (-0.4,0.5);
\end{tikzpicture}
\caption{Illustration of the graph $G$}
\end{figure}
Then as $C^*$-algebras $C^*(F)\cong C^*(G)$. 
\end{lemma}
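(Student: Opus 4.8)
The plan is to produce the isomorphism directly from the universal property of graph $C^*$-algebras: I will build a Cuntz--Krieger $F$-family inside $C^*(G)$ and a Cuntz--Krieger $G$-family inside $C^*(F)$, obtain $*$-homomorphisms $\phi\colon C^*(F)\to C^*(G)$ and $\rho\colon C^*(G)\to C^*(F)$ from universality, and verify on generators that $\rho\phi=\mathrm{id}$ and $\phi\rho=\mathrm{id}$. Write $\{p_u,s_e\}$ for the canonical generators of $C^*(G)$, where $G^1=E^1\sqcup\{e_{v,i}\colon 1\le i\le a_v-1\}$ with $s(e_{v,i})=\overline w$, $r(e_{v,i})=v$, and $\{q_u,t_f\}$ for those of $C^*(F)$, where the head over $v$ is the directed path $v_{a_v}\to\cdots\to v_2\to v$ with edges $f^v_1,\dots,f^v_{a_v-1}$; here $f^v_1$ is the edge $v_2\to v$ and $f^v_j$ the edge $v_{j+1}\to v_j$. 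I may assume some $a_v\ge 2$, as otherwise $\overline w$ is isolated and the statement degenerates.

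The $F$-family in $C^*(G)$ is the heart of the construction. On the common subgraph $E$ I keep $P_v:=p_v$ and $S_e:=s_e$. The mutually orthogonal range projections $s_{e_{v,i}}s_{e_{v,i}}^*\le p_{\overline w}$ provide exactly the vertices of the head: I set $P_{v_j}:=s_{e_{v,j-1}}s_{e_{v,j-1}}^*$ for $2\le j\le a_v$, and define the chain isometries by $S_{f^v_1}:=s_{e_{v,1}}$ and $S_{f^v_j}:=s_{e_{v,j}}s_{e_{v,j-1}}^*$ for $j\ge 2$. Using $s_{e_{v,j}}^*s_{e_{v,j}}=p_v$ one checks $S_{f^v_j}^*S_{f^v_j}=P_{v_j}=P_{r(f^v_j)}$ and $S_{f^v_j}S_{f^v_j}^*=P_{v_{j+1}}$, so (CK1)--(CK3) hold at every head vertex (each emits a single edge), while at the vertices of $E$ these relations are inherited from $C^*(G)$, the emitted edges being unchanged. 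Orthogonality of ranges reduces to the vanishing of the middle factor $s_{e_{v,j}}^*s_{e_{v,k}}$ for distinct edges. Since $\overline w$ is not a vertex of $F$, no relation there is required, and universality yields $\phi$.

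Dually, for the $G$-family in $C^*(F)$ I keep $Q_v:=q_v$, $T_e:=t_e$ on $E$, and let $\mu_{v,j}:=f^v_{j-1}\cdots f^v_1$ be the path from $v_j$ to $v$. Since every head vertex emits one edge, $t_{\mu_{v,j}}^*t_{\mu_{v,j}}=q_v$ and $t_{\mu_{v,j}}t_{\mu_{v,j}}^*=q_{v_j}$. I set $Q_{\overline w}:=\sum_v\sum_{j=2}^{a_v}q_{v_j}$ and $T_{e_{v,i}}:=t_{\mu_{v,i+1}}$; then $T_{e_{v,i}}^*T_{e_{v,i}}=q_v$, the ranges $T_{e_{v,i}}T_{e_{v,i}}^*=q_{v_{i+1}}$ are orthogonal and sum to $Q_{\overline w}$, which is (CK3) at $\overline w$, the relations at $E$-vertices again being inherited. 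To close the loop I verify the two telescoping identities $\phi(T_{e_{v,i}})=S_{f^v_i}\cdots S_{f^v_1}=s_{e_{v,i}}$ (the product collapses via $s_{e_{v,j-1}}^*s_{e_{v,j-1}}=p_v$) and $\rho(S_{f^v_j})=t_{\mu_{v,j+1}}t_{\mu_{v,j}}^*=t_{f^v_j}$ (via $t_{\mu_{v,j}}t_{\mu_{v,j}}^*=q_{v_j}$), together with $\phi(Q_{\overline w})=\sum_{v,j}s_{e_{v,j-1}}s_{e_{v,j-1}}^*=p_{\overline w}$, where (CK3) at $\overline w$ in $G$ is used.

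The step I expect to be the real obstacle is injectivity. The elements encoding the head, e.g. $s_{e_{v,j}}s_{e_{v,j-1}}^*$, have gauge degree $0$, so $\phi$ is not equivariant for the canonical gauge actions and the gauge-invariant uniqueness theorem does not apply; moreover $E$ need not satisfy condition (L), so the Cuntz--Krieger uniqueness theorem is equally unavailable. This is exactly why the argument should not rely on any uniqueness theorem but instead exhibit the explicit inverse $\rho$, thereby reducing both injectivity and surjectivity of $\phi$ to the generator-level telescoping checks above. The remaining work is the routine but somewhat lengthy bookkeeping of the Cuntz--Krieger relations and range orthogonality for the two families.
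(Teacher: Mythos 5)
Your argument is correct in the regime where the lemma itself is true, and it takes a genuinely different route from the paper's. The paper never constructs a homomorphism by hand: it first applies the $R^+$-move of \cite[Definition 3.9, Theorem 3.10]{ER19} to the head vertices of $F$, one at a time, producing an intermediate graph $\overline{F}$ in which each head is replaced by $a_v-1$ sources, each emitting a single edge directly into $v$, so that $C^*(F)\cong C^*(\overline{F})$; it then observes that $\overline{F}$ is precisely the out-split of $G$ at $\overline{w}$ along the singleton partition of $s^{-1}(\overline{w})$, whence $C^*(G)\cong C^*(\overline{F})$ by \cite[Theorem 3.2]{BP04}. Your proof replaces this move machinery with explicit Cuntz--Krieger families and mutually inverse $*$-homomorphisms; the generator-level computations you outline (the (CK) relations for both families, range orthogonality, the telescoping identities, and $\phi(Q_{\overline{w}})=p_{\overline{w}}$) all check out, and your remark that neither the gauge-invariant nor the Cuntz--Krieger uniqueness theorem is available — so that exhibiting the inverse is what settles injectivity — is exactly right. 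What you gain is a self-contained argument with explicit formulas for the isomorphism; what the paper gains is brevity by quoting established isomorphism-preserving moves. One caveat, shared by both arguments and really by the statement itself, deserves emphasis: everything requires $\overline{w}$ to be a regular vertex of $G$, i.e. $1\le\sum_{v\in E^0}(a_v-1)<\infty$. You flag the lower bound, but when all $a_v=1$ the statement does not merely ``degenerate'': then $C^*(G)\cong C^*(F)\oplus\Cb$, which for finite $E$ is never isomorphic to $C^*(F)$. You do not flag the upper bound: if infinitely many $a_v\ge 2$, your $Q_{\overline{w}}$ is a norm-divergent sum and (CK3) is not imposed at the infinite emitter $\overline{w}$, so $\rho$ cannot be defined and $\phi$ cannot fix $p_{\overline{w}}$; in fact the lemma can genuinely fail there (take $E$ an infinite set of isolated vertices with all $a_v=2$: then $C^*(F)$ is a $c_0$-sum of copies of $M_2(\Cb)$, in which every projection dominates only finitely many mutually orthogonal nonzero projections, whereas $p_{\overline{w}}\in C^*(G)$ dominates infinitely many). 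The paper's out-split into singletons needs the same finiteness, and in the intended application $E$ is finite, so the caveat is harmless there — but your write-up, like the paper's, should state it.
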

\begin{proof}
Consider first the graph $F$, we apply the $R^+$-move, presented in \cite[Definition 3.9]{ER19}, on the regular vertices $v_i$, $i=2,...,a_v$ one by one for each $v\in F$. We then obtain a graph $\overline{F}$ defined as follows:
$$
\overline{F}^0=E^0\sqcup \{\tilde{v}_i| \ v\in E^0, i=2,...,a_v\}, \ \ 
\overline{F}^1=E^1\sqcup \{\tilde{e}^v_i| \ v\in E^0, i=2,...,a_v \}
$$
where $s(\tilde{e}^v_i)=\tilde{v}_i$ and $r(\tilde{e}^v_i)=v$. 
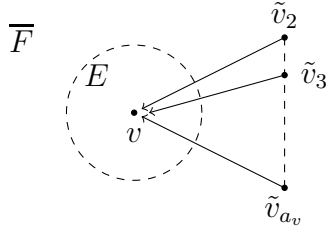
\begin{figure}[H]
\centering \begin{tikzpicture}[scale=1]
\draw [dashed] (0,0) circle [radius=0.9 cm];
\filldraw [black] (0,0) circle (1pt);
\filldraw [black] (2,1) circle (1pt);
\filldraw [black] (2,-1) circle (1pt);
\filldraw [black] (2,0.5) circle (1pt);
\node at (0,-0.3) {$v$}; 
\node at (2,1.3) {$\tilde{v}_2$}; 
\node at (2.4,0.5) {$\tilde{v}_{3}$}; 
\node at (2,-1.3) {$\tilde{v}_{a_v}$}; 
\node at (-0.5,0.5) {$E$}; 
\node at (-1.5,1) {$\overline{F}$}; 
\draw[->] (2,1) -- (0.1,0.05);
\draw[->] (2,-1) -- (0.1,-0.05);
\draw[dashed] (2,1) -- (2,-1);
\draw[->] (2,0.5) -- (0.2,0);
\end{tikzpicture}
\caption{Illustration of the graph $\overline{F}$}
\end{figure}
By \cite[Theorem 3.10]{ER19} $C^*(F)\cong C^*(\overline{F})$. 

For the graph $G$ we perform an out-split (\cite{BP04}, see also \cite[Definition 3.1]{ER19}) of the vertex $\overline{w}$ by partition $s^{-1}(\overline{w})$ into singleton sets. Then the out-split graph $G_O$ is precisely the graph $\overline{F}$. By \cite[Theorem 3.2]{BP04} $C^*(G)\cong C^*(G_O)$, hence $C^*(G)\cong C^*(F)$. 
\end{proof}

We are now ready to prove Theorem \ref{mainquantumlens}.

\begin{proof}[{Proof of Theorem \ref{mainquantumlens}}]
By Lemma \ref{MainLemma} and \eqref{corner} we obtain 
\vspace{0.2cm}
\\
\resizebox{\columnwidth}{!}{
$
\begin{aligned}
&C(L_q^{2n+1}(r;\underline{m})) \cong \left(\sum_{i=0}^n p_{(v_i,0)}\right)C^*(L_{2n+1}\times_{
c}\mathbb{Z}_r)\left(\sum_{i=0}^n p_{(v_i,0)}\right) \\
&= \left(\sum_{i=0}^n p_{(v_i,0)}\right)\left(\sum_{\substack{(v_i,k) \\ i=0,...,n, \  k\in S_i}} p_{(v_i,k)}\right)C^*(L_{2n+1}\times_{
c}\mathbb{Z}_r)\left(\sum_{\substack{(v_i,k) \\ i=0,...,n, \  k\in S_i}} P_{(v_i,k)}\right)\left(\sum_{i=0}^n p_{(v_i,0)}\right)
\\
&\cong \left(\sum_{i=0}^{n}p_{v_i^0}\right)C^*(\overline{L}_{2n+1}^{r;\underline{m}})\left(\sum_{i=0}^{n}p_{v_i^0}\right).
\end{aligned}
$}
\vspace{0.2cm}
\\
Hence, it suffices to prove that 
\begin{equation}\label{corner2}
\left(\sum_{i=0}^{n}p_{v_i^0}\right)C^*(\overline{L}_{2n+1}^{r;\underline{m}})\left(\sum_{i=0}^{n}p_{v_i^0}\right) \cong C^*(\overline{L}_{2n+1}^{r;\underline{m}}). 
\end{equation}
We will follow the procedure in the proof of \cite[Theorem 4.8(1)]{AE15} to construct a graph for which the corner in \eqref{corner2} is isomorphic to its graph $C^*$-algebra. For proofs of the statement used in the construction we refer to \cite[Theorem 4.8(1)]{AE15}. 

For simplicity we let $E:=\overline{L}_{2n+1}^{r;\underline{m}}$ and $P:=\sum_{i=0}^{n}p_{v_i^0}$. Let $SE$ be the graph for which an infinite head has been added to each vertex. $SE$ is called the stabilisation of $E$. 
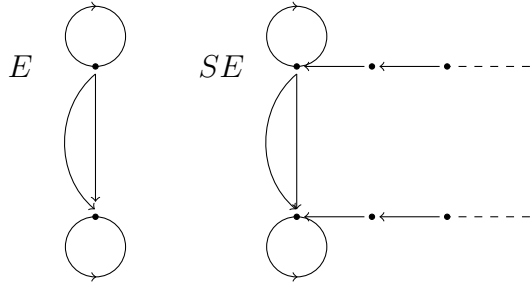
\begin{figure}[H]
\centering
\begin{tikzpicture}[scale=1]

\filldraw [black] (10,0) circle (1pt);
\draw (10,0.4) circle [radius=0.4cm];
\draw [->] (9.99,0.8) -- (10.01,0.8);

\draw (10,-2.4) circle [radius=0.4cm];
\draw [->] (9.99,-2.8) -- (10.01,-2.8);
\filldraw [black] (10,-2) circle (1pt);

\draw[->] (10,-0.1) to (10,-1.8);

\draw[->] (10,-0.1) to [out=-140,in=140] (9.99,-1.9);
%
\draw node at (9,0) { $E$};
\end{tikzpicture}
\ \ \ \ \ 
\begin{tikzpicture}[scale=1]

\filldraw [black] (10,0) circle (1pt);
\draw (10,0.4) circle [radius=0.4cm];
\draw [->] (9.99,0.8) -- (10.01,0.8);

\draw (10,-2.4) circle [radius=0.4cm];
\draw [->] (9.99,-2.8) -- (10.01,-2.8);
\filldraw [black] (10,-2) circle (1pt);

\draw[->] (10,-0.1) to (10,-1.9);

\draw[->] (10,-0.1) to [out=-140,in=140] (9.99,-1.9);

\draw node at (9,0) { $SE$};

\filldraw [black] (11,0) circle (1pt);
\draw[->] (10.9,0) -- (10.1,0);
\filldraw [black] (12,0) circle (1pt);
\draw[->] (11.9,0) -- (11.1,0);
\draw[dashed] (13.1,0) to (12,0); 

\filldraw [black] (11,-2) circle (1pt);
\draw[->] (10.9,-2) -- (10.1,-2);
\filldraw [black] (12,-2) circle (1pt);
\draw[->] (11.9,-2) -- (11.1,-2);
\draw[dashed] (13.1,-2) to (12,-2	); 

\end{tikzpicture}
\caption{The stabilisation when $E=\overline{L}_{3}^{4;(2,1)}$}
\end{figure}
There exists an isomorphism $\phi: C^*\left(E\right)\otimes\mathcal{K}\to C^*(SE)$ such that 
$$K_0(\phi)([p_w\otimes e_{11}])=[p_w]$$
for all $w\in E^0$, where $\{e_{ij}\}$ is a set of matrix units in $\mathcal{K}$, \cite[Proposition 9.8]{AT11}. Then
$$
\begin{aligned}
PC^*(E)P\cong 
(P\otimes e_{11})(C^*(E)\otimes \mathcal{K})(P\otimes e_{11})\cong \phi(P\otimes e_{11})C^*(SE)\phi(P\otimes e_{11}). 
\end{aligned}
$$
Since $P$ is full by Lemma \ref{fullprojection}, it follows by the proof of Proposition 4.7 in \cite{AE15} that there exists a finite hereditary subset $T$ of $(SE)^0$, which contains $E^0$, such that $\phi(P\otimes e_{11})$ is Murray-von Neumann equivalent to 
$$
P_T:=\sum_{v\in T} p_v. 
$$
We obtain $T$ as follows: By an application of \cite[Lemma 4.3]{AE15} there exist integers $a_v\geq 1$ such that 
$$
[P]=\sum_{v\in E^0} a_v[p_v],
$$
which follows since there is a path from $v_0^0$ to any $v_i^k$ by Lemma \ref{existpath}. 
For a vertex $v\in E^0$, we denote the first $a_{v}-1$ vertices in the infinite head added to $v$ as follows:
\begin{center}
\begin{tikzpicture}[scale=1]
\draw [dashed] (0,0) circle [radius=0.9 cm];
\filldraw [black] (0,0) circle (1pt);
\filldraw [black] (1,0.5) circle (1pt);
\filldraw [black] (2,1) circle (1pt);
\filldraw [black] (3,1.5) circle (1pt);
\filldraw [black] (4,2) circle (1pt);
\node at (0,-0.3) {$v$}; 
\node at (0.9,0.8) {$v_2$}; 
\node at (2,1.3) {$v_3$}; 
\node at (3.5,1.25) {$v_{a_{v}-1}$}; 
\node at (4.3,1.7) {$v_{a_v}$}; 
\node at (-0.5,0.5) {$E$}; 
\draw[->] (1,0.5) -- (0.1,0.05);
\draw[->] (2,1) -- (1.1,0.55);
\draw[dashed ](3,1.5) -- (2.1,1.05);
\draw[->] (4,2) -- (3.1,1.55);
\draw[dashed] (5,2.5) -- (4,2);
\end{tikzpicture}
\end{center}
Let $v_1=v$ and denote by $e_k$ the edge from $v_k$ to $v_{k-1}$ for $k=2,...,a_v$. 
By the Cuntz-Krieger relations it follows that 
$$
p_{v_{k}}=s_{e_{k}}s_{e_{k}}^*, \ \ p_{v_{k-1}}=s_{e_{k}}^*s_{e_k},
$$
for $k=2,...,a_v$. 
Hence, $[p_{v_k}]=[p_{v_{k-1}}]$ for $k=2,...,a_v$ from which it follows that $[p_{v_{j}}]=[p_v]$ for $j=2,...,a_v$ and 
$$
a_v[p_v]= [p_v]+\sum_{i=2}^{a_v} [p_{v_i}].
$$
Let
$$
T:=E^0 \sqcup \{v_k| \ v\in E^0, a_v\geq 2, k=2,...,a_v\},
$$
then we obtain 
$$
[\phi(P\otimes e_{11})]=[P]=\sum_{v\in E^0} a_v[p_v]= \sum_{v\in E^0}\left([p_v]+\sum_{i=2}^{a_v}[p_{v_i}]\right)=[P_T]. 
$$
Then 
$$
\phi(P\otimes e_{11})C^*(SE)\phi(P\otimes e_{11})\cong P_TC^*(SE)P_T. 
$$ 
By \cite[Theorem 3.15]{AE15} $ P_TC^*(SE)P_T\cong C^*(F)$ where $F=(T,s_{SE}^{-1}(T),r_{SE},s_{SE})$. The graph $F$ consists then of the graph $E$ where for each $v\in E^0$ there is added a head consisting of $a_v-1$ vertices (see Figure \ref{graphF}). We obtain by Lemma \ref{graphsisom} that $C^*(F)$ is isomorphic to $C^*(G)$. Hence, it remains to show that $C^*(G)\cong C^*(E)= C^*(\overline{L}_{2n+1}^{r;\underline{m}})$. 

For this we apply \cite[Theorem 14.6]{errs2} on the graphs $E$ and $G$. First note that $E=\widetilde{E}=\widetilde{G}$, $\underline{x}_E$ is the zero vector of size $|E^0|$ and $\underline{x}_G$ is a vector of size $|E_0|$ which indicates the number of edges from $\overline{w}$ to each vertex in $E^0$. Note that the first entry is $0$, since there are no edges from $\overline{w}$ to $v_0^0$. 

The components of ${E}$ consist of singleton sets. They are all cyclic since every vertex is the base of precisely one loop. 
Recall that the graph $E=\overline{L}_{2n+1}^{r;\underline{m}}$ consists of $n+1$ levels for which we in level $k$ have $|S_k|$ vertices. We denote the vertices in $E^0$ as indicated in Figure \ref{namevertices}. 
\begin{figure}[H]
\centering
\begin{tikzpicture}[scale=0.6]
{\small 
\filldraw [black] (0,-0.5) circle (2pt);
\node at (0,-0.8) {$v_1$};
\node at (-2,-0.5) {Level 0};

\filldraw [black] (0,-2.5) circle (2pt);
\node at (0,-2.8) {$v_2$};
\node at (-2,-2.5) {Level 1};
\draw[dotted] (2,-2.5) to (6,-2.5); 

\filldraw [black] (2,-2.5) circle (2pt);
\node at (2,-2.8) {$v_3$};

\filldraw [black] (6,-2.5) circle (2pt);
\node at (6,-2.8) {$v_{|S_1|+1}$};

\draw[dotted] (2,-4.5) to (6,-4.5); 

\node at (-2,-4.5) {Level 2};
\filldraw [black] (0,-4.5) circle (2pt);
\node at (0,-4.8) {$v_{|S_1|+2}$};

\filldraw [black] (2,-4.5) circle (2pt);
\node at (2,-4.8) {$v_{|S_1|+2}$};

\filldraw [black] (6,-4.5) circle (2pt);
\node at (6,-4.8) {$v_{|S_1|+|S_2|+1}$};

\draw[dotted] (0,-5.3) to (0,-6); 
\draw[dotted] (2,-5.3) to (2,-6);
\draw[dotted] (6,-5.3) to (6,-6); }
\end{tikzpicture}
\caption{Renaming of the vertices in $\overline{L}_{2n+1}^{r;\underline{m}}$}
\label{namevertices}
\end{figure}
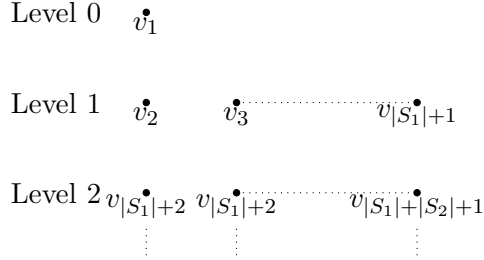

A partial order on the set $\mathcal{P}=\{1,2,...,N\}$ with $N:=\sum_{i=0}^{n} |S_i|$ is defined as follows: For $i,j\in \mathcal{P}$ we let $i\preceq j$ if there is a path from $v_i$ to $v_j$. Let $\gamma_i:=\{v_i\}\in \Gamma_E$ then the map $i\mapsto \gamma_i$ is clearly an order reversing isomorphism from $\mathcal{P}$ to $\Gamma_E$. Furthermore, if $i \preceq j$ then $i\leq j$ which is required. Then $B_{E}=B_{\widetilde{E}}=B_{\widetilde{G}}$ are contained in $\mathfrak{M}^{\circ\circ\circ}_{\mathcal{P}}(\boldsymbol{1},\Zb)$ (see definition in \cite[p.321]{errs}) and 
$$
V_{c_2,...,c_{|E_0|}}=\begin{pmatrix}
1 & c_2 & \cdots & c_{|E^0|} \\
0 & & & \\
\vdots &  & I &  \\
0 &  & & 
\end{pmatrix}
$$
lies inside $\SL_{\mathcal{P}}(\boldsymbol{1},\Zb)$ for any $c_j\in\mathbb{Z}$, since there is a path from $v_0^0$ to every other vertex by Lemma \ref{existpath}. Moreover, we have $B_{\widetilde{E}}V_{c_2,...,c_{|E_0|}}=B_{\widetilde{G}}$ and $\underline{x}_G=\begin{pmatrix}
0 & k_2 & \cdots & k_{|E^0|}
\end{pmatrix}^T
$
with $k_i=a_{v_i}-1$. Then
$$
V_{k_2,...,k_{|E_0|}}^T(\underline{1}+\underline{x}_E)=\begin{pmatrix}
1 \\ k_2+1 \\ \vdots \\ k_{|E_0|}+1 
\end{pmatrix}
=\underline{1}+\underline{x}_G.
$$
Hence, $V_{k_2,...,k_{|E_0|}}^T(\underline{1}+\underline{x}_E)-(\underline{1}+\underline{x}_G)=\underline{0}$ which is clearly in the image of $B_{\widetilde{F}}^T$. Then by letting $U=P=I$ in \cite[Theorem 14.6]{errs2} we obtain $C^*(G)\cong C^*(E)$. 

To summarize we have shown 
$$
\begin{aligned}
C^*(L_{2n+1}^{r;\underline{m}})&\cong C^*(G)\cong C^*(F)\cong P_TC^*(SE)P_T \\
&\cong \phi(P\otimes e_{11})C^*(SE)\phi(P\otimes e_{11}) \\
&\cong PC^*(L_{2n+1}^{r;\underline{m}})P  \cong C(L_q^{2n+1}(r;\underline{m}))
\end{aligned}
$$
which proves the theorem. 
\end{proof}

\section{A classification result of quantum lens spaces}\label{section:classificationQLS}
We will in this section investigate quantum lens spaces $C(L_q^7(r,\underline{m}))$ for which $\gcd(m_\ell,r)=K$ for a single $\ell \in \lbrace 0,1,2,3 \rbrace$ and the remaining weights are coprime to $r$. In the process of finding an invariant for 7-dimensional quantum lens spaces we will also be able to calculate one for quantum lens spaces of dimension 5. We will in the following therefore have our focus on 7-dimensional quantum lens spaces. 

Under the above assumptions on the weights the skew product graph $L_{7}\times_{c}\Z_r$ consists of four levels, labeled by level 0,1,2 and 3, with edges going from level $i$ to $j$ if $i<j$. At the level on which $\gcd(m_\ell,r)=K$ we have $K$ cycles based on each of the vertices $(v_\ell,k), k=0,1,...,K-1$.
The graph $\overline{L}_7^{r;\underline{m}}$ consists of four levels as before with $4$ vertices when $\gcd(m_0,r)=K$ and $K+3$ vertices when $\gcd(m_i,r)=K, i\neq 0$, which are all the base of a loop. There is one vertex in each level except for level $i$ where $\gcd(m_i,r)=K\neq 1, i\neq 0$, here we have $K$ vertices. When $i=0$ we have one vertex in each level. There is at least one edge going from a lower level to a higher one, but there are no edges between vertices at the same level. We will denote the vertices by $v_i, i=1,...,K+3$ as indicated in Figure \ref{namevertices}.  


 The partial order on $\Gamma_{\overline{L}_7^{r;\underline{m}}}$ is given as follows: let $\gamma_i:=\{v_i\}$ then $\gamma_j\leq \gamma_i$ if there is a path from $v_i$ to $v_j$. 
The set $\Gamma_{\overline{L}_7^{r;\underline{m}}}$ can be illustrated by its component graphs, which are depicted in Figure \ref{Fig:Component}. In these graphs, an arrow from $\gamma_i$ to $\gamma_j$ indicates that $\gamma_i\geq \gamma_j$. 
\begin{figure}[H]
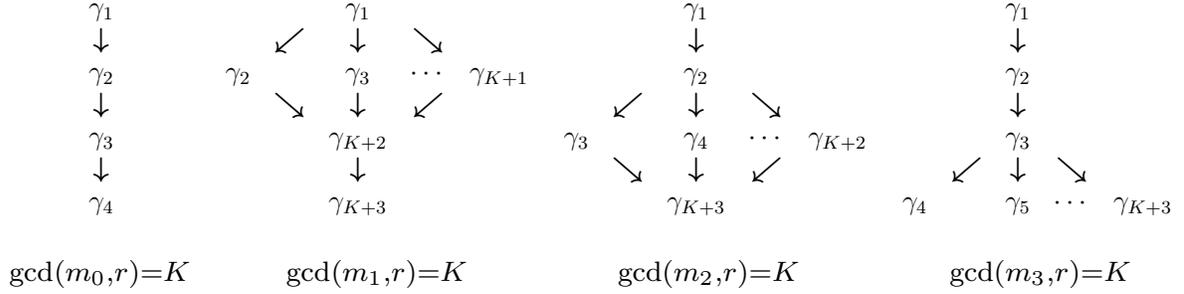

\resizebox{\columnwidth}{!}{
$
\hspace{1cm}
\begin{matrix}
 &  & \gamma_1 &  \\
 & & \boldsymbol{\downarrow}&  &\\
& & \gamma_{2} & & \\
& & \boldsymbol{\downarrow} & & \\
& & \gamma_{3} & &\\
 & & \boldsymbol{\downarrow} & &\\
& & \gamma_{4} & & 
\end{matrix}
\hspace{1cm}
\begin{matrix}
& & \gamma_{1} & & \\
& \boldsymbol{\swarrow}  & \boldsymbol{\downarrow}  & \boldsymbol{\searrow} & \\
\gamma_{2} & & \gamma_3 & \cdots & \gamma_{K+1} \\
& \boldsymbol{\searrow} & \boldsymbol{\downarrow}  & \boldsymbol{\swarrow} & \\
& & \gamma_{K+2} & &  \\
& & \boldsymbol{\downarrow}  & & \\
& & \gamma_{K+3} & & \\
\end{matrix}
\ \ \ \
\begin{matrix}
& & \gamma_{1} & & \\
& & \boldsymbol{\downarrow}  & & \\
& & \gamma_{2} & & \\
& \boldsymbol{\swarrow}  & \boldsymbol{\downarrow}  & \boldsymbol{\searrow} & \\
\gamma_{3} & & \gamma_4 & \cdots & \gamma_{K+2} \\
& \boldsymbol{\searrow} & \boldsymbol{\downarrow}  & \boldsymbol{\swarrow} & \\
& & \gamma_{K+3} & & 
\end{matrix}
\ \ \ \
\begin{matrix}
& & \gamma_1 & & \\
& & \boldsymbol{\downarrow} & & \\
& & \gamma_2 & &\\
 & & \boldsymbol{\downarrow} & &\\
& & \gamma_3 & & \\ 
& \boldsymbol{\swarrow} & \boldsymbol{\downarrow}  & \boldsymbol{\searrow} &\\ 
\gamma_4 & & \gamma_5 & \cdots & \gamma_{K+3}
\end{matrix}
$}
\vspace{0.3cm}
\\
\resizebox{\columnwidth}{!}{$
\ \ \ {\scriptstyle \gcd(m_0,r)=K} \ \ \ \ \ \ \  {\scriptstyle \gcd(m_1,r)=K} \ \ \ \ \ \ \ \ \ \ \ {\scriptstyle \gcd(m_2,r)=K} \ \ \ \ \ \ \ \ \ \ \ {\scriptstyle \gcd(m_3,r)=K } \ \ \
$}
\caption{Component graphs of 7-dimensional quantum lens spaces}
\label{Fig:Component}
\end{figure}

For $\gcd(m_0,r)=K$ we let $\mathcal{P}=\{1,2,3,4\}$ and the partial ordering is linear. 
When $\gcd(m_i,r)=K, i\neq 0$ we let $\mathcal{P}=\{1,2,...,K+3\}$ and $\ell$ be such that $\gcd(m_{\ell-1},r)=K$. We define a partial order, $\preceq$, on $\mathcal{P}$ by:
\begin{itemize}
    \item $\ell-1\succeq...\succeq 1$,
    \item $i\succeq \ell-1$ for $i=\ell,..,K+\ell-1$,
    \item $K+\ell\succeq i$ for $i=\ell,...,K+\ell-1$,
    \item $K+3 \succeq ... \succeq K+\ell$. 
\end{itemize}
The partial order satisfies that if $i\preceq j$ then $i\leq j$ which is the required assumption \cite[Assumption 4.3]{errs}.
It can easily be seen that there exists an order reversing isomorphism $\gamma_{B_{\overline{L}_7^{r;\underline{m}}}}:\mathcal{P}\to \Gamma_{\overline{L}_7^{r;\underline{m}}}$ mapping $i$ to $\gamma_i$. 

Let $\gcd(m_i,r)=K$ for one $i$. It follows immediately that $|\text{Prime}_\gamma(C^*(\overline{L}_7^{r;\underline{m}}))|=K+3$ when $i\neq 0$ and $|\text{Prime}_\gamma(C^*(\overline{L}_7^{r;\underline{m}}))|=4$ if $i=0$. By this result we obtain non isomorphic quantum lens spaces for different values of $K$ when $i\neq 0$. Moreover, by Remark \ref{remark:rinvariant} it follows that $K$ must also be the same in order to obtain isomorphic quantum lens spaces when $i=0$ even though this is not immediately clear by considering the ideal structure.

Fix a $K>1$, by considering the ideal structure we obtain four different isomorphism classes of quantum lens spaces, one for each $i=0,1,2,3$ for which $\gcd(m_i,r)=K$. We will in this section determine when two quantum lens spaces inside each of these four classes are isomorphic. 
Similarly we have three different classes of quantum lens spaces of dimension $5$ to investigate. 

To determine whether two quantum lens spaces in the same class are isomorphic we will make use of \cite[Theorem 7.1]{errs}. For the quantum lens spaces we are investigating, the block structure consists of $1\times 1$-matrices and $(B_{\overline{L}_7^{r;\underline{m}}},B_{\overline{L}_7^{r;\underline{n}}})$ is in standard form for two quantum lens spaces in the same class when we order the vertices in the adjacency matrix as described above. Since $\overline{L}_{2n+1}^{r;\underline{m}}$ contains no sinks Theorem \ref{Theorem:Classification} boils down to: 
\begin{corollary}\label{lensclassification}
Let $\underline{m}=(m_0,m_1,m_2,m_3)$ and $\underline{n}=(n_0,n_1,n_2,n_3)$ be in $\mathbb{N}^4$. If $\gcd(m_i,r)=\gcd(n_i,r)$ for each $i=0,1,2,3$ then $C^*(\overline{L}_7^{r;\underline{m}})$ and $C^*(\overline{L}_7^{r;\underline{n}})$ are isomorphic if and only if there exists matrices $U,V\in SL_{\mathcal{P}}(\mathbf{1},\Zb)$ such that $UB_{\overline{L}_7^{r;\underline{m}}}V=B_{\overline{L}_7^{r;\underline{n}}}$. 
\end{corollary}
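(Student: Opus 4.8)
The plan is to obtain this statement as a direct specialization of Theorem \ref{Theorem:Classification} to the pair $E=\overline{L}_7^{r;\underline{m}}$ and $F=\overline{L}_7^{r;\underline{n}}$, so the work consists of checking that both hypotheses of that theorem are met and that its conclusion simplifies as claimed. First I would record that both graphs are finite and that, by the structural description given above, every vertex of $\overline{L}_7^{r;\underline{m}}$ is the base of precisely one loop. In particular no vertex supports two distinct return paths, so these graphs lie within the scope of Theorem \ref{Theorem:Classification}; and since each vertex carries a loop, neither graph has a sink. The latter observation is what removes the $\curlywedge$ from the statement: adjoining a loop to every sink leaves the graph unchanged when there are no sinks, so ${B_E}_{\curlywedge}=B_E$ and ${B_F}_{\curlywedge}=B_F$, whence the equation $U{B_E}_{\curlywedge}V={B_F}_{\curlywedge}$ becomes $UB_{\overline{L}_7^{r;\underline{m}}}V=B_{\overline{L}_7^{r;\underline{n}}}$.

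The remaining and main task is to verify that $(B_{\overline{L}_7^{r;\underline{m}}},B_{\overline{L}_7^{r;\underline{n}}})$ is in standard form, for which I would use the hypothesis $\gcd(m_i,r)=\gcd(n_i,r)$ for all $i$ in an essential way. This equality forces both weight sequences to have their single non-coprime entry in the same position $\ell$ and with the same value $K$ of the greatest common divisor; by the explicit description of $\overline{L}_7^{r;\underline{m}}$ recalled above (and the component graphs in Figure \ref{Fig:Component}) the number of vertices, their partition into levels, and the associated partially ordered set $\mathcal{P}$ therefore coincide for the two graphs. This gives the two adjacency matrices the same size and, with the common ordering of vertices fixed earlier, the same $1\times 1$ block structure, placing both in $\mathfrak{M}^\circ_{\mathcal{P}}(\boldsymbol{1},\Z)$.

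To complete the standard-form check I would verify that the two graphs have the same temperatures, i.e.\ the same types of gauge-simple subquotients. Here the one-loop-per-vertex structure makes the argument immediate: each strongly connected component of either graph is a single vertex supporting exactly one return path, so every element of $\Gamma_E$ and of $\Gamma_F$ is a cyclic component and all gauge-simple subquotients are of the same (cyclic) type. Combined with the matching block structure, this shows $(B_{\overline{L}_7^{r;\underline{m}}},B_{\overline{L}_7^{r;\underline{n}}})$ is in standard form, and Theorem \ref{Theorem:Classification} then yields the stated equivalence.

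The step I expect to require the most care is the standard-form verification, and within it the bookkeeping that the vertex set, the level decomposition, and the partial order $\mathcal{P}$ genuinely depend only on the data $\{\gcd(m_i,r)\}_{i=0}^3$ rather than on the weights themselves; once this is pinned down from the explicit construction of $\overline{L}_7^{r;\underline{m}}$, the temperature condition and the removal of $\curlywedge$ are essentially formal.
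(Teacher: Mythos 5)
Your proposal is correct and follows essentially the same route as the paper: the paper's own justification is the brief paragraph preceding the corollary, which likewise invokes Theorem \ref{Theorem:Classification} after noting that the block structure is $1\times 1$, that the pair $(B_{\overline{L}_7^{r;\underline{m}}},B_{\overline{L}_7^{r;\underline{n}}})$ is in standard form for weights with matching gcds (under the section's standing assumption that exactly one weight fails to be coprime to $r$), and that the absence of sinks makes the $\curlywedge$ operation trivial. Your write-up simply makes explicit the checks the paper leaves implicit (one loop per vertex, hence type I and cyclic temperatures; identical vertex/level/poset data from the gcd hypothesis), so there is no substantive difference in approach.
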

We remark that the natural generalisation of Corollary \ref{lensclassification} to quantum lens spaces of other dimensions is true, if one defines the partial order in the obvious way. In particular, for dimension $5$ we have a similar result by letting $\mathcal{P}=\{1,2,...,K+2\}$ and defining the order in a similar way as the one for dimension $7$.
\\

Eilers, Restorff, Ruiz and Sørensen used Corollary \ref{lensclassification}, with $\mathcal{P}=\{1,2,3,4\}$ ordered linearly to completely classify the simplest case: 
\begin{theorem}\label{errsLens}\cite[Theorem 7.8]{errs}
Let $r\in \Nb$, $r\geq 2$ and let $\underline{m}=(m_0,m_1,m_2,m_3)$ and $\underline{n}=(n_0,n_1,n_2,n_3)$ be in $\Nb^4$ such that $\gcd(m_i,r)=\gcd(n_i,r)=1$ for all i. Then $C^*\left(\overline{L}_7^{(r,\underline{m})}\right)\cong C^*\left(\overline{L}_7^{(r,\underline{n})}\right)$ if and only if 
\[(n_2^{-1}n_1-m_2^{-1}m_1)\frac{r(r-1)(r-2)}{3}\equiv 0 \Mod{r}.\]
\end{theorem}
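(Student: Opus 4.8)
The plan is to convert the statement, via Corollary \ref{lensclassification}, into a question about $\SL_{\mathcal{P}}(\mathbf{1},\Zb)$-equivalence of two explicit $4\times 4$ integer matrices, and then to extract the complete invariant of that equivalence. Since every weight is coprime to $r$ we have $S_i=\{0\}$ for all $i$, so each of $\overline{L}_7^{r;\underline{m}}$ and $\overline{L}_7^{r;\underline{n}}$ has exactly four vertices $v_0^0,\dots,v_3^0$, one per level, each carrying a single loop, and $\mathcal{P}=\{1,2,3,4\}$ is linearly ordered. Hence $B:=B_{\overline{L}_7^{r;\underline{m}}}$ is strictly upper triangular, its $(i,j)$-entry ($i<j$) being the number $n_{ij}^{00}$ of admissible paths from $(v_i,0)$ to $(v_j,0)$, and by Corollary \ref{lensclassification} the two lens spaces are isomorphic iff $B_{\underline{m}}$ and $B_{\underline{n}}$ are $\SL_{\mathcal{P}}(\mathbf{1},\Zb)$-equivalent.

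Next I would count the admissible paths. Inside level $i$ the loop advances the $\Zb_r$-coordinate by $m_i$, and coprimality forces a level-$i$ excursion to meet the zero-coordinate only after exactly $r$ steps, while admissibility forbids the interior of a path from meeting any $(v_\ell,0)$. Bookkeeping the entry and exit coordinates level by level then gives: each superdiagonal entry equals $r$; each second-superdiagonal entry equals $\tfrac{r(r-1)}{2}$ and is independent of the weights (the relevant reindexing is a bijection of the nonzero residues); and, writing $\mu=m_2^{-1}m_1\Mod r$, the corner entry collapses to a single sum depending only on $\mu$,
$$
b_{14}=\sum_{s=1}^{r-1}(s-1)\big[(-s\mu)\bmod r\big].
$$
The key reduction is that modulo $r$ one may discard the reduction brackets, so that $b_{14}\equiv-\mu\sum_{s=1}^{r-1}(s^2-s)\equiv-\mu\,\tfrac{r(r-1)(r-2)}{3}\Mod r$ by the standard evaluation of $\sum(s^2-s)$.

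To read off the invariant, recall that for a linear order $\SL_{\mathcal{P}}(\mathbf{1},\Zb)$ is the full unipotent upper-triangular group, acting by adding a higher-indexed row to a lower-indexed one and a lower-indexed column to a higher-indexed one. These operations fix the superdiagonal; the entries $b_{13},b_{24}$ are weight-independent and hence already agree for $\underline{m}$ and $\underline{n}$; and the only remaining freedom lies in $b_{14}$, which can be shifted while fixing all other entries by $R_1\mapsto R_1+cR_3$ (shift by $c\,b_{34}=cr$) and by $R_1\mapsto R_1+cR_2$ followed by $C_3\mapsto C_3-cC_2$ (shift by $c\,b_{24}=c\tfrac{r(r-1)}{2}$, with $b_{13}$ restored). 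Thus the residual invariant is $b_{14}$ modulo the subgroup generated by $r$ and $\tfrac{r(r-1)}{2}$, i.e. modulo $r$ for odd $r$ and modulo $\tfrac r2$ for even $r$. Isomorphism is therefore equivalent to $b_{14}(\underline{m})\equiv b_{14}(\underline{n})$ to this modulus, which by the previous paragraph is exactly $(n_2^{-1}n_1-m_2^{-1}m_1)\tfrac{r(r-1)(r-2)}{3}\equiv 0$ to that modulus.

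Finally I would reconcile the two parities so as to obtain the uniform ``$\Mod r$'' formulation: for even $r$ the $2$-adic valuation satisfies $v_2\!\left(\tfrac{r(r-1)(r-2)}{3}\right)=v_2(r)+v_2(r-2)>v_2(r)$, so $\tfrac r2$ never lies in the image of $D\mapsto D\,\tfrac{r(r-1)(r-2)}{3}\bmod r$, whence the mod-$\tfrac r2$ and mod-$r$ conditions coincide and the stated congruence covers both cases. I expect the main obstacle to be the path count of the second paragraph — organizing the admissible-path bookkeeping into the single closed form for $b_{14}$ and verifying that all weight dependence truly collapses to $\mu=m_2^{-1}m_1$ (with $m_0,m_3$ washing out at the endpoints) — together with the precise determination of the residual subgroup, since a miscalculation of the shift by $\tfrac{r(r-1)}{2}$ or of the $2$-adic reconciliation would alter the modulus in the final congruence.
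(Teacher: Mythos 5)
Your proposal is correct and takes essentially the same route as the proof this theorem is imported from (\cite{errs}; the present paper states it without proof, but its sections \ref{adjacencymatrix} and \ref{invariant} run exactly this strategy for the generalisations): reduction to $\SL_{\mathcal{P}}(\mathbf{1},\Zb)$-equivalence via Corollary \ref{lensclassification} with $\mathcal{P}=\{1,2,3,4\}$ linearly ordered, explicit admissible-path counts, and extraction of the corner entry modulo the subgroup generated by $r$ and the weight-independent entry $b_{24}$. Two slips, both harmless: the second-superdiagonal entries are $\tfrac{r(r+1)}{2}$, not $\tfrac{r(r-1)}{2}$ (you dropped the $r$ one-step paths), and your formula for $b_{14}$ is only the three-step count (the one- and two-step paths contribute the further weight-independent amount $r+r(r-1)=r^2$); since both discrepancies are multiples of $r$, the subgroup $\langle r,b_{24}\rangle$ and the resulting congruence are unchanged, and your $2$-adic argument reconciling the even-$r$ modulus $\tfrac{r}{2}$ with the stated modulus $r$ is correct.
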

From the above they concluded: 
\begin{corollary}\cite[Corollary 7.9]{errs}
If $3$ does not divide $r$ then 
\[
C^*\left(\overline{L}_7^{(r,\underline{m})}\right)\cong C^*\left(\overline{L}_7^{(r,(1,1,1,1))}\right)
\]
for all $\underline{m}\in \Nb^4$ with $\gcd(m_i,r)=1$. 

If $3$ divides $r$ and $\underline{m}=(m_0,m_1,m_2,m_3)\in\Nb^4$ with $\gcd(m_i,r)=1$ then 
\begin{itemize}
    \item[(i)] $C^*\left(\overline{L}_7^{(r,\underline{m})}\right)\cong C^*\left((\overline{L}_7^{(r,(1,1,1,1))}\right)$ if and only if $m_1\equiv m_2 \Mod{3}$, 
    \item[(ii)] $C^*\left(\overline{L}_7^{(r,\underline{m})}\right)\cong C^*\left(\overline{L}_7^{(r,(1,1,r-1,1))}\right)$ if and only if $m_1\not\equiv m_2 \Mod{3}$. 
\end{itemize}
\end{corollary}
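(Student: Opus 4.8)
The plan is to deduce the corollary directly from the isomorphism criterion of Theorem~\ref{errsLens}, reducing everything to an elementary analysis of the arithmetic factor $N:=\frac{r(r-1)(r-2)}{3}$ modulo $r$. That criterion states that $C^*(\overline{L}_7^{(r,\underline{m})})\cong C^*(\overline{L}_7^{(r,\underline{n})})$ precisely when $(n_2^{-1}n_1-m_2^{-1}m_1)N\equiv 0 \Mod{r}$, so the corollary amounts to evaluating this congruence at the two reference weight vectors $(1,1,1,1)$ and $(1,1,r-1,1)$, after first pinning down $N \bmod r$.

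First I would record that $N$ is an integer, since the product of three consecutive integers is divisible by $3$, and that consequently $3N=r(r-1)(r-2)\equiv 0\Mod{r}$. If $3\nmid r$, then $3$ is invertible modulo $r$, so $3N\equiv 0$ forces $N\equiv 0\Mod{r}$; the criterion of Theorem~\ref{errsLens} then holds for \emph{every} pair of weight vectors that are units mod $r$, and in particular each such $\underline{m}$ is equivalent to $(1,1,1,1)$. This establishes the first assertion.

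For the case $3\mid r$ I would write $r=3s$, so that $N=s(r-1)(r-2)\equiv s\cdot(-1)(-2)=2s\Mod{r}$. Substituting into the criterion gives $2s(n_2^{-1}n_1-m_2^{-1}m_1)\equiv 0\Mod{3s}$; cancelling $s$ and using $\gcd(2,3)=1$ reduces this to $n_2^{-1}n_1\equiv m_2^{-1}m_1\Mod{3}$. Because $3\mid r$, reduction modulo $3$ carries an inverse mod $r$ to the genuine inverse mod $3$, and $\gcd(m_i,r)=1$ guarantees $m_i\not\equiv 0\Mod{3}$, so each relevant $m_i$ lies in $\{1,2\}$ mod $3$. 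Specialising $\underline{n}=(1,1,1,1)$ gives $n_2^{-1}n_1=1$ and hence the condition $m_1\equiv m_2\Mod{3}$ of part (i); specialising $\underline{n}=(1,1,r-1,1)$ gives $r-1\equiv 2$ and $2^{-1}\equiv 2\Mod{3}$, so the condition becomes $m_2^{-1}m_1\equiv 2\Mod{3}$, which over the residues $\{1,2\}$ is exactly $m_1\not\equiv m_2\Mod{3}$, yielding part (ii).

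There is no deep obstacle here, as the substantive content is entirely contained in Theorem~\ref{errsLens}. The two points that require care are the computation of $N\bmod r$, which genuinely bifurcates according to whether $3$ divides $r$, and the verification that the multiplicative inverses taken modulo $r$ descend correctly to inverses modulo $3$; once these are handled, the remaining equivalences are a short case check over the two nonzero residue classes mod $3$.
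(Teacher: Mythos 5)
Your proposal is correct and follows exactly the route the paper intends: the corollary is stated as a direct consequence of Theorem~\ref{errsLens}, and you deduce it by computing $\frac{r(r-1)(r-2)}{3}\bmod r$ (zero when $3\nmid r$, and $\frac{2r}{3}$ when $3\mid r$, the same congruence the paper records in the proof of Corollary~\ref{corollary:isomclass}) and then specialising the criterion at the weight vectors $(1,1,1,1)$ and $(1,1,r-1,1)$. The arithmetic details you supply — integrality of the factor, cancellation of $s=r/3$, and the descent of inverses mod $r$ to inverses mod $3$ — are all sound, so there is nothing to correct.
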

For dimension less than $7$, Eilers, Restorff, Ruiz and Sørensen observed that the adjacency matrices are independent of the weights, hence all quantum lens spaces are isomorphic. We will see that this is not always the case when one of the weights is not coprime with $r$. 

\section{Classification of $C\left(\overline{L}_{2n+1}(r;\underline{m})\right), n\leq 3$}\label{classification}
In this section we fix the value of the order of the acting group, $r$, even though, as we will show later in Remark \ref{remark:rinvariant}, $r$ is in fact an invariant in most of the cases considered. Also, note that for the remainder of this paper, we will use the notation $\mathbb{Z}^\times_k$ to denote the subgroup of multiplicative units of the ring $\mathbb{Z}_k$, and we will use $\phi$ to denote Euler's totient function, that is $\phi(k)\coloneqq \left\vert\mathbb{Z}_k^\times\right\vert$.

Quantum lens spaces of dimension $3$, with one and only one weight coprime with $r$, will be the same for any choice of weights. See remark \ref{dim35}. For dimension $5$ we obtain the following:

\begin{theorem}\label{5dimensional}
Let $r\in \Nb$, $r\geq 2$ and let $\underline{m}=(m_0,m_1,m_2)$ and $\underline{n}=(n_0,n_1,n_2)$ be in $\mathbb{N}^3$ such that $\gcd(m_\ell,r)=\gcd(n_\ell,r)=K$ for one $0\leq \ell \leq 2 $, and $\gcd(m_i,r)=\gcd(n_i,r)=1$ whenever $i\neq \ell$. Then 
\begin{itemize}

    \item[(i)]  $C^*\left(\overline{L}_5^{(r,\underline{m})}\right)\cong C^*\left(\overline{L}_5^{(r,\underline{n})}\right)$ if $\ell\in\lbrace 0,1\rbrace$ and
    
    \item[(ii)] $C^*\left(\overline{L}_5^{(r,\underline{m})}\right)\cong C^*\left(\overline{L}_5^{(r,\underline{n})}\right)$ if and only if $m_1\equiv n_1 \Mod{K}$ if $\ell=2$. 
\end{itemize}
\end{theorem}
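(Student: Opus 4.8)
The plan is to invoke the dimension-$5$ analogue of Corollary~\ref{lensclassification}, recorded in the remark following it, which reduces the isomorphism question to deciding when the reduced adjacency matrices $B_{\overline{L}_5^{r;\underline{m}}}$ and $B_{\overline{L}_5^{r;\underline{n}}}$ are $\SL_{\mathcal{P}}(\boldsymbol{1},\Zb)$-equivalent. Here $\mathcal{P}=\{1,2,3\}$ (ordered linearly) when $\ell=0$, and $\mathcal{P}=\{1,\dots,K+2\}$ with the order of the corresponding dimension-$7$ case when $\ell=1,2$. Thus the proof separates into a combinatorial part, computing the matrices by counting admissible paths in the skew product graph, and a linear-algebraic part, analysing the $\SL_{\mathcal{P}}$-orbit of the result.

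First I would fix the standard vertex ordering from Figure~\ref{namevertices} and compute $B=A-I$ in each case. The uniform edge-counts come out directly from the loop structure: an edge between two adjacent coprime levels contributes $r$ admissible paths, whereas an edge incident to the distinguished level of multiplicity $K$ contributes $r/K$ paths, independently of which distinguished-level vertex is involved. So for $\ell=2$ one gets $b_{12}=r$ and $b_{2,3+k}=r/K$ for every level-$2$ vertex $k=0,\dots,K-1$, and for $\ell=0,1$ the between-level entries are likewise constant. The only entry carrying genuine weight-dependence is the ``long'' entry descending from level $0$ to the distinguished level (skipping level $1$); for $\ell=2$ this is $b_{1,3+k}$, the number of admissible paths from $(v_0,0)$ to $(v_2,k)$.

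The heart of the argument is evaluating $b_{1,3+k}$. I would split these paths into those using $e_{02}$ directly and those passing through level $1$, parametrise each admissible path by its successive loop-counts, and then reverse the order of summation, replacing the level-$1$ loop-count by the ``time'' $\sigma$ at which the trajectory $\sigma\mapsto \sigma m_1$ first returns to $0$. This collapses the resulting double sum into a single weighted count and yields a closed form of the shape $b_{1,3+k}\equiv \tfrac{r}{K}\bigl(k\,m_1^{-1}\bmod K\bigr)+C \Mod{r}$, where $C$ depends only on $r$ and $K$. The analogous but weight-independent computation for the long entry when $\ell=0,1$ gives the constant $\tfrac{r(r-K)}{2K}$.

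Finally I would read off the $\SL_{\mathcal{P}}$-invariants. Because the distinguished-level vertices are mutually incomparable and (for $\ell=2$) carry no outgoing edges, a direct multiplication $UBV$ shows that $b_{12}$ and the $b_{2,3+k}$ are fixed, while each $b_{1,3+k}$ can only be changed by adding a \emph{common} multiple $u_2(r/K)$ together with an individual multiple $v_{2,3+k}\,r$. Hence equivalence amounts to the existence of a single $u_2$ with $b^{(n)}_{1,3+k}-b^{(m)}_{1,3+k}\equiv u_2(r/K)\Mod{r}$ for all $k$; inserting the closed form and testing $k=0,1$ reduces this precisely to $m_1\equiv n_1 \Mod{K}$, giving (ii). For $\ell\in\{0,1\}$ every invariant depends only on $r$ and $K$, so the two matrices are in fact identical and (i) follows with $U=V=I$. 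I expect the main obstacle to be the count of $b_{1,3+k}$ for $\ell=2$: getting the summation reversal and the residue $k\,m_1^{-1}\bmod K$ exactly right, including the case $k=0$ and the endpoint conventions in the definition of admissibility, is where the real work lies. Once that closed form is established, the remaining steps are bookkeeping.
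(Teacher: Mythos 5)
Your proposal is correct and takes essentially the same route as the paper: the paper omits the written proof but supplies exactly your ingredients, namely the dimension-$5$ adjacency matrices of Remark \ref{dim35} (computed from Lemmas \ref{Lemma:1step} and \ref{2step}) together with the dimension-$5$ analogue of Corollary \ref{lensclassification} and the $\SL_{\mathcal{P}}$-orbit analysis used in the proof of Theorem \ref{Thm:Main}. Your closed form for the weight-dependent entries agrees modulo $r$ with the paper's $z_t=\tfrac{r(r-K)}{2K}+\tfrac{r}{K}a_1t+q_tr$, and your reduction of equivalence to the existence of a single $u_2$ with $z'_k-z_k\equiv u_2\tfrac{r}{K}\Mod{r}$ for all $k$ (tested at $k=0,1$) is precisely the congruence argument the paper runs in the $7$-dimensional case.
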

The proof of Theorem \ref{5dimensional} follows by a similar, but much easier, approach as the corresponding results in the 7-dimensional case which is presented in Theorem \ref{Thm:Main} below. Therefore we will not present the proof in the present paper.

\begin{corollary}
Let $\gcd(m_2,r)=K$ and $\gcd(m_i,r)=1$ for $i=0,1$, then
\[
C^*\left(L_5^{(r;(m_0,m_1,m_2))}\right)\cong C^*\left(L_5^{(r;(1,k_1,K))}\right),
\]
where $m_1\equiv k_1 \Mod{K}$ with $k_1\in \mathbb{Z}^\times_r$, and there are exactly $\phi(K)$ isomorphism classes of quantum lens spaces. 
\end{corollary}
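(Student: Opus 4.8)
The plan is to obtain this as a direct packaging of Theorem \ref{5dimensional}(ii), which governs precisely the case $\ell=2$ considered here. The corollary has two assertions — existence of the normal form $(1,k_1,K)$ and the count $\phi(K)$ — and both reduce to the single invariant $m_1 \bmod K$ supplied by that theorem, together with one elementary fact about unit groups. First I would fix the representative: given $\underline{m}=(m_0,m_1,m_2)$ with $\gcd(m_2,r)=K$ and $\gcd(m_0,r)=\gcd(m_1,r)=1$, set $\underline{n}:=(1,k_1,K)$ where $k_1\in\mathbb{Z}_r^\times$ satisfies $k_1\equiv m_1 \Mod{K}$. Such a $k_1$ exists trivially, since $k_1=m_1$ already works ($m_1$ being a unit mod $r$ and $K\mid r$), but I would record the stronger fact that the reduction homomorphism $\mathbb{Z}_r^\times\to\mathbb{Z}_K^\times$ is surjective, so every residue class in $\mathbb{Z}_K^\times$ admits a unit lift. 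I would then verify that $\underline{n}$ meets the hypotheses of Theorem \ref{5dimensional}: $\gcd(n_0,r)=\gcd(1,r)=1$, $\gcd(n_1,r)=\gcd(k_1,r)=1$, and $\gcd(n_2,r)=\gcd(K,r)=K$ because $K\mid r$, matching the respective $\gcd$'s of $\underline{m}$.

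With $\underline{n}$ in hand, Theorem \ref{5dimensional}(ii) gives $C^*(\overline{L}_5^{(r,\underline{m})})\cong C^*(\overline{L}_5^{(r,\underline{n})})$ if and only if $m_1\equiv k_1 \Mod{K}$, which holds by construction. Combined with Theorem \ref{mainquantumlens}, which identifies $C(L_q^5(r;\cdot))$ with the corresponding graph algebra $C^*(\overline{L}_5^{(r,\cdot)})$, this yields the displayed isomorphism.

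For the count, I would argue that within this family the isomorphism class of $C^*(\overline{L}_5^{(r,\underline{m})})$ is, by Theorem \ref{5dimensional}(ii), a complete invariant given by the residue $m_1\bmod K$. The constraint $\gcd(m_1,r)=1$ forces $\gcd(m_1,K)=1$ (again since $K\mid r$), so this residue lies in $\mathbb{Z}_K^\times$; conversely, surjectivity of $\mathbb{Z}_r^\times\to\mathbb{Z}_K^\times$ shows every element of $\mathbb{Z}_K^\times$ is attained by some admissible weight, and the ``only if'' direction of Theorem \ref{5dimensional}(ii) shows that distinct residues yield non-isomorphic algebras. Hence the isomorphism classes are in bijection with $\mathbb{Z}_K^\times$, giving exactly $\phi(K)=\lvert\mathbb{Z}_K^\times\rvert$ of them.

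The one step requiring genuine care — the only point beyond a direct appeal to Theorem \ref{5dimensional} — is the surjectivity of the reduction $\mathbb{Z}_r^\times\to\mathbb{Z}_K^\times$ for $K\mid r$, since this is what guarantees that all $\phi(K)$ classes are realized and the count is exactly $\phi(K)$ rather than smaller. This is standard: given a unit $a$ mod $K$, write $b=a+tK$; for primes $p\mid K$ one has $p\nmid a$ and $b\equiv a \pmod{p}$, so automatically $p\nmid b$, while for primes $p\mid r$ with $p\nmid K$ the element $K$ is invertible mod $p$, excluding a single residue of $t$ mod $p$, and a choice of $t$ avoiding all such bad residues exists by the Chinese Remainder Theorem, producing $b\equiv a \Mod{K}$ with $\gcd(b,r)=1$.
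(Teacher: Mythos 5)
Your proposal is correct and is essentially the argument the paper intends: the corollary is stated as an immediate consequence of Theorem \ref{5dimensional}(ii), exactly as you derive it, with the isomorphism classes parametrised by the residue $m_1 \bmod K$ in $\mathbb{Z}_K^\times$. The only sub-step where you diverge is the surjectivity of $\mathbb{Z}_r^\times \to \mathbb{Z}_K^\times$, which you prove via the Chinese Remainder Theorem, whereas the paper establishes the same fact (in the proof of Corollary \ref{corollary:isomclass}) by the explicit lift $k' = Kp + k$ with $p$ the product of the prime factors of $r$ dividing neither $k$ nor $K$ — both are standard and equally valid.
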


We will now state our main theorem for quantum lens spaces of dimension $7$, which is an extension of Theorem \ref{errsLens}.

\begin{theorem}
\label{Thm:Main}
Let $r\in \Nb$, $r\geq 2$ and let $\underline{m}=(m_0,m_1,m_2,m_3)$ and $\underline{n}=(n_0,n_1,n_2,n_3)$ be in $\mathbb{N}^4$ such that $\gcd(m_\ell,r)=\gcd(n_\ell,r)=K$ for one $0\leq \ell \leq 3 $, and $\gcd(m_i,r)=\gcd(n_i,r)=1$ whenever $i\neq \ell$. Then $C^*\left(\overline{L}_7^{(r,\underline{m})}\right)$ is isomorphic to $C^*\left(\overline{L}_7^{(r,\underline{n})}\right)$ if and only if
\begin{enumerate}
\item[(0)] $\left(m_2^{-1}m_1-n_2^{-1}n_1 \right) \frac{r(r-1)(r-2)}{3} \equiv 0 \pmod{r}$ if $\ell=0$ and $3\nmid K$. If $3\mid K$ then they are always isomorphic.
    \item[(1)] $\left(n_2^{-1}n_1-m_2^{-1}m_1 \right)\frac{r(r-1)(r-2)}{3}\equiv 0\Mod{r}$ and $m_2\equiv n_2 \pmod{K}$ if $\ell=1$,
    \item[(2)] $\left(n_1^{-1}n_2-m_1^{-1}m_2 \right)\frac{r(r-1)(r-2)}{3}\equiv 0\Mod{r}$ and $m_1\equiv n_1 \pmod{K}$ if $\ell=2$,
    \item[(3)] $\left(n_2^{-1}n_1-m_2^{-1}m_1 \right)\frac{r(r-1)(r-2)}{3}\equiv 0\Mod{r}$ and $m_j\equiv n_j \pmod{K}, j=1,2$ if $\ell=3$.
\end{enumerate}
\end{theorem}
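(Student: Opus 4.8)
The plan is to reduce the classification entirely to a linear-algebraic problem over $\Z$ and then extract the stated number-theoretic invariants from it. By Corollary \ref{lensclassification}, the hypothesis $\gcd(m_\ell,r)=\gcd(n_\ell,r)=K$ for a single $\ell$ together with coprimality of the other weights gives $\gcd(m_i,r)=\gcd(n_i,r)$ for every $i$, so $C^*(\overline{L}_7^{(r,\underline{m})})$ and $C^*(\overline{L}_7^{(r,\underline{n})})$ are isomorphic if and only if there exist $U,V\in SL_{\mathcal{P}}(\mathbf{1},\Z)$ with $U B_{\overline{L}_7^{r;\underline{m}}} V = B_{\overline{L}_7^{r;\underline{n}}}$. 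Since $SL_{\mathcal{P}}(\mathbf{1},\Z)$ consists of the unipotent upper-triangular integer matrices whose nonzero off-diagonal entries respect $\preceq$, the admissible moves are exactly the addition of $\preceq$-compatible integer multiples of one row (resp. column) to a later one. Thus everything hinges on (a) writing the matrices $B_{\overline{L}_7^{r;\underline{m}}}$ explicitly, and (b) computing the orbit of such a matrix under these restricted row/column operations.

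For step (a) I would compute the adjacency matrices in each of the four cases $\ell=0,1,2,3$ by counting the admissible paths $n_{ij}^{st}$ in the skew product graph $L_7\times_c\Z_r$; this is carried out in Section \ref{adjacencymatrix}. For $\ell=0$ the graph has one vertex per level and the partial order is linear, so $B_{\overline{L}_7^{r;\underline{m}}}$ is $4\times 4$ strictly upper triangular, with entries given by combinatorial sums over the loop multiplicities at the intermediate levels. The triple sum produced by a path descending from level $0$ through levels $1$ and $2$ to level $3$ is precisely what yields the factor $\tfrac{r(r-1)(r-2)}{3}$, exactly as in the coprime computation underlying Theorem \ref{errsLens}. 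For $\ell\ge 1$ the level carrying the non-coprime weight splits into $K$ vertices, so $B_{\overline{L}_7^{r;\underline{m}}}$ becomes a $(K+3)\times(K+3)$ matrix whose block structure is governed by the branched component graphs of Figure \ref{Fig:Component}; here the residues of $m_1$ (resp. $m_2$, or both) modulo $K$ enter the path counts and are the source of the extra congruence conditions.

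For step (b) I would put each matrix into a normal form under the $\preceq$-restricted operations, successively clearing entries by row/column reductions while tracking which combinations remain invariant because the clearing move they would require is forbidden by the partial order. For $\ell=0$ a single residue survives and reproduces condition (0), with the dichotomy $3\mid K$ versus $3\nmid K$ arising because $\tfrac{r(r-1)(r-2)}{3}$ is divisible by the relevant modulus precisely when $3\mid K$, forcing the invariant to vanish. For $\ell\ge 1$ two independent invariants survive: the ERRS-type residue coming from the linear spine of the component graph, and a new residue modulo $K$ coming from the branched vertices, giving the paired conditions in (1)--(3). Both directions of the biconditional are settled here: matching invariants let me build explicit $U,V$ by reversing the reduction, while a genuine isomorphism forces the invariants to agree.

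The main obstacle is the interaction between the admissible-path combinatorics and the partial-order constraint in the branched cases. Concretely, the hard part is (i) obtaining closed forms for the counts $n_{ij}^{st}$ when level $\ell$ carries $K$ vertices, since admissibility now excludes an entire block $\{0,\dots,K-1\}$ of residues at each intermediate level, and (ii) verifying that the clearing moves needed to simplify the $(K+3)\times(K+3)$ matrix are genuinely permitted by $\preceq$, so that no spurious invariant is introduced and none is lost. These calculations are long, which is why I would defer them to Section \ref{invariant}.
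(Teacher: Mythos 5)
Your overall strategy---reducing to $SL_{\mathcal{P}}$-equivalence via Corollary \ref{lensclassification}, computing the adjacency matrices by counting admissible paths, and then extracting residue invariants from the orbit of $B_{\overline{L}_7^{r;\underline{m}}}$ under the $\preceq$-restricted row/column operations---is exactly the paper's. Your outline of cases (1)--(3) is consistent with what the paper does, although you gloss over a nontrivial step there: in cases (1) and (2) the computation naturally produces the invariant $\left(n_2^{-1}n_1-m_2^{-1}m_1\right)\frac{r(2r-K)(r-K)}{3K}\equiv 0 \pmod{r}$, and a separate number-theoretic argument is required to show this is equivalent to the stated $\frac{r(r-1)(r-2)}{3}$ form.

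However, your treatment of case $\ell=0$ contains a genuine error that would derail the proof. You claim that a single residue survives and that the dichotomy between $3\mid K$ and $3\nmid K$ arises because $\frac{r(r-1)(r-2)}{3}$ is divisible by the relevant modulus precisely when $3\mid K$. This is false: divisibility of $\frac{r(r-1)(r-2)}{3}$ by $r$ is governed by whether $3\mid r$, and when $3\mid K$ (hence $3\mid r$, since $K\mid r$) one has $\frac{r(r-1)(r-2)}{3}\equiv \frac{2r}{3}\not\equiv 0 \pmod{r}$, so the naive ERRS condition does not become vacuous; it fails, for instance, for weights with $m_2^{-1}m_1\not\equiv n_2^{-1}n_1 \pmod{3}$, which the theorem nevertheless declares isomorphic. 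What actually happens is that the entry $x_0$ of the adjacency matrix in Lemma \ref{Lemma:m0} depends on $a_1=[m_1]_K^{-1}$ (Notation \ref{notation:ai}), and the comparison of $K(x_0'-x_0)$ produces a second term, $\left(a_1'n_2^{-1}-a_1m_2^{-1}\right)\frac{r(K+1)(2K+1)}{6}$, alongside the ERRS term, as in \eqref{Eq:M0Inv}. When $3\nmid K$ this extra term is congruent to zero modulo $r$ and one recovers condition (0); when $3\mid K$ neither term vanishes individually, but their sum is always congruent to zero because $a_1\equiv m_1$, $a_1'\equiv n_1$ and $m_2^{-1}\equiv m_2 \pmod{3}$, which is the real reason the spaces are always isomorphic in that case. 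Without identifying this second invariant term---and without the parity bookkeeping (the $KJ\frac{r}{2}$ contribution when $r$ is even and $K$ is odd) needed to discard the remaining terms---your normal-form computation in case (0) would output the wrong invariant and hence the wrong classification whenever $3\mid K$.
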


The proof is postponed to section \ref{invariant}. From Theorem \ref{Thm:Main}, we may derive the following results. They are in particular interesting for computational purposes, and gives a precise determination of how many different spaces we obtain of each type. It also shows that the first case is somewhat degenerate.
\begin{corollary}\label{corollary:isomclass}
Let $r\in\Nb$, $\underline{m}=(m_0,m_1,m_2,m_3)\in\Nb^4$ and $\gcd(m_i,r)=K$ for precisely one $i$ and $\gcd(m_j,r)=1$ when $j\neq i$. 
Furthermore, let $k_1,k_2\in \Nb$ be such that $0<k_1,k_2<K$ and $\gcd(k_1,K)=\gcd(k_2,K)=1$. 
\\
If $3\mid K$ or $3\nmid r$ then $C(L_q(r;\underline{m}))$ is isomorphic to a quantum lens space with precisely one of the following sets of weights:
$$
(K,1,1,1), \quad (1,K,k_2,1), \quad (1,k_1,K,1), \quad (1,k_1,k_2,K).
$$
If $3\nmid K$ then $C(L_q(r;\underline{m}))$ is isomorphic to a quantum lens space with one of the following set of weights: 
$$
(K,1,1,1), \quad (1,K,k_2,1), \quad (1,k_1,K,1), \quad (1,k_1,k_2,K) 
$$
if $m_1\equiv m_2 \pmod{3}$ and 
$$
(K,1,r-1,1), \quad (1,K,k_2(r-1),1), \quad (1,k_1(r-1),K,1), \quad (1,k_1,k_2(r-1),K)
$$
if $m_1\not\equiv m_2 \pmod{3}$. 
\end{corollary}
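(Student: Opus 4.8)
The plan is to deduce the corollary entirely from the isomorphism criterion of Theorem \ref{Thm:Main}, by first reducing the congruence involving $T:=\frac{r(r-1)(r-2)}{3}$ to a condition modulo $3$, and then producing, for each admissible weight tuple, a representative of the prescribed shape via the Chinese Remainder Theorem. I would begin with the arithmetic of $T$ modulo $r$. Since $3T=r(r-1)(r-2)\equiv 0\pmod r$, if $3\nmid r$ then $3$ is invertible modulo $r$ and $T\equiv 0\pmod r$; if $3\mid r$, writing $r=3s$ gives $T=s(3s-1)(3s-2)\equiv 2s=\frac{2r}{3}\pmod r$. Hence for any integer $a$ the congruence $aT\equiv 0\pmod r$ is automatic when $3\nmid r$, and (as $\gcd(2,3)=1$) equivalent to $a\equiv 0\pmod 3$ when $3\mid r$. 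Applying this to parts (0)--(3) of Theorem \ref{Thm:Main} and using that each unit of $\Zb_3$ is its own inverse, the ``$T$-part'' of every criterion collapses to the single condition $m_1m_2\equiv n_1n_2\pmod 3$ when $3\mid r$, and is vacuous otherwise. Since for units of $\Zb_3$ one has $m_1m_2\equiv 1$ iff $m_1\equiv m_2$ and $m_1m_2\equiv 2$ iff $m_1\not\equiv m_2\pmod 3$, this is exactly the dichotomy in the statement.

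Next I would record that the position $\ell$ of the non-coprime weight is itself an isomorphism invariant, read off the ideal lattice: from $|\mathrm{Prime}_\gamma|$ (which equals $4$ for $\ell=0$ and $K+3$ otherwise) together with the shape of the component graph in Figure \ref{Fig:Component}, which places the branching at level $\ell$. This forces exactly one of the four shapes to occur and justifies the word ``precisely''. Within a fixed $\ell$, Theorem \ref{Thm:Main} says the class is determined by the relevant residues modulo $K$ (none for $\ell=0$; $m_2$ for $\ell=1$; $m_1$ for $\ell=2$; both $m_1,m_2$ for $\ell=3$) together with the mod-$3$ class above.

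I would then split into the two regimes. If $3\mid K$ or $3\nmid r$, the mod-$3$ condition is either vacuous (when $3\nmid r$) or, for $\ell=1,2,3$, automatically implied by the mod-$K$ congruences: when $3\mid K$ the $K$-weight is divisible by $3$, so the product $m_1m_2$ either vanishes modulo $3$ ($\ell=1,2$) or agrees with $n_1n_2$ via the congruences ($\ell=3$); for $\ell=0$ it is dropped by part (0). Thus only the mod-$K$ data survives, and for each $\ell$ the clean tuple $(K,1,1,1),(1,K,k_2,1),(1,k_1,K,1),(1,k_1,k_2,K)$, with $k_i$ chosen as the residues $m_i\bmod K$, matches all invariants. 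If instead $3\nmid K$ and $3\mid r$, both mod-$3$ classes genuinely occur: the first list lies in the class $m_1\equiv m_2\pmod 3$, and multiplying the relevant weight by $r-1\equiv -1\pmod r$ flips the mod-$3$ product while leaving the gcd at position $\ell$ equal to $K$, producing the second list for the class $m_1\not\equiv m_2$.

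The main work, and the main obstacle, is the simultaneous Chinese Remainder Theorem bookkeeping that actually exhibits a representative of the prescribed shape: for each $\ell$ one must construct a weight vector with $\gcd(\cdot,r)=K$ at position $\ell$ and $=1$ elsewhere, carrying the prescribed residues modulo $K$ and lying in the prescribed class of $\Zb_3$, all at once. Since $\gcd(3,K)=1$ in the split regime, CRT guarantees such lifts exist, and one checks that the $(r-1)$-twist realizes the opposite mod-$3$ class without disturbing the gcd or, after the reparametrization $k_i\mapsto -k_i$, the mod-$K$ data. Finally I would confirm that the listed representatives are pairwise non-isomorphic, i.e. that the $\phi(K)$-fold choices of the $k_i$ and the two mod-$3$ classes give distinct spaces, by reading off the corresponding invariants from Theorem \ref{Thm:Main}; this yields the stated enumeration.
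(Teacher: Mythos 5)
Your overall route is the same as the paper's: reduce the cubic term $\frac{r(r-1)(r-2)}{3}$ modulo $r$ to a mod-$3$ condition (vacuous when $3\nmid r$, equivalent to $m_1m_2\equiv n_1n_2\pmod{3}$ when $3\mid r$), note that the level carrying the non-coprime weight is rigid because of the ideal structure, observe that when $3\mid K$ the mod-$3$ condition is absorbed by the mod-$K$ congruences (or dropped entirely in case $\ell=0$), and realize the two mod-$3$ classes by the $(r-1)$-twist, checking disjointness of the two lists against the invariant of Theorem \ref{Thm:Main}. All of this matches the structure of the paper's proof, which treats $\gcd(m_3,r)=K$ in detail and declares the other levels analogous.

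The gap is in your realization step. You write that ``CRT guarantees such lifts exist,'' but the Chinese Remainder Theorem applied to the coprime moduli $3$ and $K$ only produces an integer with prescribed residues modulo $3K$; it does not make that integer coprime to $r$, and $r$ will in general have prime factors dividing neither $3$ nor $K$. Coprimality to $r$ at every position other than $\ell$ is exactly what is needed for Theorem \ref{Thm:Main} to apply to the representative at all: the listed $k_i$ with $0<k_i<K$ and $\gcd(k_i,K)=1$ need not be coprime to $r$ (take $r=70$, $K=10$, $k_1=7$), in which case the listed tuple is not an admissible weight system of the type being classified. The paper closes precisely this hole with a dedicated lemma: for any unit class $[k]_K$ it constructs an explicit representative $k'=Kp+k$, where $p$ is the product of $1$ and the prime factors of $r$ dividing neither $k$ nor $K$, and verifies $\gcd(k',r)=1$; it then reuses this lemma (with $K$ replaced by $3$) when building the twisted representatives in the case $3\mid r$, $3\nmid K$. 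Without this unit-lifting statement, your inference ``the tuple matches all invariants, hence the space is isomorphic to it'' is not justified. Relatedly, your assertion that ``the first list lies in the class $m_1\equiv m_2\pmod 3$'' needs the same care the paper exercises: once the $k_i$ are confined to $(0,K)$, the residue $k_1k_2\bmod 3$ is forced by the mod-$K$ data and need not be $1$, which is why the paper picks $k_1,k_2$ through a case analysis (using the lifting lemma) rather than simply taking the mod-$K$ reductions, and your one-line reparametrization $k_i\mapsto -k_i$ does not by itself repair this.
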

\begin{proof}
First note that in the case were $\gcd(m_0,r)=K$ and $3\nmid r$ the invariant coincides with the analoguous invariant in \cite[Corollary 7.9]{errs}, and we may make the same conclusion. 

We now address the proof in the case where $\gcd(m_3,r)=K$ since the remaining follow by a similar approach.
If $3$ does not divide $r$ we notice that we by Theorem \ref{Thm:Main} only need to consider the condition $m_i\equiv n_i \Mod{K}$. It is clear that if $\gcd(m_i,r)= 1$, then also $\gcd(m_i,K)=1$. Thus, it suffices to show that if $[k]_K\in \mathbb{Z}^\times_K$, then $[k]_K$ contains an element, $k'$, such that $\gcd(k',r)=1$, since then each isomorphism class may be determined by a pair of units in $\mathbb{Z}_K$. Indeed, let $[k]_K\in \mathbb{Z}_K^\times$ be arbitrary. We set $p$ to be the product of $1$ and all prime factors of $r$ which are factors of neither $k$ nor $K$. Now set $k'\coloneqq Kp+k\equiv k \pmod{K}$ and assume that $\gcd(k',r)\neq 1$. Then there exists a common prime factor $q$ of $r$ and $k'$. Since $q$ divides $k'$ but only divides exactly one of $Kp$ and $k$ by construction, we have a contradiction, and $\gcd(k',r)=1$ as desired.

If $3$ divides $r$ then we also need to consider the first part of the invariant. We observe the congruence
\[
\frac{r(r-1)(r-2)}{3}\equiv \frac{2r}{3} \Mod{r}.
\]
If $m_i\equiv \ell_i \pmod{K}$ for $i=1,2$ with $0<\ell_i<K$ and $\gcd(\ell_i,K)=1$ then $3$ must divide $\ell_2^{-1}\ell_1-m_2^{-1}m_1$ to get isomorphic quantum lens spaces. Also notice, that it follows by a computation that $m_1\equiv m_2 \Mod{3}$ if and only if $\ell_1\equiv \ell_2 \Mod{3}$. Hence $C(L_q(r;\underline{m}))$ is isomorphic to $C(L_q(r;(1,\ell_1,\ell_2,K)))$ where $m_1\equiv m_2 \Mod{3}$ if and only if $\ell_1\equiv \ell_2 \Mod{3}$. 
\\ Let $\ell_1\not\equiv \ell_2 \Mod{3}$, we claim that there exists $k_1,k_2$ with $0<k_i<K$ and $\gcd(k_i,K)=1$ for $i=1,2$ such that 
$$
\ell_2\equiv k_2(r-1) \Mod{K}, \hspace{0.2cm} \ell_1\equiv k_1 \Mod{K},  \hspace{0.2cm} k_1\equiv k_2 \Mod{3}.
$$
Let $k_1:=\ell_1$. Assume first that $k_1\in [1]_3$ then $\ell_2\in [2]_3$. Let $k_2=1$ then 
$$
k_2(r-1)\equiv -k_2 \Mod{3} \equiv 2 \Mod{3} \equiv \ell_2 \Mod{3}
$$
and clearly $k_2\equiv k_1 \Mod{3}$. If $k_1\in [2]_3$ then $\ell_2\in [1]_3$ and since $3|r$ we can by the first part of the proof (by letting $K=3$) find $k_2\in [2]_3$ such that $\gcd(k_2,r)=1$ and hence $\gcd(k_2,K)=1$. Then 
$$
k_2(r-1)\equiv -k_2 \Mod{3} \equiv 1 \Mod{3} \equiv k_2 \Mod{3},
$$
and we have proven the claim. Hence, $C(L_q(r;(1,\ell_1,\ell_2,K)))$ is isomorphic to \\ $C(L_q(r;(1,k_1,k_2(r-1),K)))$ for $k_1,k_2$ such that $0<k_i<K$, $\gcd(k_i,K)=1$ and $k_1\equiv k_2 \Mod{3}$.

We will end the proof by showing that there is no overlap between the classes i.e we have to show that 
$$
k_2\equiv k_1 \Mod{3} \Leftrightarrow k_2(r-1)\not\equiv k_1 \Mod{3}
$$
when $0<k_i<K$ and $\gcd(k_i,K)=1$ for i=1,2. Assume that $k_2\equiv k_1 \Mod{3}$ then $k_2(r-1)\equiv -k_1 \Mod{3}$ but $k_1\not\equiv -k_1 \Mod{3}$ since $3\nmid k_1$. The converse follows from a similar line of reasoning. 

\end{proof}
From Corollary \ref{corollary:isomclass}, it follows immediately the we can count the number of isomorphism classes as given in table \ref{Table:Isomorphismclasses}.
\begin{table}[H]
\centering
\begin{tabular}{ c| c c c c c }
 & $3 \not| r$ & & $3|r$ and $3\not| K$ & & $3|r$ and $3|K$\\
\hline
& & & \\
$\gcd(m_0,r)=K$ & 1 & & 2 & & 1 \\
& & & \\
$\gcd(m_1,r)=K$ & $\phi(K)$ & & $2\phi(K)$ & & $\phi(K)$ \\
& & & \\
$\gcd(m_2,r)=K$ & $\phi(K)$ & & $2\phi(K)$ & & $\phi(K)$ \\
& & & \\
$\gcd(m_3,r)=K$ & \phantom{ll}$\phi(K)^2$ & & \phantom{ll}$2\phi(K)^2$ & & \phantom{ll}$\phi(K)^2$  \\
& & & \\

\end{tabular}
\vspace{0.3cm}
\caption{The number of isomorphism classes.}
\label{Table:Isomorphismclasses}
\end{table}

\section{Adjacency matrices}\label{adjacencymatrix}
For each $i=0,...,n$ let $(L_{2n+1}\times_{c}\Zb_r)\left< i\right>$ be the subgraph of $L_{2n+1}\times_{c}\Zb_r$ with vertex set $\{v_i\}\times \Zb_r$ and edges $\{e_{ii}\}\times \Zb_r$. 
\begin{definition}\cite[Definition 7.4]{errs}
 We call an admissible path $(e_{i_1,j_1},h_1)\cdots (e_{i_\ell,j_\ell},h_\ell)$ in $L_{2n+1}\times_{c} \Zb_r$ $k$-step if there exists integers $0<t_1<t_2<\cdots <t_{k+1}$ such that $t_1=i_1$ and $t_{k+1}=j_\ell$ and for each $2\leq \alpha\leq k$ we have 
$$
 \{r((e_{i_s,j_s},h_s))| 1\leq s\leq \ell\}\cap ((L_{2n+1}\times_{c}\Zb_r)\left< t_\alpha \right>)^0\neq \emptyset, 
 $$
 and 
 $$
\{r((e_{i_s,j_s},h_s))| 1\leq s\leq \ell\}\subseteq \bigcup_{i=1}^{k+1} ((L_{2n+1}\times_{c}\Zb_r)\left< t_i \right>)^0.
$$
\end{definition}
Intuitively an admissible path is $k$-step if it touches vertices from precisely $k-1$ different levels not including the level the path starts at and ends in. Below, we give formulae for the number of 1-step, 2-step and 3-step admissible paths in each relevant case. For paths that only touch levels for which the corresponding weights are coprime to the order of the acting group, we refer to \cite[Lemma 7.6]{errs}, which describes this case to completion.

\begin{lemma}{\textbf{1-step admissible paths}}\label{Lemma:1step}
Let $r\in \Nb$, $r\geq 2$ and let $\underline{m}=(m_0,m_1,m_2,m_3)\in \Nb^4$ be such that for a single $\ell\in \lbrace 0,1,2,3\rbrace$, $\gcd(m_\ell,r)=K$ and $\gcd(m_i,r)=1$ for $i\neq \ell$. Let $t\in \lbrace 0,\dots, K-1 \rbrace$ then 
\begin{enumerate}
    \item For $i<\ell$, there are $\frac{r}{K}$ 1-step admissible paths from $(v_i,0)$ to $(v_\ell,t)$.
    \item For $i>\ell$, there are $\frac{r}{K}$ 1-step admissible paths from $(v_\ell,t)$ to $(v_i,0)$.
\end{enumerate}
\begin{proof}
This follows since at the $\ell$'th level we have $K$ loops each going through only one of the $(v_\ell,i)$, $i=0,1,...,K-1$.
\end{proof}
\end{lemma}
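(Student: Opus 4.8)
The plan is to show that a $1$-step admissible path between the two relevant levels is determined by a single loop-counting parameter, and then to count the admissible values of that parameter. First I would pin down the forced shape of such a path: by the $k$-step definition all edge-ranges lie in the two levels being joined, and since $L_{2n+1}$ has a unique edge $e_{pq}$ for each $p\le q$ and no level-decreasing edges, a $1$-step path must be a run of loops at its starting level, followed by a single connecting edge ($e_{i\ell}$ in case (1), $e_{\ell i}$ in case (2)), followed by a run of loops at its terminal level. Hence every such path is encoded by the numbers $a$ and $b$ of loops taken before and after the jump.

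Next I would describe the loop structure at each level. Since $\gcd(m_\ell,r)=K$ with $K\mid r$, translation by $m_\ell$ fixes residues modulo $K$, so the loop edges $(e_{\ell\ell},k)$ split the $r$ vertices of level $\ell$ into $K$ cycles of length $r/K$, one for each residue class mod $K$; crucially, the cycle of residue $t$ contains exactly one of the forbidden vertices $(v_\ell,0),\dots,(v_\ell,K-1)$, namely its representative $(v_\ell,t)$. By contrast, at a coprime level $i\neq\ell$ we have $\gcd(m_i,r)=1$, so the loops there form a single $r$-cycle whose only forbidden vertex is $(v_i,0)$.

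For case (1) I would start at $(v_i,0)$, take $a$ loops at level $i$ (admissibility forbids re-entering $(v_i,0)$, so $a\in\{0,\dots,r-1\}$), jump to $(v_\ell,(a+1)m_i)$, and then loop at level $\ell$ toward $(v_\ell,t)$. As the terminal loops stay inside one cycle, reaching $(v_\ell,t)$ forces the landing vertex into the cycle of residue $t$, i.e. $(a+1)m_i\equiv t \pmod K$; conversely, once the landing vertex sits in that cycle, its only forbidden vertex is the endpoint, so there is a unique admissible number $b$ of terminal loops. Thus the path is determined by $a$, and using $\gcd(m_i,K)=1$ the condition becomes $a\equiv t m_i^{-1}-1 \pmod K$, which has exactly $r/K$ solutions in $\{0,\dots,r-1\}$. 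Case (2) is the mirror image: a path from $(v_\ell,t)$ is fixed by the number $b\in\{0,\dots,r/K-1\}$ of loops taken at level $\ell$ before jumping (each value admissible, since one can never return to the starting forbidden vertex $(v_\ell,t)$), after which a unique number of loops on the single $r$-cycle of level $i$ lands on $(v_i,0)$; this again yields $r/K$ paths.

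The hard part will be the simultaneous bookkeeping of admissibility at \emph{both} levels: I must verify that the forbidden endpoint is never used as an interior vertex, that distinct values of the parameter yield genuinely distinct paths, and that every $1$-step admissible path is captured by this parametrisation, so that the resulting count $r/K$ is exact rather than merely a lower bound.
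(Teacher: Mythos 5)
Your proof is correct and rests on exactly the same observation as the paper's (one-line) proof: the loop edges at level $\ell$ decompose that level into $K$ cycles of length $r/K$, each containing precisely one forbidden vertex $(v_\ell,k)$ with $0\le k\le K-1$, so a $1$-step admissible path is determined by the number of loops taken at the coprime level (case 1) or at level $\ell$ (case 2), giving $r/K$ paths. The paper simply states this cycle-structure fact and leaves the rest implicit; your write-up supplies the parametrisation, the admissibility checks at both levels, and the bijectivity of the count in full detail.
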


\begin{notation}\label{notation:ai}
Consider a quantum lens space with set of weights $\underline{m}=(m_0,m_1,m_2,m_3)$. We will in the following, if $\gcd(m_i,r)= 1$, let $m_i^{-1}$ denote the fixed representative of the equivalence class $[m_i]_r^{-1}$ in $\Zb_r^\times$ for which $0< m_i^{-1} \leq r-1$, and if $K|r$ for some $K\in\Zb$, we denote by $a_i$ a fixed representative of $[m_i]^{-1}_K$ in $\Zb_K^\times$ for which $0< a_i\leq K-1$.
\end{notation}

\begin{lemma}{\textbf{2-step admissible paths}}\label{2step}
Let $r\in \Nb$, $r\geq 2$ and let $\underline{m}=(m_0,m_1,m_2,m_3)\in \Nb^4$ be such that for a single $\ell\in \lbrace 0,1,2,3\rbrace$, $\gcd(m_\ell,r)=K$ and $\gcd(m_i,r)=1$ for $i\neq \ell$. Let $t\in \lbrace 0,\dots, K-1 \rbrace$.
\begin{enumerate}
    \item If $i<k<\ell-1$, there are $\frac{r(r-K)}{2K}+\left(a_k\cdot t-1+q_t K\right)\frac{r}{K}$ 2-step admissible paths from $(v_i,0)$ to $(v_\ell,t)$ passing through the $k$'th level if $t\neq 0$ and $\frac{r(r+K-2)}{2K}$ if $t=0$;
    
    \item If $i>k>\ell-1$, there are $\frac{r(r-K)}{2K}-\left(a_k\cdot t-1+q_tK\right)\frac{r}{K}$ 2-step admissible paths from $(v_\ell,t)$ to $(v_i,0)$ passing through the $k$'th level if $t\neq 0$ and $\frac{r(r-K)}{2K}$ if $t=0$;
    \item If $i<\ell<j$, there are $\frac{r(r-K)}{2K}$ paths from $(v_i,0)$ to $(v_j,0)$ passing through the $\ell$'th level;
\end{enumerate}
where $a_k$ is defined in Notation \ref{notation:ai} and $q_t\in \Z$ is such that $a_kt-1+q_tK$ is an integer between $0$ and $K-1$. 
\begin{proof}
$(1)$ First note that there is only one path from $(v_i,0)$ to $(v_k,sm_k)$ for $s=1,2,...,r-1$ not coming back to $(v_i,0)$ and not going through any vertices at the k-th level. We have an edge from $(v_k,sm_k)$ into the cycle containing $(v_\ell,t)$ if and only if $
m_k(s+1)\equiv t \Mod{K}.$
Equivalently, $s\equiv a_kt-1 \Mod{K}$. Let $q_t\in \Z$ be such that $s_t:=a_kt-1+q_tK$ is an integer between $0$ and $K-1$. Hence $(v_k,s_tm_k)$ is the first vertex in level k which has a path ending in the cycle containing $(v_{\ell},t)$. The number of paths from $(v_k,sm_k)$ for $s=1,2,...,r-1$ to $(v_{\ell},t)$ is then 
\[
\begin{aligned}
&\frac{r-(s-h)}{K}, \ \text{if} \ s\equiv h=0,...,s_t \Mod{K}, \\
& \frac{r-(s-h)}{K}-1, \ \text{if} \ s\equiv h= s_t+1,...,K-1\Mod{K}. 
\end{aligned}
\]
The number of 2-step admissible paths then becomes 
\[
\begin{aligned}
&\sum_{h=0}^{s_t}\sum_{\substack{s=1, \\ s\equiv h \Mod{K}}}^{r-1} \frac{r-(s-h)}{K} + \sum_{h=s_t+1}^{K-1} \sum_{\substack{s=1, \\ s\equiv h \Mod{K} }}^{r-1} \left(\frac{r-(s-h)}{K}-1\right) \\
&
=\sum_{h=0}^{K-1}\sum_{\substack{s=1, \\ s\equiv h \Mod{K}}}^{r-1} \frac{r-(s-h)}{K}- \sum_{h=s_t+1}^{K-1} \sum_{\substack{s=1, \\ s\equiv h \Mod{K} }}^{r-1}1 \\
&= \sum_{j=1}^{\frac{r-K}{K}} n\frac{r-jK}{K} + (K-1)\frac{r}{K} - (K-1-s_t)\frac{r}{K} \\
&=  \frac{r(r-K)}{2K} + (a_kt-1+q_tK)\frac{r}{K}.
\end{aligned}
\]
If $t=0$ then $q_0=1$ hence the number of 2-step admissible paths becomes
\[
\frac{r(r-K)}{2K}+(K-1)\frac{r}{K}=\frac{r^2-rK+2Kr-2r}{2K}=\frac{r(r+K-2)}{2K}.
\]
(2) Let $t>0$. First, we have precisely one path from $(v_{\ell},t)$ to $(v_k,m_kh)$ whose first vertex in the $k$'th level is $(v_k,m_kh)$ if and only if $m_kh\equiv t \Mod{K}$ for $h=1,2,\dots,r-1$. The number of paths from $(v_k,m_kh)$ to $(v_i,0)$ is $r-h$. Hence the total number of admissible 2-step paths is 
\vspace{0.2cm}
\\
\resizebox{\columnwidth}{!}{$
\begin{aligned}
\sum_{\substack{h=1, \\ h\equiv a_kt \Mod{K}}}^{r-1} (r-h)
&= \sum_{j=0}^{\frac{r-K}{K}} (r-(a_kt+q_tK+jK)) = \frac{r(r+K)}{2K}-(a_kt+q_tK)\frac{r}{K} \\
&= \frac{r(r-K)}{2K}+\frac{r}{K}-(a_kt+q_tK)\frac{r}{K}
=\frac{r(r-K)}{2K}-(a_kt-1+q_tK)\frac{r}{K},
\end{aligned}$}
\vspace{0.2cm}
\\
where $q_t\in\mathbb{Z}$ is such that $0<a_kt+q_tK<K$. If $t=0$ then the number of 2-step admissible paths becomes 
\[
\sum_{j=1}^{\frac{r-K}{K}}(r-jK)=\frac{r(r-K)}{2K}. 
\]
(3) Note that for each $t\in \lbrace 1,\dots,K-1\rbrace$ and for each $h\in\lbrace  1, \dots, \frac{r}{K}-1 \rbrace$, there is precisely one edge from $(v_i,0)$ to $(v_\ell,t+m_\ell h)$, and the number of admissible paths from $(v_\ell,t+m_\ell h)$ to $(v_j,0)$ is $\frac{r}{K}-h$. Thus the number of admissible 2-step paths is

\[\sum_{t=0}^{K-1}\sum_{h=1}^{\frac{r}{K}-1} \frac{r}{K}-h = \frac{r(r-K)}{2K}. \]
\end{proof}
\end{lemma}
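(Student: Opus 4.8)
The plan is to enumerate the 2-step admissible paths by cutting each one at the single intermediate level it visits and counting the pieces separately. The only structural input needed is the arithmetic of the skew product $L_{7}\times_c\Z_r$: any edge leaving level $c$ raises the second coordinate by $m_c$, so at a coprime level ($\gcd(m_c,r)=1$) the loops form one $r$-cycle meeting every vertex $(v_c,x)$, while at the special level $\ell$ (where $K\mid m_\ell$) the loops preserve the second coordinate modulo $K$ and split into $K$ cycles of length $r/K$, one per residue class. In each cycle exactly one vertex is a gateway, namely $(v_c,0)$ at a coprime level and $(v_\ell,t)$ in the $t$-th cycle of level $\ell$, and admissibility is precisely the demand that these gateways be avoided except at the endpoints of the path.

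First I would record two uniqueness facts. From $(v_i,0)$ there is exactly one admissible segment that loops at level $i$ and then jumps directly to level $k$, hitting any prescribed non-gateway vertex of level $k$; this is forced by $\gcd(m_i,r)=1$ and amounts to fixing the number of loops. Dually, from $(v_\ell,t)$ there is a unique admissible segment reaching each prescribed first vertex of level $k$, whose second coordinate is necessarily $\equiv t \pmod K$ since leaving level $\ell$ adds $m_\ell\equiv 0 \pmod K$. These reduce every case to the scheme: fix the first vertex of the intermediate level, count its admissible continuations to the target, and sum over first vertices.

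For case (1) a continuation loops at level $k$ to an exit vertex $(v_k,s'm_k)$ and jumps into the cycle of $(v_\ell,t)$; this jump lands in the correct cycle exactly when $s'+1\equiv a_kt \pmod K$, i.e. $s'\equiv a_kt-1\pmod K$ (with $a_k$ as in Notation \ref{notation:ai}), after which a unique number of loops at level $\ell$ reaches $(v_\ell,t)$. Fixing the first vertex $(v_k,sm_k)$, the admissible exits are the vertices congruent to $s_t:=a_kt-1+q_tK$ lying between $s$ and the gateway, and their number is $\tfrac{r-(s-h)}{K}$ or one less, according to whether the residue $h$ of $s$ precedes $s_t$ or not. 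Summing over $s=1,\dots,r-1$ after grouping by residue class modulo $K$ collapses to a triangular-number evaluation and yields $\tfrac{r(r-K)}{2K}+(a_kt-1+q_tK)\tfrac{r}{K}$, with $t=0$ forcing $s_t=K-1$ and hence the stated special value. Cases (2) and (3) are analogous but lighter: since their targets $(v_i,0)$ and $(v_j,0)$ sit on coprime levels and are reachable from every exit of the intermediate level, no exit congruence is imposed, so the inner counts are simply $r-h$ (case 2) and $\tfrac{r}{K}-h$ (case 3), and summing these arithmetic progressions — over the single congruence class $h\equiv a_kt\pmod K$ in case (2), and over all $K$ classes and cycle positions in case (3) — produces $\tfrac{r(r-K)}{2K}$ up to the displayed correction term.

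The main obstacle I anticipate is the admissibility bookkeeping rather than the algebra: one must pinpoint the unique gateway of each cycle, verify that the congruence $s'\equiv a_kt-1\pmod K$ selects exactly the exits landing in the cycle of $(v_\ell,t)$, and then track the resulting off-by-one corrections for residues past $s_t$ together with the degenerate case $t=0$, where the endpoint itself is the gateway of its cycle so that the correction disappears (case 1) or the whole congruence shifts (case 2). Once these are settled, each count reduces to summing an arithmetic progression, and checking that the closed forms match is routine.
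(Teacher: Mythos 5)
Your proposal is correct and follows essentially the same route as the paper's proof: you cut each 2-step path at its unique intermediate level, derive the same entry/exit congruences ($s'\equiv a_kt-1 \pmod{K}$ in case (1), $h\equiv a_kt \pmod{K}$ in case (2)), the same off-by-one correction according to whether the residue of the entry point precedes $s_t$, and the same arithmetic-progression/triangular-number summations, including the degenerate $t=0$ cases. Your ``gateway''/cycle-structure framing is just a more explicit packaging of the admissibility bookkeeping that the paper carries out directly.
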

\begin{remark}\label{dim35}
For quantum lens spaces of dimension 3 (that is $n=1$), we see immediately by Lemma \ref{2step} that the adjacency matrices for a fixed $r$ and $K$, will all be the same. For quantum lens spaces of dimension $5$ the adjacency matrices are given by the following which are obtained by adding the number of 1-step and 2-step admissible paths as found in Lemma \ref{Lemma:1step}, \ref{2step} and \cite[Lemma 7.6]{errs}: 
\vspace{0.2cm}
\begin{center}
\begin{tabular}{c c c c}
$\gcd(m_0,r)=K$ & $\gcd(m_1,r)=K$ & &$\gcd(m_2,r)=K$\\ & & & \\
$\begin{pmatrix}
1 & \frac{r}{K} & \frac{r}{K}+\frac{r(r-K)}{2K} \\
0 & 1 & r \\
0 & 0 & 1
\end{pmatrix}$  &
$\begin{psmallmatrix}
1 & \frac{r}{K} & \frac{r}{K} & \dots & \frac{r}{K} & \frac{r(r+K)}{2K} \\
0 &  &  &  &  & \frac{r}{K}  \\
0 &  &  &  &  & \frac{r}{K} 
\\
0 & & & I_n  & & \vdots 
\\
\vdots & & & & & \frac{r}{K} 
\\
0 & 0 & 0  & \dots &  0 &  1 
\end{psmallmatrix}$ & &
$\begin{psmallmatrix}
1 & r & z_0 & \cdots & \cdots & z_{K-1}\\
0 & 1 & r/K & \cdots & \cdots & r/K \\
0 & 0 &  &  &  & \\
\vdots & \vdots &  & I_{n} &  &  \\
0 & 0 &  &  &  &  \\
 0 & 0 &  &  &  & 
\end{psmallmatrix}$
\end{tabular}
\end{center}
\vspace{0.2cm}
where $z_0=\frac{r(r+K)}{2K}$, $z_t=\frac{r(r-K)}{2K}+\frac{r}{K}a_1t+q_tr$ for $t=1,\dots,K-1$ and $a_1$ is defined in Notation \ref{notation:ai} .
\end{remark}

We will now calculate the adjacency matrices for $7$-dimensional quantum lens spaces. 
Since the following pages are rather heavy on similarly looking matrices and formulae, we provide in Table \ref{Table:MatrixOverview} an overview of where the adjacency matrix of each of our four cases may be found in the following pages, both in terms of page numbering and Lemma numbering.

\begin{table}[H]
    \centering
    \begin{tabular}{|c|c|c|}
    Case & Lemma & Page \\ \hline
    $\gcd(m_0,r)= K$   &\ref{Lemma:m0}  &\pageref{Lemma:m0} \\
      $\gcd(m_1,r)= K$    &\ref{lemmam1}  &\pageref{lemmam1} \\
        $\gcd(m_2,r)=K$     &\ref{lemmam2}  &\pageref{lemmam2} \\
            $\gcd(m_3,r)=K$     &\ref{Lemma:m3}  &\pageref{Lemma:m3}
    \end{tabular}
    \caption{Reference table for adjacency matrices of 7-dimensional spaces}
    \label{Table:MatrixOverview}
\end{table}
\begin{lemma}\label{Lemma:m3} Let $r\in \Nb$, $r\geq 2$ and let $\underline{m}=(m_0,m_1,m_2,m_3)$ be such that $\gcd(m_3,r)=K$ and $\gcd(m_i,r)=1,i\neq 3$. Then we may for each $0\leq \ell < r-1$ and each $0\leq t \leq K-1$ find $k_\ell,s_t\in \Zb$ such that
\[
A_{\overline{L}_q^{7}(r;\underline{m})}= 
\begin{psmallmatrix}
1 & r & \frac{r(r+1)}{2} & x_0 & \cdots & \cdots & x_{K-1} \\
0 & 1 & r & y_0 & \cdots & \cdots & y_{K-1}\\
0 & 0 & 1 & r/K & \cdots & \cdots & r/K  \\
0 & 0 & 0 &  &  &  & \\
\vdots & \vdots & \vdots &  & I_{K} &  &  \\
0 & 0 & 0 &  &  &  &  \\
0 & 0 & 0 &  &  &  & 
\end{psmallmatrix}
\]
where
\[y_0=\frac{r(r+K)}{2K}\]
and
\[
x_0\equiv -m_2^{-1}m_1\frac{r(r-2)(r-1)}{3K}+\sum_{\ell=1}^{r-2} \ell\frac{r(1-k_\ell)}{K} +\sum_{h=0}^{K-1}\sum_{\mathclap{{\substack{\ell=1 \\\ell\equiv m_2a_1h-1\Mod{K}}}}}^{r-2}\frac{\ell h}{K} - \frac{r}{K}\Mod{r}. 
\]
For $t\geq 1$ we have 
\[y_t=\frac{r(r-K)}{2K}+a_2t\frac{r}{K} + rq_t\]
and
\[\begin{aligned}x_t&\equiv-m_2^{-1}m_1\frac{r(r-2)(r-1)}{3K} + \sum_{\ell=1}^{r-2} \ell\frac{r(1-k_\ell)}{K} \\ &+\mathop{\sum_{h=0}^{K-1}\sum_{\ell=1}^{r-2}}_{\ell\equiv m_2a_1h-1\Mod{K}}\frac{\ell h}{K}  - \mathop{\sum_{h=s_t+1}^{K-1}\sum_{\ell=1}^{r-2}}_{\ell\equiv m_2a_1h-1\Mod{K}}\ell +\frac{r}{K}\left(a_2t+a_1t-1 \right) \Mod{r}
\end{aligned}
\]
where $a_i$ is defined in Notation \ref{notation:ai}.
\begin{proof}
We will now calculate the number of 3-step admissible paths from $(v_0,0)$ to $(v_3,t)$ for $t=0,1,2,3$. First notice that there are exactly $\ell$ paths from $(v_0,0)$ to $(v_1,\ell m_1)$ not coming back to $(v_0,0)$, and exactly one edge from $(v_1,\ell m_1)$ to $(v_2,(\ell+1)m_1)$, hence we wish to find the number of paths from $(v_2,(\ell+1)m_1)$ to $(v_3,t)$, denoted $P_\ell$. Then the total number of 3-step admissible paths will be $\sum_{\ell=1}^{r-2}\ell P_\ell$.  

We can express $(v_2,(\ell+1)m_1)$ as $(v_2,sm_2)$, where $0<s\leq r$ satisfies $m_2s\equiv (\ell+1)m_1 \Mod{r}$. As in the proof of Lemma \ref{2step}(1), we let $s_t$ denote a representative of the class $[a_2t-1]_K$ which lies between $0$ and $K-1$, and let $k_\ell$ be an integer such that $0 <m_2^{-1}m_1(\ell+1)+rk_\ell<r $. By reasoning as in Lemma \ref{2step} (1) we have 
\[
P_\ell=\frac{r-(m_2^{-1}m_1(\ell+1)+rk_\ell-h)}{K}
\]
if $s\equiv h=0,...,s_t \Mod{K}$ i.e. $\ell\equiv m_2a_1h-1 \Mod{K}$ and 
\[
P_\ell=\frac{r-(m_2^{-1}m_1(\ell+1)+rk_\ell-h)}{K}-1
\]
if $s\equiv h=s_t+1,...,K-1 \Mod{K}$. The number of 3-step admissible paths becomes 
\vspace{0.2cm} 
\\
\resizebox{\columnwidth}{!}{$
\begin{aligned}
\sum_{\ell=1}^{r-2}\ell P_\ell&=
\sum_{h=0}^{s_t}\sum_{\mathclap{{\substack{\ell=1 \\\ell\equiv m_2a_1h-1\Mod{K}}}}}^{r-2}\ell\frac{r-(m_2^{-1}m_1(\ell+1)+k_\ell r-h)}{K} +
\sum_{h=s_t+1}^{K-1}\sum_{\mathclap{{\substack{\ell=1 \\\ell\equiv m_2a_1h-1\Mod{K}}}}}^{r-2}\ell\left(\frac{r-(m_2^{-1}m_1(\ell+1)+k_\ell r-h)}{K}-1\right) \\
&=\sum_{h=0}^{K-1}\sum_{\mathclap{{\substack{\ell=1 \\\ell\equiv m_2a_1h-1\Mod{K}}}}}^{r-2}\ell\frac{r-(m_2^{-1}m_1(\ell+1)+k_\ell r-h)}{K} - \sum_{h=s_t+1}^{K-1}\sum_{\mathclap{{\substack{\ell=1 \\\ell\equiv m_2a_1h-1\Mod{K}}}}}^{r-2}\ell
\\
&=-\sum_{h=0}^{K-1}\sum_{\mathclap{{\substack{\ell=1 \\ \ell\equiv m_2a_1h-1\Mod{K}}}}}^{r-2}\frac{m_2^{-1}m_1(\ell+1)l}{K} + \sum_{h=0}^{K-1}\sum_{\mathclap{{\substack{\ell=1 \\\ell\equiv m_2a_1h-1\Mod{K}}}}}^{r-2}\ell\frac{r(1-k_\ell)}{K}  +\mathop{\sum_{h=0}^{n-1}\sum_{\ell=1}^{r-2}}_{\ell\equiv m_2a_1h-1\Mod{K}}\frac{\ell h}{K}  - \mathop{\sum_{h=s_t+1}^{K-1}\sum_{\ell=1}^{r-2}}_{\ell\equiv m_2a_1h-1\Mod{K}}\ell
\\
&=-\sum_{\ell=1}^{r-2}\frac{m_2^{-1}m_1(\ell+1)l}{K} + \sum_{\ell=1}^{r-2}\ell\frac{r(1-k_\ell)}{K}  +\mathop{\sum_{h=0}^{K-1}\sum_{\ell=1}^{r-2}}_{\ell\equiv m_2a_1h-1\Mod{K}}\frac{\ell h}{K}  - \mathop{\sum_{h=s_t+1}^{K-1}\sum_{\ell=1}^{r-2}}_{\ell\equiv m_2a_1h-1\Mod{K}}\ell
\\
&= -m_2^{-1}m_1\frac{r(r-2)(r-1)}{3K}+\sum_{\ell=1}^{r-2} \ell\frac{r(1-k_\ell)}{K} +\mathop{\sum_{h=0}^{K-1}\sum_{\ell=1}^{r-2}}_{\ell\equiv m_2a_1h-1\Mod{K}}\frac{\ell h}{K}  - \mathop{\sum_{h=s_t+1}^{K-1}\sum_{\ell=1}^{r-2}}_{\ell\equiv m_2a_1h-1\Mod{K}}\ell. 
\end{aligned}
$}
\vspace{0.2cm}
\\
Adding up the 1-step, 2-step and 3-step admissible paths we arrive at the above adjacency matrix, here we also make use of \cite[Lemma 7.6 (i), (ii)]{errs}. 
\end{proof}
\end{lemma}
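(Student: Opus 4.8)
The plan is to read each entry of $A_{\overline{L}_q^7(r;\underline{m})}$ as a count of admissible paths in the skew-product graph, since by construction the number of edges of $\overline{L}_7^{r;\underline{m}}$ from $v_i^s$ to $v_j^t$ equals $n_{ij}^{st}$. With $\gcd(m_3,r)=K$ the graph has four levels and only level $3$ carries $K$ vertices, so the matrix is indexed by $v_0^0,v_1^0,v_2^0$ and the vertices $v_3^t$, $t=0,\dots,K-1$; the loops account for the diagonal $1$'s and the block $I_K$. First I would dispose of the entries not involving three-step paths: $A(v_1,v_2)=A(v_2,v_3)=r$ and $A(v_1,v_3)=r+\frac{r(r-1)}{2}=\frac{r(r+1)}{2}$ follow from the coprime counts in \cite[Lemma 7.6]{errs}, while $A(v_3,v_{4+t})=\frac{r}{K}$ is Lemma \ref{Lemma:1step}(1). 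The entries $y_t$ are the sum of the $\frac{r}{K}$ one-step paths of Lemma \ref{Lemma:1step}(1) and the two-step paths through level $2$ computed as in Lemma \ref{2step}(1), which together give $y_0=\frac{r(r+K)}{2K}$ and $y_t=\frac{r(r-K)}{2K}+a_2t\frac{r}{K}+rq_t$.

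The real content is the computation of $x_t$, the number of admissible paths from $(v_0,0)$ to $(v_3,t)$, which in addition requires the three-step paths touching all four levels. Here I would stratify by the landing point in level $1$: there are exactly $\ell$ admissible paths from $(v_0,0)$ to $(v_1,\ell m_1)$ not returning to $(v_0,0)$, followed by the unique edge to $(v_2,(\ell+1)m_1)$, so the number of three-step paths equals $\sum_{\ell=1}^{r-2}\ell P_\ell$, where $P_\ell$ counts admissible paths from $(v_2,(\ell+1)m_1)$ to $(v_3,t)$. To evaluate $P_\ell$ I would rerun the argument of Lemma \ref{2step}(1): writing $(v_2,(\ell+1)m_1)=(v_2,sm_2)$ with $s\equiv m_2^{-1}m_1(\ell+1)\pmod{r}$, the edge leaving $(v_2,sm_2)$ meets the cycle of $(v_3,t)$ according to the residue of $s$ modulo $K$, which (using $m_i^{-1}\equiv a_i\pmod{K}$) is exactly the condition $\ell\equiv m_2a_1h-1\pmod{K}$; this produces the two cases $P_\ell=\frac{r-(m_2^{-1}m_1(\ell+1)+k_\ell r-h)}{K}$ and the same quantity minus $1$, with $k_\ell$ normalising $s$ into $(0,r)$ and the cutoff at $h=s_t$.

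The bulk of the work is then the algebraic simplification of $\sum_{\ell}\ell P_\ell$. I would merge the two $h$-ranges into a single sum over all residue classes minus a correction over $h>s_t$, expand the numerator as $\frac{r(1-k_\ell)}{K}-\frac{m_2^{-1}m_1(\ell+1)}{K}+\frac{h}{K}$, and note that the middle term, summed over all classes, collapses to $\sum_{\ell=1}^{r-2}(\ell+1)\ell=\frac{r(r-1)(r-2)}{3}$, giving the leading term $-m_2^{-1}m_1\frac{r(r-2)(r-1)}{3K}$ while the remaining pieces stay as the residue-restricted sums of the statement. Finally I would add the one-step paths ($\frac{r}{K}$) and the two-step paths through levels $1$ and $2$, and reduce modulo $r$: the quadratic pieces of the form $\frac{r(r\pm K\mp 2)}{2K}$ are divisible by $r$, leaving the correction $\frac{r}{K}(a_2t+a_1t-1)$ for $t\ge 1$ and $-\frac{r}{K}$ for $t=0$. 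The step I expect to be the main obstacle is the residue-class bookkeeping in $P_\ell$ — pinning down the cutoff $s_t$ and the normalising shifts $k_\ell,q_t$, and correctly passing between congruences modulo $r$ and modulo $K$ — rather than any conceptual difficulty, since the surviving sums reduce to the standard arithmetic identities already used in the coprime case.
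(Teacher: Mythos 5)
Your proposal follows essentially the same route as the paper's proof: stratifying the three-step count by the landing vertex $(v_1,\ell m_1)$ in level 1, reducing to $\sum_{\ell=1}^{r-2}\ell P_\ell$ with $P_\ell$ evaluated by rerunning the argument of Lemma \ref{2step}(1) (same cutoff $s_t$, same normalisations $k_\ell$, same congruence $\ell\equiv m_2a_1h-1\pmod{K}$), merging the two $h$-ranges, collapsing the middle term to $\frac{r(r-1)(r-2)}{3}$, and then assembling the entries from the 1-step, 2-step and coprime counts of Lemmas \ref{Lemma:1step}, \ref{2step} and \cite[Lemma 7.6]{errs}. The only difference is cosmetic — you spell out the final reduction modulo $r$ that the paper leaves implicit — and your stated corrections ($-\tfrac{r}{K}$ for $t=0$, $\tfrac{r}{K}(a_1t+a_2t-1)$ for $t\ge 1$) agree with the lemma.
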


\begin{lemma}
\label{Lemma:m0}
Let $r\in \Nb$, $r\geq 2$ and let $\underline{m}=(m_0,m_1,m_2,m_3)$ be such that $\gcd(m_0,r)=K$ and $\gcd(m_i,r)=1, i\neq 0$. Then we may for each $0\leq l < r-1$ find $k_\ell,\in \Zb$ such that
\[
A_{\overline{L}_q^{7}(r;\underline{m})}=
\begin{psmallmatrix}
 1& \frac{r}{K} & y_0 & x_0
\\   0 &   1 & r & \frac{r(r+1)}{2}
\\   0 &  0 & 1 & r
\\    0 & 0 & 0 & 1
\end{psmallmatrix}
\]
where 
\[y_0=\frac{r(r-K)}{2K}+\frac{r}{K}\]
and 
\[
\begin{aligned}
x_0&\equiv -m_2^{-1}m_1\frac{r(r-2)(r-1)}{3K}+ \sum_{\ell=0}^{r-2} \ell\frac{r(1-k_\ell)}{K} \\ &-\sum_{h=0}^{K-1} \sum_{\mathclap{{\substack{\ell=1 \\\ell\equiv a_1h\pmod{K}}}}}^{r-2} \frac{a_1h+Kq_h}{K}\left(r-m_2^{-1}m_1(\ell+1)-rk_\ell \right) \Mod{r}.
\end{aligned}
\]
where $a_1$ is defined in Notation \ref{notation:ai}.
\end{lemma}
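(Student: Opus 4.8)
The plan is to read off the $4\times 4$ adjacency matrix entry by entry, since when $\gcd(m_0,r)=K$ and $\gcd(m_i,r)=1$ for $i\neq 0$ the graph $\overline{L}_7^{r;\underline{m}}$ has exactly the four vertices $v_0^0,v_1^0,v_2^0,v_3^0$: indeed $S_0=\{0\}$ and $|S_i|=1$ for $i\neq 0$. Each vertex is the base of exactly one loop, which gives the $1$'s on the diagonal, and every other edge runs from a lower level to a strictly higher one, which gives the upper-triangular shape. The off-diagonal entry in position $(i,j)$ is the number of admissible paths from $(v_i,0)$ to $(v_j,0)$ in $L_7\times_c\Z_r$, and I would organise the count of each such entry according to how many intermediate levels a path visits, i.e.\ into $1$-step, $2$-step and $3$-step contributions.

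First I would dispose of the entries involving only levels $1,2,3$. Since $\gcd(m_i,r)=1$ there, this is precisely the fully coprime situation treated in \cite[Lemma 7.6]{errs}, and reading off the $1$-step and $2$-step counts gives $A(v_1^0,v_2^0)=A(v_2^0,v_3^0)=r$ and $A(v_1^0,v_3^0)=\tfrac{r(r+1)}{2}$. For the entries out of the special level $0$, the value $A(v_0^0,v_1^0)=\tfrac{r}{K}$ is the $1$-step count of Lemma \ref{Lemma:1step}(2), while $A(v_0^0,v_2^0)=y_0$ is obtained by adding that same $1$-step count $\tfrac{r}{K}$ to the $2$-step count $\tfrac{r(r-K)}{2K}$ of Lemma \ref{2step}(2) with $t=0$.

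The substance of the lemma is the entry $x_0=A(v_0^0,v_3^0)$, and here I would follow the fibering argument of Lemma \ref{Lemma:m3} almost verbatim, only interchanging the role of the special level. Each $3$-step path is decomposed through its landing vertex $(v_1,\ell m_1)$ at level $1$ and the single forced edge into $(v_2,(\ell+1)m_1)$, reducing matters to the count $P_\ell$ of admissible paths from the level-$2$ landing vertex to $(v_3,0)$; this last count is the coprime computation already appearing in Lemma \ref{2step}, producing the factors $\tfrac{1}{K}\bigl(r-m_2^{-1}m_1(\ell+1)-rk_\ell\bigr)$, where $k_\ell$ is chosen so the index stays in range. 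The novelty, and what distinguishes this case from Lemma \ref{Lemma:m3}, is that the source level $0$ is now the one with $\gcd=K$: the number of admissible initial segments from $(v_0,0)$ up to the prescribed level-$1$ vertex is controlled by the period $r/K$ and by the congruence $\ell\equiv a_1 h\pmod K$ with $a_1=m_1^{-1}$ in $\Z_K^\times$, which is exactly the source of the restricted double sum and of the coefficient $\tfrac{a_1 h+Kq_h}{K}$ in the stated formula. Summing $\sum_{\ell}\ell\,P_\ell$ then splits off the cubic Faulhaber term coming from $\sum_\ell \ell(\ell+1)$, which reduces modulo $r$ to $-m_2^{-1}m_1\tfrac{r(r-2)(r-1)}{3K}$, the linear term $\sum_{\ell=0}^{r-2}\ell\,\tfrac{r(1-k_\ell)}{K}$, and the boundary correction carrying the congruence constraints; finally I would add back the $1$- and $2$-step contributions from the earlier lemmas to arrive at $x_0$.

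I expect the main obstacle to be purely the bookkeeping inside this $3$-step count: keeping admissibility at the special level $0$ straight (so that only the $r/K$ loop-vertices are used and the residue condition $\ell\equiv a_1h\pmod K$ is enforced), choosing the integer representatives $k_\ell,q_h$ consistently so that every intermediate index lands in its intended interval, and carrying out the reduction of the cubic sum to $\tfrac{r(r-1)(r-2)}{3}$ while tracking everything modulo $r$. Once the parametrisation is fixed, every individual sum is of the same type already evaluated in Lemmas \ref{Lemma:1step}, \ref{2step} and \ref{Lemma:m3}, so no genuinely new idea beyond a careful transcription of the special-level conditions should be required.
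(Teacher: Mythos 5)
Your overall strategy is the paper's: the diagonal entries and upper-triangular shape come from the structure of $\overline{L}_7^{r;\underline{m}}$, the five easy off-diagonal entries are read off from Lemma \ref{Lemma:1step}, Lemma \ref{2step} and \cite[Lemma 7.6]{errs} exactly as you say, and $x_0$ is attacked, as in the paper, by decomposing each $3$-step admissible path through its level-$1$ landing vertex $(v_1,m_1\ell)$ and the single forced edge into $(v_2,m_1(\ell+1))$. But your execution of that count contains a concrete error: the factor $\tfrac1K$ is attached to the wrong side of the product. Since $\gcd(m_2,r)=\gcd(m_3,r)=1$, the number of admissible paths from $(v_2,m_1(\ell+1))$ to $(v_3,0)$ is the integer $r-m_2^{-1}m_1(\ell+1)-rk_\ell$; your $P_\ell=\tfrac1K\left(r-m_2^{-1}m_1(\ell+1)-rk_\ell\right)$ is not a path count (it need not even be an integer). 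Conversely, the number of admissible initial segments from $(v_0,0)$ to $(v_1,m_1\ell)$ is not $\ell$: entry into level $1$ happens only from the $r/K$ vertices of the level-$0$ cycle, hence only at vertices $(v_1,m_1\tilde\ell)$ with $\tilde\ell\equiv 0\pmod{K}$, and the count is $\lfloor \ell/K\rfloor$, written in the paper as $\tfrac{\ell-(a_1h+Kq_h)}{K}$ for $\ell\equiv a_1h\pmod{K}$. So interchanging the special level changes \emph{both} factors, whereas your sketch, despite your correct remark that the initial segments are governed by the period $r/K$ and the congruence $\ell\equiv a_1h\pmod{K}$, keeps the shape $\sum_\ell \ell P_\ell$ of Lemma \ref{Lemma:m3} and merely moves the $\tfrac1K$ into $P_\ell$; that sum is not the $3$-step count. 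Read charitably, your ``main term plus boundary correction'' agrees numerically with the paper's count, because $\tfrac{\ell-(a_1h+Kq_h)}{K}(r-\cdots)=\tfrac{\ell}{K}(r-\cdots)-\tfrac{a_1h+Kq_h}{K}(r-\cdots)$, but the correction can only be written down after one has the correct initial-segment count, and producing that count is precisely the substance of the proof.

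A secondary point of divergence: you finish by adding back the $1$- and $2$-step contributions, whereas the paper's proof identifies $x_0$ with the $3$-step count alone. The two $2$-step contributions $\tfrac{r(r-K)}{2K}$ do cancel modulo $r$, but the $1$-step count $\tfrac{r}{K}$ does not, so carrying out your plan literally yields the displayed formula shifted by $\tfrac{r}{K}$, and you would have to reconcile that with the statement being proved. (The shift is immaterial for the classification results, where $x_0$ only ever enters multiplied by $K$ or through differences $x_0'-x_0$, but it is a discrepancy your write-up does not address.)
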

\begin{proof} 
Notice that we only need to calculate the number of 3-step admissible paths from $(v_0,t)$ to $(v_3,0)$, where $t=0$, but for the sake of completeness, we will count the remaining paths as well.

For each $0\leq t \leq K-1$ and $1 \leq \ell \leq r-1$, there is a path from $(v_0,t)$ to $(v_1,m_1\ell)$ for which the latter is the first vertex in the first level which is reached by the path, if and only if $m_1\ell \equiv t \pmod{K}$. Consequently, the total number of paths from $(v_0,t)$ to a general $(v_1,m_1\ell)$ not passing through $(v_1,0)$ is the same as the number of $1\leq \tilde{l}\leq \ell$ for which $m_1\tilde{\ell} \equiv t \pmod{K}$. Let $q_\ell$ be the integer such that $0 < a_1h + q_\ell K \leq K$, then $c_t  \coloneqq a_1t+q_tK$ is the least value, $\ell$, for which there is an edge from the cycle starting at $(v_0,t)$ to $(v_1,m_1\ell)$. The number of paths from $(v_0,t)$ to $(v_1,m_1\ell)$ not passing through $(v_1,0)$ is then given as follows:
\[
\frac{l-(a_1h+Kq_h)}{K}
\]
if $\ell m_1\equiv h \Mod{K}$ for $0\leq h<c_t$ and 
\[
\frac{l-(a_1h+Kq_h)}{K}+1
\]
if $\ell m_1\equiv h \Mod{K}$ for $c_t\leq h<K$. 
There is precisely one edge from $(v_1,\ell m_1)$ to $(v_2,m_1(\ell+1))$. We can express $(v_2,l(m_1+1))$ as $(v_2,sm_2)$ i.e. $m_1(\ell+1)\equiv sm_2 \Mod{r}$. Let $k_\ell$ be such that $0<m_2^{-1}m_1(\ell+1)+rk_\ell<K$. Then the number of paths from $(v_1,\ell m_1)$ to $(v_3,0)$ is $r-(m_2^{-1}m_1(\ell+1)+rk_\ell).$ The total number of 3-step admissible paths becomes: 
\[
\begin{aligned}
&\sum_{h=0}^{c_t-1} \sum_{\mathclap{{\substack{\ell=1 \\\ell\equiv a_1h\pmod{K}}}}}^{r-2} \frac{l-(a_1h+Kq_h)}{K}\left(r-m_2^{-1}m_1(\ell+1)-rk_\ell \right)\\ 
&+\sum_{h=c_t}^{K-1} \sum_{\mathclap{{\substack{\ell=1 \\\ell\equiv a_1h\pmod{K}}}}}^{r-2} \left(\frac{\ell-(a_1h+Kq_h)}{K}+1\right)\left(r-m_2^{-1}m_1(\ell+1)-rk_\ell \right) \\
&= -m_2^{-1}m_1\frac{r(r-2)(r-1)}{3K}+ \sum_{\ell=0}^{r-2} \ell\frac{r(1-k_\ell)}{K} \\ &-\sum_{h=0}^{K-1} \sum_{\mathclap{{\substack{\ell=1 \\\ell\equiv a_1h\pmod{K}}}}}^{r-2} \frac{a_1h+Kq_h}{K}\left(r-m_2^{-1}m_1(\ell+1)-rk_\ell \right)+\sum_{h=c_t}^{K-1} \sum_{\mathclap{{\substack{\ell=1 \\\ell\equiv a_1h\pmod{K}}}}}^{r-2} \left(r-m_2^{-1}m_1(\ell+1)-rk_\ell \right) \\
\end{aligned}
\]
For $t=0$ we have $c_t=K$ hence the number of 3-step admissible paths is 
\vspace{0.2cm}
\\
$
-m_2^{-1}m_1\frac{r(r-2)(r-1)}{3K}+ \sum_{\ell=0}^{r-2} \ell\frac{r(1-k_\ell)}{K} -\sum_{h=0}^{K-1} \sum_{\mathclap{{\substack{\ell=1 \\\ell\equiv a_1h\pmod{K}}}}}^{r-2} \frac{a_1h+Kq_h}{K}\left(r-m_2^{-1}m_1(\ell+1)-rk_\ell \right).
$
\end{proof}

\begin{lemma}\label{lemmam2}
Let $r\in \Nb$, $r\geq 2$ and let $\underline{m}=(m_0,m_1,m_2,m_3)$ be such that $\gcd(m_2,r)=K$ and $\gcd(m_i,r)=1, i\neq 2$. Then 
\[
A_{\overline{L}_q^{7}(r;\underline{m})}=
\begin{psmallmatrix}
1 & r & \frac{r(r+n)}{2K} & \frac{r(r-n)}{2K}+ a_1\frac{r}{K} & \dots & \frac{r(r-K)}{2K}+ a_1(K-1)\frac{r}{K}  & x \\
0 & 1 & \frac{r}{K} & \frac{r}{K} & \dots & \frac{r}{K} & \frac{r(r+K)}{2K}\\
0 & 0 & & & & & \frac{r}{K}
\\
\vdots & \vdots & & & I_{K} & & \vdots
\\
0 & 0 &  & & & &\frac{r}{K}
\\
0 & 0 & 0& 0& \cdots & 0 & 1
\end{psmallmatrix}
\]
where
\[
x\equiv -m_1^{-1}m_2\frac{r(2r-K)(r-K)}{6K^2}+m_1^{-1}\frac{r(r-K)(K-1)}{4K}+\frac{r(r-1)}{2} \Mod{r}.
\]
and $a_1$ is defined in Notation \ref{notation:ai}.
\begin{proof}
We will now calculate the number of 3-step admissible paths from $(v_0,0)$ to $(v_0,3)$. First we have $m_1^{-1}m_2\ell-1+tm_1^{-1}+rs_t$ paths from $(v_0,0)$ to each $(v_1,\ell m_2-m_1+t)$ for $t=0,..,K-1$, where $s_\ell$ is such that $0<m_1^{-1}m_2\ell-1+tm_1^{-1}+rs_\ell<r$. The vertex $(v_1,\ell m_2-m_1+t)$ is connected to $(v_2,\ell m_2+t)$ by a single edge and there are $\frac{r}{K}-\ell$ paths from $(v_2,\ell m_2+t)$ to $(v_3,0)$. The total number of 3-step admissible paths becomes 
\[
\begin{aligned}
&\sum_{t=0}^{K-1}\sum_{\ell=1}^{\frac{r-K}{K}} \left(m_1^{-1}m_2\ell-1+tm_1^{-1}+rs_\ell \right) \left( \frac{r}{K}-\ell \right)
\\
&\equiv K \sum_{\ell=1}^{\frac{r-K}{K}} -\ell\left(m_1^{-1}m_2\ell-1 \right)+\sum_{t=0}^{K-1}\sum_{\ell=1}^{\frac{r-K}{K}} tm_1^{-1}\left( \frac{r}{K}-\ell \right) \Mod{r}
\\
&\equiv -m_1^{-1}m_2 \frac{r(2r-K)(r-K)}{6K^2} + \frac{r(r-K)}{2K}+\sum_{t=0}^{K-1} tm_1^{-1}\frac{r(r-K)}{2K^2} \Mod{r}
\\
&\equiv -m_1^{-1}m_2 \frac{r(2r-K)(r-K)}{6K^2} + \frac{r(r-K)}{2K}+ m_1^{-1}\frac{r(r-K)(K-1)}{4K} \Mod{r}.
\end{aligned} 
\]
\end{proof}

\end{lemma}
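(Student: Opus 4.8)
The plan is to read off the matrix entry-by-entry, stratifying admissible paths by their number of steps, and to reserve the only substantial computation for the single entry $x=A(v_0^0,v_3^0)$.

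I would first fix the vertex ordering $v_0^0,\,v_1^0,\,v_2^0,\dots,v_2^{K-1},\,v_3^0$. Each vertex carries exactly one loop, which produces all the diagonal $1$'s and the central block $I_K$. Every off-diagonal entry is the total number of admissible paths between the corresponding skew-product vertices, and I would compute it as the sum of its $1$-step, $2$-step and (where relevant) $3$-step contributions. The entries into and out of the special level, $A(v_1^0,v_2^t)=r/K$ and $A(v_2^t,v_3^0)=r/K$, are immediate from Lemma \ref{Lemma:1step}; the entry $A(v_0^0,v_2^t)$ adds the $1$-step count $r/K$ to the $2$-step-through-level-$1$ count of Lemma \ref{2step}(1), whose representative mod $r$ is $\tfrac{r(r-K)}{2K}+a_1 t\,\tfrac rK$ for $t\ge 1$ and $\tfrac{r(r+K)}{2K}$ for $t=0$. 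Likewise $A(v_0^0,v_1^0)=r$ and $A(v_1^0,v_3^0)=r+\tfrac{r(r-K)}{2K}=\tfrac{r(r+K)}{2K}$ follow from the coprime count of \cite[Lemma 7.6]{errs} together with Lemma \ref{2step}(3). This fills in every entry except $x$.

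For $x$ I would enumerate the admissible paths from $(v_0,0)$ to $(v_3,0)$. The paths that are not $3$-step contribute the $1$-step count $r$, the all-coprime $2$-step-through-level-$1$ count $\tfrac{r(r-1)}2$ (from \cite[Lemma 7.6]{errs}), and the $2$-step-through-level-$2$ count $\tfrac{r(r-K)}{2K}$ (from Lemma \ref{2step}(3)). The $3$-step paths are exactly those meeting all four levels, and I would stratify them by the first vertex $(v_2,\ell m_2+t)$ at which they reach the special level, with $t\in\{0,\dots,K-1\}$ indexing the cycle and $\ell$ the position within it. Each such path factors uniquely as a path from $(v_0,0)$ to the level-$1$ vertex $(v_1,\ell m_2-m_1+t)$ feeding that cycle, the single edge into $(v_2,\ell m_2+t)$, and a path from $(v_2,\ell m_2+t)$ down to $(v_3,0)$. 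The coprime analysis counts the first factor as $m_1^{-1}m_2\ell-1+tm_1^{-1}+rs_\ell$ and the third as $\tfrac rK-\ell$, so the number of $3$-step paths is
\[
\sum_{t=0}^{K-1}\sum_{\ell=1}^{(r-K)/K}\left(m_1^{-1}m_2\ell-1+tm_1^{-1}+rs_\ell\right)\left(\tfrac rK-\ell\right).
\]

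The main obstacle is the evaluation of this double sum. I would first observe that every summand carrying the factor $rs_\ell$ is a multiple of $r$, so it drops out modulo $r$; this is precisely why the conclusion is stated as a congruence. I would then split off the part linear in $t$ and evaluate the inner sums using the closed forms for $\sum_\ell\ell$ and $\sum_\ell\ell^2$ over $1\le\ell\le(r-K)/K$ (so that $L+1=r/K$ and $2L+1=(2r-K)/K$), together with $\sum_{t=0}^{K-1}t=\tfrac{K(K-1)}2$; the quadratic term yields $-m_1^{-1}m_2\,\tfrac{r(2r-K)(r-K)}{6K^2}$ and the linear-in-$t$ term yields $m_1^{-1}\tfrac{r(r-K)(K-1)}{4K}$. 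The bookkeeping to watch is the leftover $\tfrac{r(r-K)}{2K}$ produced by the quadratic sum: modulo $r$ it cancels against the $2$-step-through-level-$2$ contribution $\tfrac{r(r-K)}{2K}$ (their sum $\tfrac{r(r-K)}{K}$ being a multiple of $r$), the $1$-step contribution $r$ vanishes, and only the all-coprime $2$-step term $\tfrac{r(r-1)}2$ survives. Adding the $1$-step, $2$-step and $3$-step totals then yields the stated value of $x$ and completes the matrix.
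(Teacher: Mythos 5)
Your proposal is correct and takes essentially the same approach as the paper: the off-diagonal entries are assembled from the $1$-step and $2$-step counts of Lemma \ref{Lemma:1step}, Lemma \ref{2step} and \cite[Lemma 7.6]{errs}, and the entry $x$ comes from exactly the same double sum over $3$-step paths stratified by their entry point $(v_2,\ell m_2+t)$ into level $2$, with the $rs_\ell$ terms discarded modulo $r$ and the quadratic and linear-in-$t$ parts evaluated by the same closed forms. The only difference is that you make explicit the final bookkeeping (the cancellation of $\tfrac{r(r-K)}{2K}+\tfrac{r(r-K)}{2K}\equiv 0$ and of the $1$-step term $r$ modulo $r$), which the paper leaves implicit after computing the $3$-step count.
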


We state the final case without proof, as the proof is similar to that of Lemma \ref{lemmam2}. 
\begin{lemma}\label{lemmam1}
Let $r\in \Nb$, $r\geq 2$ and let $\underline{m}=(m_0,m_1,m_2,m_3)$ be such that $\gcd(m_1,r)=K$ and $\gcd(m_i,r)=1, i\neq 1$. Then 
\[
A_{\overline{L}_q^{7}(r;\underline{m})}=
\begin{psmallmatrix}
1 & \frac{r}{K} & \frac{r}{K} & \dots & \frac{r}{K} & \frac{r(r+K)}{2K} & x \\
0 &  &  &  &  & \frac{r}{K} & \frac{r(r-n)}{2K}+\frac{r}{K} \\
0 &  &  &  &  & \frac{r}{K} & \frac{r(r-K)}{2K}-\frac{r}{K}a_2+\frac{r}{K}
\\
0 & & & I_K  & & \vdots & \vdots
\\
\vdots & & & & & \frac{r}{K} & \frac{r(r-n)}{2K}-\frac{r}{K}a_2(K-1)+\frac{r}{K}
\\
0 & 0 & 0  & \dots &  0 &  1 & r
\\
0 & 0 & 0& 0& \cdots & 0 & 1
\end{psmallmatrix}
\]
where
\[
x\equiv -m_2^{-1}m_1\frac{r(2r-K)(r-K)}{6K^2}+m_2^{-1}\frac{r(r-K)(K-1)}{4K}+\frac{r(r-1)}{2} \Mod{r}.
\]
and $a_2$ is defined in Notation \ref{notation:ai}.
\end{lemma}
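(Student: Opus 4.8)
The plan is to compute each entry of $A_{\overline{L}_7^{r;\underline{m}}}$ as the number $n_{ij}^{st}$ of admissible paths between the corresponding pair of vertices of $\overline{L}_7^{r;\underline{m}}$, organised by the number of intermediate levels the paths touch. With $\gcd(m_1,r)=K$, level $1$ is the only fat level, carrying $K$ cycles, while levels $0,2,3$ each contribute a single vertex. Every entry except the top-right corner $x$ records paths touching at most one intermediate level, and each such entry is supplied by Lemma \ref{Lemma:1step} (the $\frac{r}{K}$ counts into and out of the fat level $1$), Lemma \ref{2step} (the $2$-step counts through a single level, giving the $\frac{r(r\pm K)}{2K}$ and $a_2$-dependent entries), and \cite[Lemma 7.6]{errs} (the purely coprime single jumps). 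Recording all of these first leaves only $x=n_{03}^{00}$, the number of admissible paths from $(v_0,0)$ to $(v_3,0)$, to be determined.

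The corner $x$ splits as the sum of its $1$-step contribution (a direct jump $0\to3$), its two $2$-step contributions ($0\to1\to3$ through the fat level, from Lemma \ref{2step}(3), and $0\to2\to3$ through the coprime level $2$, from \cite[Lemma 7.6]{errs}), and its $3$-step contribution along $0\to1\to2\to3$. Only the last is new, and I would compute it as in the proof of Lemma \ref{lemmam2}, with the fat level moved from level $2$ to level $1$. Concretely, I would parametrise a $3$-step path by the cycle index $t\in\{0,\dots,K-1\}$ of the fat level $1$ through which it runs and by the number $\ell$ of loops it makes there before the single edge $(v_1,\cdot)\to(v_2,\cdot)$ into level $2$; for fixed $(t,\ell)$ the number of paths from $(v_0,0)$ to the exit vertex of level $1$ (through the coprime level $0$) times the number of paths from the resulting vertex of level $2$ onward to $(v_3,0)$ (through the coprime levels $2$ and $3$) gives the summand, and the $3$-step count is the double sum over $t$ and $\ell$.

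Evaluating this double sum is the bulk of the work. As in Lemma \ref{lemmam2}, after collecting terms modulo $r$ the inner sums produce a $\sum\ell$ and a $\sum\ell^2$ over $\ell=1,\dots,\frac{r-K}{K}$, which yield the $\frac{r(r-K)}{2K}$- and $\frac{r(2r-K)(r-K)}{6K^2}$-type expressions, while the outer sum over the $K$ cycles produces the $\frac{r(r-K)(K-1)}{4K}$ term; combining with the $1$- and $2$-step contributions then gives the stated closed form for $x$, the $\frac{r(r-1)}{2}$ term arising from this combination. Because here it is level $1$ rather than level $2$ that is fat, the weight entering the leading term is $m_2^{-1}m_1$ in place of the $m_1^{-1}m_2$ of Lemma \ref{lemmam2}, and $m_2^{-1}$ replaces $m_1^{-1}$ in the middle term; this is the only structural change.

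The main obstacle is the same delicate bookkeeping that appears in Lemma \ref{lemmam2}: correctly tracking, for each residue class modulo $K$, which loop counts admit the connecting edge into level $2$, and ensuring that the integer shifts of the form $q_t$ and $s_\ell$ (which are multiplied by $\frac{r}{K}\cdot K=r$ and so vanish modulo $r$) do not corrupt the congruence. A cleaner but less self-contained alternative would be to exploit the order-reversing symmetry of the skew product $L_7\times_c\Z_r$ that interchanges levels $1$ and $2$, transporting Lemma \ref{lemmam2} to the present case and thereby swapping $m_1\leftrightarrow m_2$; I would use this only after checking that the reflection preserves admissibility, and hence the path counts.
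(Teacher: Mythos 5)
Your proposal is correct and is essentially the paper's own approach: the paper states this lemma without proof, remarking only that the argument is ``similar to that of Lemma \ref{lemmam2}'', which is precisely your primary plan (decompose each entry into $1$-, $2$- and $3$-step admissible path counts and redo the Lemma \ref{lemmam2} computation with the fat level moved to level $1$), and your fallback via the level-swapping reflection is exactly the anti-transpose identity $A_{\overline{L}_q^{7}(r;\underline{m})}=JA_{\overline{L}_q^{7}(r;\underline{m}')}^TJ$ that the paper itself invokes in the proof of Theorem \ref{Thm:Main}(1). One small caveat on your third paragraph: in the natural parametrisation the cycle index $t$ enters through the count \emph{out of} level $2$ (via $r-m_2^{-1}\bigl(t+(\ell+1)m_1\bigr)-rs_{t,\ell}$) rather than through the count into the fat level (which is just $\ell$), so the cross term comes out as $-m_2^{-1}\tfrac{r(r-K)(K-1)}{4K}$ together with an extra summand $-m_2^{-1}m_1\tfrac{r(r-K)}{2K}$, and one needs a short parity argument (splitting on the parities of $\tfrac{r}{K}$ and $K$) to see that this expression is congruent modulo $r$ to the stated form of $x$ --- the signs do not fall out quite as directly as your sketch suggests.
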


\begin{remark}\label{remark:rinvariant}
It follows directly from the adjacency matrices and Lemma \ref{Lemma:1step}, that $r$ is an invariant for $n>1$ whenever two quantum lens spaces share the same ideal structure and at most one of the weights are not coprime to the order of the acting group. To see this, note that it follows from a simple computation that $SL_\mathcal{P}$-equivalence preserves the elements directly above the main diagonal. Let now $C(\overline{L}_q^{2n+1}(r;\underline{m}))$ and $C(\overline{L}_q^{2n+1}(r';\underline{n}))$ be two isomorphic quantum lens spaces for which only a single weight, say $m_i$ and $n_i$, $i\in\lbrace 0, \dots n \rbrace$ is coprime to the order of the acting group. If $i>0$, the ideal structure guarantees that $\gcd(m_i,r)=\gcd(n_i,r')$ and by the structure of the adjacency matrices we obtain directly that $r=r'$. If $i=0$, then we obtain from the adjacency matrix that  $\frac{r}{\gcd(m_0,r)}=\frac{r'}{\gcd(n_0,r')}$ and $r=r'$. From which it follows that $\gcd(m_0,r)=\gcd(n_0,r')$. 
Hence $r$ and the greatest common divisor are invariant whenever $n>1$.
If $n=1$ and $i=1$ we obtain that $r$ is an invariant by the same argument as for $n>1$ and $i\neq 0$. If $n=1$ and $i=0$ then something else happens. Let $\gcd(m_0,r)=K$ in this case 
$$
A_{\overline{L}_3^{r;(m_0,m_1)}}=\begin{pmatrix}
1 & \frac{r}{K} \\
0 & 1
\end{pmatrix}.
$$
We can then obtain the same $C^*$-algebra for different values of the order of the acting group and the greatest common divisor. For example we have that $A_{\overline{L}_3^{4,(2,1)}}=A_{\overline{L}_3^{2,(1,1)}}$ and hence $C^*(\overline{L}_3^{(2,(1,1))})= C^*(\overline{L}_3^{(4,(2,1))})$.  
\end{remark}

\section{The invariant}\label{invariant}
We are now ready to prove Theorem \ref{Thm:Main}.
\begin{proof}[Proof of Theorem \ref{Thm:Main}]
(0). 
Assume that the quantum lens spaces coming from the weights $\underline{m}=(m_0,m_1,m_2,m_3)$ and $\underline{n}=(n_0,n_1,n_2,n_3)$ are isomorphic. We denote by $x_0,y_0,a_1$ and $x_0',y_0',a_1'$ the elements coming from Lemma \ref{Lemma:m0} corresponding to the system of weights $\underline{m}$ and $\underline{n}$ respectively. See Notation \ref{notation:ai} for the definition of $a_1$.

Consider the following expression
\vspace{0.2cm}\\
\resizebox{\columnwidth}{!}{$
\begin{aligned}
Kx_0 \equiv -m_2^{-1}m_1 \frac{r(r-1)(r-2)}{3} + \sum_{h=0}^{K-1} \sum_{\mathclap{{\substack{\ell=1 \\\ell\equiv a_1h\pmod{K}}}}}^{r-2} {(a_1h+Kq_h)}\left(m_2^{-1}m_1(\ell+1)\right) \Mod{r}. 
\end{aligned}$}
\vspace{0.2cm}\\
Let $b_h$ be such that $1 \leq a_1h+b_hK \leq K$. Then
\vspace{0.2cm}
\\
\resizebox{\columnwidth}{!}{$
\begin{aligned}
\sum_{h=0}^{K-1} \sum_{\mathclap{{\substack{\ell=1 \\\ell\equiv a_1h\pmod{K}}}}}^{r-2} {a_1h+Kq_h}\left(m_2^{-1}m_1(\ell+1)\right) \equiv \sum_{h=0}^{K-1}\left(a_1h+Kq_h\right)m_2^{-1}m_1 \sum_{k=0}^{\frac{r-K}{K}} \left(a_1h+b_hK+kK+1\right) \pmod{r}
\end{aligned}$}
\vspace{0.2cm}\\
remarking that the corresponding terms for \text{$\ell=0$} and \text{$\ell =r-1$} vanish modulo $r$. We have \\
\resizebox{\columnwidth}{!}{$
\begin{aligned}
&\sum_{h=0}^{K-1}\left(a_1h+Kq_h\right)m_2^{-1}m_1 \sum_{k=0}^{\frac{r-K}{K}} \left(a_1h+b_hK+kK+1\right)
\\
= & \sum_{h=0}^{K-1}\left(a_1h+Kq_h\right)m_2^{-1}m_1\left( \frac{r}{K}\left(a_1h+b_hK+1 \right) + \frac{r(r-K)}{2K} \right) 
\\
\equiv & \sum_{h=0}^{K-1}\left(a_1h+Kq_h\right)m_2^{-1}m_1\left( \frac{r}{K}\left(a_1h+1 \right) + \frac{r(r-K)}{2K} \right) \pmod{r}
\\
\equiv &\sum_{h=0}^{K-1}a_1hm_2^{-1}m_1\left( \frac{r}{K}\left(a_1h+1 \right) + \frac{r(r-K)}{2K} \right)+ \sum_{h=0}^{K-1}q_hm_2^{-1}m_1\left( \frac{r(r-K)}{2} \right) \pmod{r}
\\
=&a_1m_2^{-1}m_1 \frac{r(K-1)}{2}\left(1+ \frac{(r-K)}{2}\right)+ a_1^2m_2^{-1}m_1\frac{r(K+1)(2K+1)}{6}+\sum_{h=0}^{K-1}q_hm_2^{-1}m_1\left( \frac{r(r-K)}{2} \right) \pmod{r}
\end{aligned}
$}
Notice, that if the parity of $K$ and $r$ is the same, $r-K$ is even and hence the leftmost and rightmost terms will vanish modulo $r$ when we subtract the corresponding terms of $Kx_0'$. (For the first term, note that then either $K-1$ or $\left(a_1m_2^{-1}m_1 - a_1'n_2^{-1}n_1 \right)$ is even).
Additionally, it follows from a computation that 
$$a_1^2m_2^{-1}m_1\frac{r(K+1)(2K+1)}{6} \equiv a_1m_2^{-1}\frac{r(K+1)(2K+1)}{6} \pmod{r}$$
So if the parity of $r$ and $K$ is the same, we have
\vspace{0.2cm}
\\
\resizebox{\columnwidth}{!}{$
\begin{aligned}
K(x_0'-x_0) \equiv  \left(m_2^{-1}m_1-n_2^{-1}n_1 \right) \frac{r(r-1)(r-2)}{3}+\left(a_1'n_2^{-1}-a_1m_2^{-1}\right)\frac{r(K+1)(2K+1)}{6} \pmod{r}
\end{aligned}$}
We now consider the case where $r$ is even and $K$ is odd. 
We first let 
$$J_0\coloneqq \sum_{h=0}^{K-1}q_hm_2^{-1}m_1\left( \frac{r-K}{K} \right)$$ so that we may rewrite the last term as $J_0K \frac{r}{2}$ and note that $J_0$ is an integer since $K$ divides $r-K$. Consider next the first term. Since $K-1$ is even, we have
\vspace{0.2cm}
\\
\resizebox{\columnwidth}{!}{$
\begin{aligned}
\left(a_1m_2^{-1}m_1-a_1'n_2^{-1}n_1\right) \frac{r(K-1)}{2}\left(1+ \frac{(r-K)}{2}\right) \equiv \left(a_1m_2^{-1}m_1-a_1'n_2^{-1}n_1\right) \frac{r(K-1)}{2}\frac{(r-K)}{2} \pmod{r}\end{aligned}$}
\vspace{0.2cm}
\\
which we may by a similar argument conclude has the form $KJ_1 \frac{r}{2}$ for an integer $J_1$.
It follows that $K(x_0'-x_0)$ is equivalent to
\begin{equation}
\left(m_2^{-1}m_1-n_2^{-1}n_1 \right) \frac{r(r-1)(r-2)}{3}+\left(a_1'n_2^{-1}-a_1m_2^{-1}\right)\frac{r(K+1)(2K+1)}{6} + KJ \frac{r}{2}    \label{EQ:KJ}
\end{equation}
modulo $r$ where $J=J_0-J_0'+J_1$. Note that from the analysis above we can conclude that $KJ\frac{r}{2}\equiv 0\pmod{r}$ if the parity of $r$ and $K$ is the same.
\\
By an application of $SL_\mathcal{P}$-equivalence (Corollary \ref{lensclassification}) we have that there exist integers $v_1,v_2,u$ such that
$$ x_0' \equiv \frac{r}{K}v_1 +v_2y_0+x_0+u\frac{r(r+1)}2 \pmod{r} $$
And hence
$$
\begin{aligned}
K(x_0'-x_0) \equiv v_2 \frac{r(r-K)}{2} +u \frac{r(r+1)K}{2} \pmod{r}
\end{aligned}
$$
If $r$ and $K$ have the same parity then the right-hand side is congruent to zero modulo $r$ and $KJ\frac{r}{2}$ is congruent to zero modulo $r$.

Assume now that $r$ and $K$ have opposite parity (in particular, $K$ is odd and $r$ is even). We then obtain 
$$
\begin{aligned}
\left(m_2^{-1}m_1-n_2^{-1}n_1 \right) \frac{r(r-1)(r-2)}{3}+\left(a_1'n_2^{-1}-a_1m_2^{-1}\right)\frac{r(K+1)(2K+1)}{6} = M \frac{r}{2}
\end{aligned}
$$
where $M=v_2(r-K)+u(r+1)K-KJ$. If $3\nmid r$ then it is easy to see (remarking that the left-hand-side is then an integer multiple of $r$) that $M$ is even, recalling that in this case, either $3\mid (K+1)$ or $3\mid (2K+1)$.
If $3\mid r$, then the expression may be rewritten as
$$N\frac{r}{3}=M\frac{r}{2}$$
for some integer $N$, and consequently
$$M= 2 \frac{N}{3}$$
from which it follows that $M$ is even, and hence
\begin{equation}\resizebox{0.9\columnwidth}{!}{
\label{Eq:M0Inv}
   $ \left(m_2^{-1}m_1-n_2^{-1}n_1 \right) \frac{r(r-1)(r-2)}{3}+\left(a_1'n_2^{-1}-a_1m_2^{-1}\right)\frac{r(K+1)(2K+1)}{6} \equiv 0 \pmod{r}.$}
\end{equation}

Consider the term $$\left(a_1'n_2^{-1}-a_1m_2^{-1}\right)\frac{r(K+1)(2K+1)}{6}.$$
It follows from an easy computation, that if $3\nmid K$ then this is congruent to zero modulo $r$, and consequently the invariant coincides with the analoguous invariant for the other three cases, and we may make the same conclusion as in \cite[Corollary 7.9]{errs}. If $3$ divides both $r$ and $K$, then $a_1 \equiv m_1 \pmod{3}$, $m_2^{-1}\equiv m_2 \pmod{3}$ etc, while neither of $(K+1), (2K+1), (r-1)$ or $(r-2)$ is divisible by 3. So we in this case have
\vspace{0.2cm}
\\
\resizebox{\columnwidth}{!}{
 $\left(m_2^{-1}m_1-n_2^{-1}n_1 \right) \frac{r(r-1)(r-2)}{3}+\left(a_1'n_2^{-1}-a_1m_2^{-1}\right)\frac{r(K+1)(2K+1)}{6} \equiv 0 \pmod{r}$}
 \vspace{0.1cm}
 \\
 if and only if
 $$2\left(m_2^{-1}m_1-n_2^{-1}n_1 \right)(r-1)(r-2) + \left(a_1'n_2^{-1}-a_1m_2^{-1}\right)(K+1)(2K+1)\equiv 0 \pmod{3}$$
 To see the 'if'-part of the statement, one should notice that the left-hand-side of the second equation is always even and a multiple of $3$ under the given assumptions and hence a multiple of 6. 
 Now, since $(r-1)(r-2) \equiv 2 \pmod{3}$ and $(K+1)(2K+1) \equiv 1 \pmod{3}$ and 2 is self-inverse modulo 3, the above is congruent to
  $$\left(m_2^{-1}m_1-n_2^{-1}n_1 \right) + \left(a_1'n_2^{-1}-a_1m_2^{-1}\right)\equiv 0 \pmod{3}$$
which is
  $$m_2^{-1}\left(m_1-a_1\right) + n_2^{-1}\left(a_1'-n_1 \right) \equiv 0 \pmod{3} $$
And since $a_1' \equiv n_1 \pmod{3}$ and $a_1 \equiv m_1 \pmod{3}$ this is true independent of the choice of weights, and hence this case is trivial.

For the reverse implication assume now that \eqref{Eq:M0Inv} holds.
By appealing to \eqref{EQ:KJ}, there is an integer $s$ such that
$$K(x_0'-x_0) = sr + KJ \frac{r}2 \iff x_0'-x_0= s\frac{r}{K}+ J\frac{r}{2}$$
 We will make use of \cite[Proposition 2.14]{errs}. Hence we need to show that we can transform
$$
\begin{psmallmatrix}
 0  &\frac{r}{K} & y_0 & x_0
\\   0  & 0 & r & \frac{r(r+1)}{2}
\\   0 & 0 & 0 & r
\\   0 & 0 & 0 & 0
\end{psmallmatrix}
$$
into
$$
\begin{psmallmatrix}
 0 &\frac{r}{K} & y_0 & x_0'
\\   0  &  0 & r & \frac{r(r+1)}{2}
\\   0  & 0 & 0 & r
\\   0 & 0 & 0 & 0
\end{psmallmatrix}
$$
by adding columns (rows) of either matrix to subsequent (prior) columns (rows) of the same matrix an integral number of times.
This is done by the following operations: First we add the second row to the first row $J$ times to obtain 
$$
x_0+J\frac{r(r+1)}{2}=x_0+J\frac{r}{2}+J\frac{r^2}{2}
$$
We can then transform this into $x_0'$ by adding the second column to the fourth $s-JK\frac{r}{2}$ times (recall that $J$ is even if $r$ and $K$ are both odd by the first part of the proof).
\vspace{0.2cm}
\\
(3).
Assume that the quantum lens spaces coming from the weights $\underline{m}=(m_0,m_1,m_2,m_3)$ and $\underline{n}=(n_0,n_1,n_2,n_3)$ are isomorphic. We denote by $x_0,...,x_{K-1}, y_0,...,y_{K-1},a_1,a_2$ and $x_0',...,x_{K-1}'$, $y_0',...,y_{K-1}',a_1',a_2'$ the elements $x_i$ and $y_i$ coming from Lemma \ref{Lemma:m0} corresponding to $\underline{m}$ and $\underline{n}$ respectively. See Notation \ref{notation:ai} for the definition of $a_i$. Then by Corollary \ref{lensclassification} we obtain the following equations
\begin{equation}\label{matrixequa1}
    y_j'\equiv y_j+u_{23}\frac{r}{K}+rv_{3,j+4}, \ \ \ \
x_j'\equiv x_j+u_{12}y_j+v_{3,j+4}\frac{r(r+1)}{2}+u_{13}\frac{r}{K} \Mod{r},
\end{equation}
for $j=0,1,...,K-1$ where $u_{\ell m}, v_{\ell m}\in\mathbb{Z}$ are the entries of the matrices $U$ and $V$ from the $SL_{\mathcal{P}}$-equivalence.

Note that since $y_0=y_0'$, $K$ divides $u_{23}$. Then by (\ref{matrixequa1})
\[
a_2'\frac{r}{K}\equiv a_2\frac{r}{K}+u_{23}\frac{r}{K} \Mod{r},
\]
hence there exists a $k\in\Z$ such that 
\[
(a_2'-a_2)\frac{r}{K}=u_{23}\frac{r}{K}+kr.
\]
Then $\frac{a_2'-a_2}{K}\in \Z$ and $a_2'=a_2$ which implies that $m_2\equiv n_2 \Mod{K}$. 

We now consider the sum of all the $x_i's$. We have 
\[
\begin{aligned}
x_0+x_1+...+x_{K-1}&\equiv -m_2^{-1}m_1\frac{r(r-2)(r-1)}{3}+ n\sum_{h=0}^{K-1}\sum_{\mathclap{{\substack{\ell=1 \\\ell\equiv m_2a_1h-1\pmod{K}}}}}^{r-2}\frac{\ell h}{K} \\ &- \sum_{t=1}^{K-1} \sum_{h=s_t+1}^{K-1}\sum_{\mathclap{{\substack{\ell=1 \\\ell\equiv m_2a_1t-1\pmod{K}}}}}^{r-2}\ell +\sum_{t=1}^{K-1}\frac{r}{K}\left(a_1t+a_2t) \right)\pmod{r} \\
&\equiv 
-m_2^{-1}m_1\frac{r(r-2)(r-1)}{3}+\sum_{h=0}^{K-1}\sum_{\mathclap{{\substack{\ell=1 \\\ell\equiv m_2a_1h-1\pmod{K}}}}}^{r-2}\ell h \\ &- \sum_{t=1}^{K-1} \sum_{h=s_t+1}^{K-1}\sum_{\mathclap{{\substack{\ell=1 \\\ell\equiv m_2a_1t-1\pmod{K}}}}}^{r-2}\ell 
+ \frac{r(K-1)}{2}(a_1+a_2) \pmod{r}.
\end{aligned}
\]
The last term is always congruent to $0$ modulo $r$, indeed if $K$ is odd we are done, if $K$ is even then $a_1+a_2$ is even. 

Since each $s_t$ corresponds uniquely to a number between $0$ and $K-1$, we may reiterate the penultimate sum accordingly:
\[
\begin{aligned}
\mathop{\sum_{t=1}^{K-1} \sum_{h=s_t+1}^{K-1}\sum_{\ell=1}^{r-2}}_{\ell\equiv m_2a_1h-1\Mod{K}}\ell = \mathop{\sum_{t=1}^{K-1}
\sum_{h=t}^{K-1}\sum_{\ell=1}^{r-2}}_{\ell\equiv m_2a_1h-1\Mod{K}}\ell  
=\sum_{h=1}^{K-1}\sum_{\mathclap{{\substack{\ell=1 \\\ell\equiv m_2a_1h-1\Mod{K}}}}}^{r-2}h\ell. 
\end{aligned}
\]
Hence 
\[
x_0+x_1+...+x_{K-1}\equiv -m_2^{-1}m_1\frac{r(r-2)(r-1)}{3} \Mod{r}.
\]
Using (\ref{matrixequa1}) and the fact that $(a_2-1)\frac{r(K-1)}{2}\equiv 0 \Mod{r}$, we have 
\vspace{0.2cm}
\\
\resizebox{\columnwidth}{!}{$
\begin{aligned}
&(x_0'+x_1'+...+x_{K-1}')-(x_0+x_1+...+x_{K-1})\equiv u_{12}(y_0+y_1+...+y_{K-1})\\ &+Ku_{13}\frac{r}{K} + (v_{34}+v_{35}+...+v_{3,K+3})\frac{r(r+1)}{2} \Mod{r} \\
&\equiv u_{12}\left(\frac{r(r+K)}{2K}+ (K-1)\frac{r(r-K)}{2K}+a_2\frac{r}{K}\sum_{t=1}^{K-1} t\right)+(v_{34}+v_{35}+...+v_{3,K+3})\frac{r(r+1)}{2} \Mod{r}  \\
&\equiv
u_{12}\frac{r(r+1)}{2}+(v_{34}+v_{35}+...+v_{3,K+3})\frac{r(r+1)}{2} \Mod{r} \\
&\equiv
(v_{34}+v_{35}+...+v_{3,K+3})\frac{r(r+1)}{2}\Mod{r} \equiv 0 \Mod{r}.
\end{aligned}
$} 
\vspace{0.2cm}
\\
The last congruence follows by \cite[the proof of Theorem 7.8]{errs}. Hence
\[
\left(m_2^{-1}m_1-n_2^{-1}n_1\right)\frac{r(r-1)(r-2)}{3}\equiv 0 \Mod{r}. \]
It remains to be shown that $m_1\equiv n_1 \Mod{K}$ i.e. $a_1=a_1'$. 
For $h=s_t+1,...,K-1$ let $p_h\in\Z$ be such that $0<m_2a_1h-1+Kp_h<K$. First we need to expand the following sum: 
\vspace{0.2cm}
\\
\resizebox{\columnwidth}{!}{
$
\begin{aligned}
&\mathop{\sum_{h=s_t+1}^{K-1}\sum_{\ell=1}^{r-2}}_{\ell\equiv m_2a_1h-1\Mod{K}}\ell 
= \sum_{h=s_t+1}^{K-1}\sum_{k=0}^{\frac{r-K}{K}}\left( m_2a_1h-1+Kp_h+Kk\right) = \sum_{h=s_t+1}^{K-1}\left(\frac{r}{K}(m_2a_1h-1)+rp_h+\frac{r(r-K)}{2K} \right) \\
&\equiv \frac{r}{K}m_2a_1\left(\frac{K(K-1)}{2}+\frac{a_2t(1-a_2t)}{2}\right)+\frac{r}{K}(a_2t-K)+(K-a_2t)\frac{r(r-K)}{2K} \Mod{r}.
\end{aligned}
$}
\vspace{0.2cm}
\\
We will now find an expression for $(x_0'-x_t')-(x_0-x_t)$ and then consider the expressions for $t=1$ and $t=K-1$. From the above we have
\vspace{0.2cm}
\\
\resizebox{\columnwidth}{!}{
$
\begin{aligned}
(x_0'-x_t')-(x_0-x_t)&\equiv \mathop{\sum_{h=s_t+1}^{K-1}\sum_{\ell=1}^{r-2}}_{\ell\equiv m_2a_1'h-1\Mod{K}}\ell -\mathop{\sum_{h=s_t+1}^{K-1}\sum_{\ell=1}^{r-2}}_{\ell\equiv m_2a_1h-1\Mod{K}}\ell + \frac{r}{K}(a_1-a_1')t \Mod{r} \\
&\equiv \frac{r}{K}\left(\frac{K(K-1)}{2}-\frac{a_2t(a_2t-1)}{2}\right)m_2(a_1'-a_1)+ \frac{r}{K}(a_1-a_1')t \pmod{r}.
\end{aligned}
$}
\vspace{0.2cm}
\\
On the other hand by (\ref{matrixequa1}) we have 
\[
\begin{aligned}
(x_0'-x_t')-(x_0-x_t)&\equiv u_{12}(y_0-y_t)+(v_{34}-v_{3,t+4})\frac{r(r+1)}{2} \pmod{r} \\
&\equiv -u_{12}a_2t\frac{r}{K} \pmod{r},
\end{aligned}
\]
which follows since $v_{34}=-\frac{u_{23}}{K}$. Indeed, we have $y_0'=y_0$ and 
\[
\frac{r(r-K)}{2K}+a_2't\frac{r}{K}=y_t'=rv_{3,t+4}+y_t+u_{23}\frac{r}{K}=rv_{3,t+4}+\frac{r(r-K)}{2K}+a_2t\frac{r}{K}+u_{23}\frac{r}{K}
\]
hence $rv_{3,t+4}+u_{23}\frac{r}{K}=(a_2'-a_2)t\frac{r}{K}=0$ and $v_{3,t+4}=-\frac{u_{23}}{K}$.
\\

Combining the two expressions for $(x_0'-x_t')-(x_0-x_t)$ we get 
\[
\frac{r}{K}\left(\frac{K(K-1)}{2}-\frac{a_2t(a_2t-1)}{2}\right)m_2(a_1'-a_1)+ \frac{r}{K}(a_1-a_1')t+u_{12}a_2t\frac{r}{K}\equiv 0 \Mod{r}.
\]
Note that 
\[
\frac{r}{K}\left(\frac{K(K-1)}{2}\right)m_2(a_1-a_1')=r\left(\frac{K-1}{2}\right)m_2(a_1-a_1')\equiv 0 \mod r,
\]
since if $K$ is even then $2$ divides $a_1-a_1'$ and if $K$ is odd $2$ divides $K-1$. Thus we have
\begin{equation}\label{x1xt}
\left(\frac{r}{K}\frac{a_2t(a_2t-1)}{2}m_2+t\right)(a_1-a_1')+u_{12}a_2t\frac{r}{K}\equiv 0 \pmod{r}.
\end{equation}
By taking the sum of the expressions in (\ref{x1xt}) where we choose $t=1$ and $t=K-1$, we arrive at 
\[
\frac{r}{K}a_2^2m_2(a_1-a_1')\equiv 0 \pmod{r}.
\]
Then $\frac{r}{K}a_2^2m_2(a_1-a_1')=rk$ for some $k\in\mathbb{Z}$ hence $K$ divides $a_2^2m_2(a_1-a_1')$. Since $a_2$ and $m_2$ are both relatively prime to $K$, we conclude that $a_1=a_1'$. 

For the other direction we will again make use of \cite[Proposition 2.14]{errs} a number of times. Assume $m_i\equiv n_i \pmod{K}$, $i=1,2$ and $\left(m_2^{-1}m_1-n_2^{-1}n_1\right)\frac{r(r-1)(r-2)}{3} \pmod{r}  \equiv 0 \pmod{r}$, then the entries, $y_t$, in the second row of the adjacency matrices are identical, and, it suffices to show for each $t$,
\[
\begin{aligned}
x_t'-x_t\equiv\left(m_2^{-1}m_1-n_2^{-1}n_1\right)\frac{r(r-1)(r-2)}{3K} +\sum_{\ell=1}^{r-2}\ell\frac{r}{K}(k_\ell'-k_\ell) \pmod{r}
\end{aligned}
\]
is an integer multiple of $\tfrac{r}{K}$. For the second term this is obvious. Additionally, it is obvious that this is also true for the first term, whenever $r$ is not a multiple of $3$. If $3\vert r$, then one finds that $3\vert m_2^{-1}m_1-n_2^{-1}n_1$, and the claim follows.

The adjacency matrices of each set of weights will then be identical after adding the third row to the first, and the first column to the $t$'th column in each an appropriate number of times.
\\
\\
(2). Proceeding as in part (3), assume that the quantum lens spaces coming from the weights $\underline{m}=(m_0,m_1,m_2,m_3)$ and $\underline{n}=(n_0,n_1,n_2,n_3)$ are isomorphic. We denote $x, a_1$ and $x',a_1'$ the elements coming from the adjacency matrix as written in Lemma \ref{lemmam1} corresponding to $\underline{m}$ and $\underline{n}$ respectively. See Notation \ref{notation:ai} for the definition of $a_1$. In a similar manner, it follows from Corollary \ref{lensclassification}, and a computation that
\[\frac{a_1'r}{K}\equiv \frac{a_1r}{K} \pmod{r},\]
It follows that $m_1\equiv n_1 \pmod{K}$. Moreover, we obtain from the adjacency matrices that
\vspace{0.2cm}
\\
\resizebox{\columnwidth}{!}{
$x'-x\equiv \left( n_2^{-1}n_1-m_2^{-1}m_1 \right)\frac{r(2r-K)(r-K)}{6K^2}+ \left( n_1^{-1}-m_1^{-1} \right)\frac{r(r-K)(K-1)}{4K} \pmod{r},$}
\vspace{0.2cm}
\\
and using \cite[Theorem 7.1]{errs}, we obtain $k_0,k_1\in \Zb$ such that
\[x'-x \equiv \frac{r(r+K)}{2K}k_0 +\frac{r}{K}k_1 \pmod{r}.\]
Consequently,
\vspace{0.2cm}
\\
\resizebox{\columnwidth}{!}{
$\left( n_2^{-1}n_1-m_2^{-1}m_1 \right)\frac{r(2r-K)(r-K)}{6K^2}+ \left( n_1^{-1}-m_1^{-1} \right)\frac{r(r-K)(K-1)}{4K}\equiv \frac{r(r+K)}{2K}k_0 +\frac{r}{K}k_1 \Mod{r}.$}
\vspace{0.1cm}
\\
Multiplying both sides by $2K$ yields
\[\left( n_2^{-1}n_1-m_2^{-1}m_1 \right)\frac{r(2r-K)(r-K)}{3K}+ \left( n_1^{-1}-m_1^{-1} \right)\frac{r(r-K)(K-1)}{2}\equiv 0 \Mod{r}.\]
Now, it is easy to check that the second term is always congruent to zero$\Mod{r}$, so we conclude that \[\left( n_2^{-1}n_1-m_2^{-1}m_1 \right)\frac{r(2r-K)(r-K)}{3K}\equiv 0 \pmod{r}.\]

Conversely, assuming that  
\begin{equation}\label{Eq:Main3}
    \left( n_2^{-1}n_1-m_2^{-1}m_1 \right)\frac{r(2r-K)(r-K)}{3K}=t \cdot r,
\end{equation}
we may argue as in the previous part, by showing that $x-x'$ is an integer multiple of $\frac{r}{K}$. It follows by a computation that
\[x-x'=\frac{r}{K}\left(\frac{2t+(m_1^{-1}-n_1^{-1})(r-K)(K-1)}{4} \right).\] Since $(m_1^{-1}-n_1^{-1})(r-K)(K-1)$ is necessarily divisible by 4 (to see this, one can consider the different parities of $K$ and $r$), it suffices to show that $t$ is even, which one can conclude by considering \eqref{Eq:Main3}

It remains to be shown that 
$\left( n_2^{-1}n_1-m_2^{-1}m_1 \right)\frac{r(2r-K)(r-K)}{3K} \equiv 0 \pmod{r}$
if and only if $\left( n_2^{-1}n_1-m_2^{-1}m_1 \right)\frac{r(r-1)(r-2)}{3} \equiv 0 \pmod{r}$. It is routine to show that if $3$ does not divide $r$, then $3$ divides either $2r-K$ or $r-K$, and the claim follows immediately. If $\left(n_2^{-1}n_1-m_2^{-1}m_1 \right)\frac{r(r-1)(r-2)}{3} \equiv 0 \pmod{r}$ and $3\vert r$, then $3\vert\left( n_2^{-1}n_1-m_2^{-1}m_1 \right)$ and one direction follows. The converse follows, remarking that $3\vert\left( n_2^{-1}n_1-m_2^{-1}m_1 \right)$ is always true if $\left( n_2^{-1}n_1-m_2^{-1}m_1 \right)\frac{r(2r-K)(r-K)}{3K} \equiv 0 \pmod{r}$.


The proof of (1) is identical to that of (2) remarking that the adjacency matrix corresponding to the system of weights $\underline{m}\coloneqq (m_0,m_1,m_2,m_3)$ with $\gcd(m_1,r)=K$ and $\gcd(m_i,r)=1$ if $i\neq 2$ is the anti-transpose of the adjacency matrix corresponding to the system $\underline{m}'\coloneqq(m_0,m_2,m_1,m_3)$. By \cite[Definition 1.7]{GS07}, the adjacency matrices are related by the identity
\[A_{\overline{L}_q^{7}(r;\underline{m})}= JA_{\overline{L}_q^{7}(r;\underline{m}')}^TJ,\]
where $J$ is the involutory matrix whose entries are $1$ on the second diagonal and $0$ elsewhere.
\end{proof}

\begin{remark}
In this paper we have only dealt with the case there a single weight is coprime to the order of the acting group, $r$. There is however no clear reason why a similar result should be unobtainable in a more general setting. In particular, if we consider a list of weights $(m_0,m_1,m_2,m_3)$, then the methods for computing the adjacency matrices of the corresponding graph, could very likely be identical or similar, albeit more tedious, to the ones used above if at least one of $m_1$ or $m_2$ is coprime to $r$. If both weights are coprime, it is likely that an entirely different approach to counting is necessary since all methods employed so far have required one of them to be a unit of $\Zb_r$.
\end{remark}

\end{document}